\newtheorem{Theorem}{Theorem}[section]
\newtheorem{Lemma}[Theorem]{Lemma}
\newtheorem{Corollary}[Theorem]{Corollary}
\newtheorem{Proposition}[Theorem]{Proposition}
\theoremstyle{definition}
\newtheorem{Definition}[Theorem]{Definition}
\newtheorem*{DefinitionNoNumber}{Definition}
\newtheorem{Remark}[Theorem]{Remark}
\numberwithin{equation}{section}
\newcommand{\beqn}{\begin{equation}}
\newcommand{\eeqn}{\end{equation}}
\def\ep{{\epsilon}}
\def\nn{\nonumber}
\def\pa{\partial}
\newcommand{\Zl}{Z_{\lambda}}
\newcommand{\Omb}{{\overline{\Omega}}}
\def\cleq{\lesssim}
\def\cgeq{\gtrsim}
\newcommand {\R} {\mathbb{R}}
\newcommand {\p} {\partial}
\newcommand{\tre}{\textcolor{red}}
\def\vt{{\tilde{v}}}
\def\ft{{\tilde{f}}}
\def\pt{{\tilde{p}}}
\def\qt{{\tilde{q}}}
\def\rhob{{\bar{\rho}}}
\def\pb{{\bar{p}}}
\def\Bb{{\bar{B}}}
\newcommand{\mR}{\mathbb{R}}                    
\newcommand{\mC}{\mathbb{C}}                    
\newcommand{\abs}[1]{\lvert #1 \rvert}          
\newcommand{\norm}[1]{\lVert #1 \rVert}         
\newcommand{\ol}[1]{\overline{#1}}
\newcommand{\im}{\mathrm{Im}}
\newcommand{\ehat}{\,\widehat{\rule{0pt}{6pt}}\,}
\newcommand{\etilde}{\,\tilde{\rule{0pt}{6pt}}\,}
\newcommand{\ebreve}{\,\breve{\rule{0pt}{6pt}}\,}
\newcommand{\eps}{\varepsilon}
\newcommand{\supp}{\mathrm{supp}}
\newcommand{\even}{\mathrm{even}}
\newcommand{\odd}{\mathrm{odd}}
\newcounter{sidenote}
\begin{document}

\title[Fixed angle inverse scattering]{Fixed angle inverse scattering for almost symmetric or controlled perturbations}

\author{Rakesh}
\address{Department of Mathematical Sciences, University of Delaware, Newark, DE 19716, USA}
\email{rakesh@udel.edu}

\author[M. Salo]{Mikko Salo}
\address{University of Jyvaskyla, Department of Mathematics and Statistics, PO Box 35, 40014 University of Jyvaskyla, Finland}
\email{mikko.j.salo@jyu.fi}




\begin{abstract}
We consider the fixed angle inverse scattering problem and show that a compactly supported potential is uniquely determined by its 
scattering amplitude for two opposite fixed angles. We also show that almost symmetric or horizontally controlled potentials are uniquely 
determined by their fixed angle scattering data. This is done by establishing an equivalence between the frequency domain and
the time domain formulations of the problem, and by solving the time domain problem by extending the methods of 
\cite{RakeshSalo} which adapts the ideas introduced in \cite{BukhgeimKlibanov} and \cite{IY01}
on the use of Carleman estimates for inverse problems.
\end{abstract}

\maketitle

\section{Introduction} \label{sec_introduction}

In inverse scattering problems the objective is to determine certain properties of a scatterer from measurements that are made far away. In stationary scattering theory, the measurements are often formulated in terms of the \emph{scattering amplitude}. If $\lambda > 0$ is a frequency and if $\omega \in S^{n-1}$, consider the plane wave $\psi^i(x) = e^{i\lambda \omega \cdot x}$ propagating in direction $\omega$. The interaction of this plane wave with a real valued scattering potential $q \in C^{\infty}_c(\mR^n)$ is described by the outgoing eigenfunction (or distorted plane wave) $\psi_q = \psi^i + \psi^s_q$, which solves the Schr\"odinger equation 
\[
(-\Delta + q - \lambda^2) \psi_q = 0 \text{ in $\mR^n$}
\]
and where the scattered wave $\psi^s_q$ is \emph{outgoing}. There are several equivalent ways to describe the outgoing condition (or Sommerfeld radiation condition), but for us it is enough that $\psi^s_q$ is given by the outgoing resolvent applied to the compactly supported function $-q \psi^i$:
\[
\psi^s_q = (-\Delta+q-(\lambda+i0)^2)^{-1}(-q \psi^i).
\]
Writing $x = r\theta$ where $r \geq 0$ and $\theta \in S^{n-1}$, the scattered wave has the asymptotics 
\[
\psi^s_q(r\theta) = e^{i\lambda r} r^{-\frac{n-1}{2}} a_q(\lambda, \theta, \omega) + o(r^{-\frac{n-1}{2}}) \qquad \text{as $r \to \infty$}.
\]
The function $a_q$ is called the \emph{scattering amplitude}, or \emph{far field pattern}, corresponding to the potential $q$. One could interpret $a_q(\lambda,\theta,\omega)$ as a scattering measurement for $q$ that corresponds to sending a plane wave at frequency $\lambda > 0$ propagating in the direction $\omega \in S^{n-1}$ and measuring the scattered wave in
the direction $\theta \in S^{n-1}$. See e.g.\ \cite{ColtonKress, DyatlovZworski, Melrose, Yafaev} for more details on these facts.

Next, we formulate four fundamental inverse scattering problems, related to recovering a potential from (partial) knowledge of its quantum mechanical scattering amplitude: \\
\begin{enumerate}
\item[1.] {\bf Full data.} Recover $q$ from $a_q$.
\item[2.] {\bf Fixed frequency.} Recover $q$ from $a_q(\lambda_0,\,\cdot\,,\,\cdot\,)$ with $\lambda_0 > 0$ fixed.
\item[3.] {\bf Backscattering.} Recover $q$ from $a_q(\lambda,\omega,-\omega)$ for $\lambda > 0$, $\omega \in S^{n-1}$.
\item[4.] {\bf Fixed angle.} Recover $q$ from $a_q(\,\cdot\,, \omega_0, \,\cdot\,)$ where $\omega_0 \in S^{n-1}$ is fixed. \\
\end{enumerate}
The full data problem is formally overdetermined when $n \geq 2$, since one seeks to recover a function of $n$ variables from a function of $2n-1$ variables. Similarly, the fixed frequency problem is formally overdetermined when $n \geq 3$ (it is formally determined when $n=2$). Both of these problems have been solved; we only mention that one can determine $q$ 
from the high frequency asymptotics of $a_q$ \cite{Saito} and that the fixed frequency problem is equivalent to a variant of the inverse conductivity problem of Calder\'on addressed in \cite{SylvesterUhlmann, Bukhgeim}. There have been many related works and we refer to \cite{Uhlmann_asterisque, Novikov_survey, Uhlmann2014} for references.

The backscattering and the fixed angle inverse scattering problems are formally determined in any dimension (both the unknown and the data depend on $n$ variables). The one-dimensional case is well understood \cite{Marchenko, DeiftTrubowitz}. Known results for $n \geq 2$ include uniqueness for potentials that are small or belong to a generic set \cite{EskinRalston, Stefanov_generic, MelroseUhlmann, BarceloEtAl}, recovery of main singularities \cite{GreenleafUhlmann, OlaPaivarintaSerov, Ruiz}, identification of the zero potential in fixed angle scattering \cite{BaylissLiMorawetz}, and recovery of angularly controlled potentials from backscattering data \cite{RakeshUhlmann}. See the references in \cite{RakeshUhlmann, Meronno_thesis} for further results. However, these problems remain open in general.

We establish several new results for the fixed angle inverse scattering problem, when $n \geq 2$. Our first result shows that a compactly supported potential is uniquely determined by the scattering amplitude at two opposite fixed angles.

\begin{Theorem} \label{thm_main1}
Fix $\omega \in S^{n-1}$, $n \geq 2$, and let $q_1, q_2 \in C^{\infty}_c(\mR^n)$ be real valued. If 
\begin{align*}
a_{q_1}(\lambda, \omega,\theta) = a_{q_2}(\lambda, \omega,\theta) \quad \text{ and } \quad a_{q_1}(\lambda, -\omega,\theta) = a_{q_2}(\lambda, -\omega,\theta)
\end{align*}
for all $\lambda > 0$ and $\theta \in S^{n-1}$, then $q_1 = q_2$.
\end{Theorem}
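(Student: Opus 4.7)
The plan is to translate the frequency-domain data at $\pm\omega$ into time-domain data for the wave equation, and then to apply a Carleman-estimate argument of Bukhgeim--Klibanov type, exploiting the fact that having data from two opposite directions plays the role of a symmetry assumption on $q$. First I would show, via inverse Fourier transform in $\lambda$, that the equality $a_{q_1}(\lambda,\pm\omega,\theta) = a_{q_2}(\lambda,\pm\omega,\theta)$ for all $\lambda>0$ and $\theta\in S^{n-1}$ is equivalent to the matching of time-domain scattering data for the distorted plane waves $U^q_{\pm\omega}$ solving
\[
(\p_t^2 - \Delta + q)U^q_{\pm\omega} = 0 \text{ in } \mR^n\times\mR, \qquad U^q_{\pm\omega} \sim \delta(t\mp x\cdot\omega) \text{ as } t\to-\infty.
\]
Writing $U^q_{\pm\omega} = \delta(t\mp x\cdot\omega) + u^q_{\pm\omega}(x,t)$, the regular part $u^q_{\pm\omega}$ solves a Goursat-type problem for the wave operator with source supported on the characteristic plane $\{t = \pm x\cdot\omega\}$. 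Thanks to the compact supports of $q_1,q_2$ and finite speed of propagation, the equality of scattering data yields vanishing Dirichlet and Neumann traces of $u^{q_1}_{\pm\omega} - u^{q_2}_{\pm\omega}$ on $\p B_R\times\mR$ for any ball $B_R$ containing both supports.

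Rotating so that $\omega = e_1$ and setting $f := q_1 - q_2$ and $w_\pm := u^{q_1}_{\pm e_1} - u^{q_2}_{\pm e_1}$, each $w_\pm$ satisfies $(\p_t^2 - \Delta + q_1) w_\pm = -f\,U^{q_2}_{\pm e_1}$ in $B_R\times\mR$ with vanishing Cauchy data on $\p B_R\times\mR$. Following the Bukhgeim--Klibanov strategy adapted to fixed angle scattering in \cite{RakeshSalo}, I would differentiate the equation in $t$ to produce an auxiliary function whose values on the characteristic plane $\{t = \pm x_1\}$ encode $f$, and then apply a Carleman estimate for the d'Alembertian with weight $e^{\tau\varphi}$, where $\varphi$ is a function of the form $\varphi(x,t) = x_1^2 - \beta(|x'|^2 + t^2)$ with $\beta\in(0,1)$ chosen so that $\varphi$ is pseudoconvex for $\p_t^2 - \Delta$ in the relevant region. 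The crucial feature of $\varphi$ is that it is symmetric under $x_1 \mapsto -x_1$, which is exactly what allows the data from the $+\omega$ and $-\omega$ directions to be combined into one global estimate spanning both halves $x_1 \gtrless 0$ of the ball. Sending $\tau\to\infty$ and using the vanishing boundary data from the first step then forces an inequality of the form $\|f\|_{L^2(B_R)} \lesssim \tau^{-1/2}\|f\|_{L^2(B_R)}$, yielding $f\equiv 0$ and hence $q_1 = q_2$ since $\supp q_j \subset B_R$.

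The principal obstacle lies in the construction and execution of this Carleman estimate. The weight must simultaneously be pseudoconvex for $\p_t^2 - \Delta$, symmetric in $x_1$ so that both fixed-angle data sets can be used uniformly, and compatible with the Bukhgeim--Klibanov identification of $f$ from characteristic Cauchy data, so that boundary terms on $\p B_R\times\mR$ are controlled by Step~1 and no interior terms across $\{x_1=0\}$ spoil the estimate. In \cite{RakeshSalo} a symmetry assumption on $q$ reduced matters to a one-sided problem where classical pseudoconvex weights suffice; here the two-angle hypothesis transfers the symmetry burden from $q$ to the weight itself, and making this transfer rigorous, together with the subtle bookkeeping of the source term $f\,U^{q_2}_{\pm e_1}$ (which carries a singular component along the characteristic) and the regularity of $u^q_{\pm\omega}$ up to the characteristic plane, is the main technical challenge of the argument.
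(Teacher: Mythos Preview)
Your high-level strategy matches the paper: reduce the scattering amplitude hypothesis to equality of time-domain data, then run a Bukhgeim--Klibanov/Carleman argument using both directions. But two of your concrete steps break down.

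\textbf{The frequency-to-time reduction.} You propose an inverse Fourier transform in $\lambda$. The paper devotes Section~\ref{sec_equivalence_frequency_time} to explaining why this is not straightforward: the outgoing resolvent $R_q(\lambda)$ may have poles on $i[0,r_0]$ (negative eigenvalues or a zero-energy resonance), so $\lambda\mapsto \psi_q^s(\cdot,\lambda,\omega)$ need not be regular on the real axis and the formal Fourier argument fails. The paper instead works with the Laplace transform in $\{\im(\lambda)>r_0\}$ and uses analytic continuation of $\lambda\mapsto \langle \psi_{q_1}^s-\psi_{q_2}^s,\varphi\rangle$ from $[\lambda_0,\infty)$. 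Your sketch would need this repair.

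\textbf{The Carleman weight and the geometry.} Your proposed weight $\varphi(x,t)=x_1^2-\beta(|x'|^2+t^2)$ is not pseudoconvex for $\Box$. With $p(\xi,\tau)=|\xi|^2-\tau^2$ one computes
\[
\{p,\{p,\varphi\}\}\big|_{p=0}=8(1-\beta)\xi_1^2-16\beta|\xi'|^2,
\]
which is negative whenever $\xi_1=0$, $\xi'\neq 0$; and one can simultaneously arrange $\{p,\varphi\}=0$ at interior points, so no Carleman estimate with this weight is available. More generally, any weight even in $x_1$ has $\partial_{x_1}\varphi=0$ on $\{x_1=0\}$ and tends to fail pseudoconvexity for exactly the null directions with $\xi_1=0$; transferring the symmetry ``from $q$ to the weight'' is not how the argument works. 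The paper (via \cite{RakeshSalo}) uses a \emph{non}-symmetric weight, $\psi=5(a-z)^2+5|y|^2-(t-z)^2$, and decomposes in \emph{space-time} along the characteristic $\Gamma=\{t=z\}$ into $Q_\pm=Q\cap\{\pm(t-z)>0\}$, not into spatial halves $\{x_1\gtrless0\}$. The Carleman estimate with boundary terms is applied separately in $Q_+$ and $Q_-$; when the two are added, the $\Gamma$-boundary integrals carry opposite outward normals and cancel up to $\|w_+-w_-\|_{H^1(\Gamma)}$ (see Proposition~\ref{prop_single_measurement_perturbation} and \eqref{stability_sixth_estimate_jump}--\eqref{stability_eight_estimate_jump}). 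The second incoming direction $-\omega$ is what produces $w_-$ with matching trace on $\Gamma$, so this jump vanishes. That cancellation mechanism, not a symmetric weight, is the key device you are missing.
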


As a corollary, it follows that a reflection symmetric potential is uniquely determined by its fixed angle scattering data.

\begin{Corollary} \label{thm_main2}
Fix $\omega \in S^{n-1}$ and let $q_1, q_2 \in C^{\infty}_c(\mR^n)$ be reflection symmetric in the sense that 
\[
q_j(\eta + t\omega) = q_j(\eta - t\omega), \qquad \text{for all } \eta \in \R^n \text{ with }\eta \perp \omega, \ t \in \mR, \ j=1,2.
\]
If $a_{q_1}(\lambda,\omega,\theta) = a_{q_2}(\lambda,\omega,\theta)$ for all $\lambda > 0$, $\theta \in S^{n-1}$ then $q_1 = q_2$.
\end{Corollary}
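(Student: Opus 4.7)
The plan is to reduce Corollary \ref{thm_main2} to Theorem \ref{thm_main1}: for a potential symmetric under the reflection across $\omega^\perp$, the fixed angle data with incident direction $\omega$ already determine the fixed angle data with incident direction $-\omega$.

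Let $R$ denote the reflection $Rx = x - 2(x\cdot\omega)\omega$ across $\omega^\perp$, so that the symmetry hypothesis reads $q_j \circ R = q_j$. The key step is to establish the identity
\[
a_{q_j}(\lambda,-\omega,\theta) = a_{q_j}(\lambda,\omega,R\theta), \qquad \lambda > 0,\ \theta \in S^{n-1}.
\]
To prove it, let $\psi_{q_j}$ be the distorted plane wave with incident direction $\omega$ and set $u(x) := \psi_{q_j}(Rx)$. Since $R$ is an isometry that commutes with $\Delta$ and preserves $q_j$, the function $u$ solves $(-\Delta + q_j - \lambda^2)u = 0$ in $\mR^n$. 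Using $\omega \cdot Rx = -\omega\cdot x$, I can write $u(x) = e^{-i\lambda \omega\cdot x} + \psi^s_{q_j}(Rx)$, and the pullback of an outgoing solution by an isometric reflection remains outgoing; hence by uniqueness $u$ coincides with the distorted plane wave for incident direction $-\omega$. Comparing the far-field expansion of $\psi^s_{q_j}$ along the ray $r R\theta$ with that of the distorted plane wave for $-\omega$ along $r\theta$ yields the displayed identity.

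With this identity in hand, the hypothesis $a_{q_1}(\lambda,\omega,\theta) = a_{q_2}(\lambda,\omega,\theta)$, evaluated at $R\theta$ in place of $\theta$ and combined with the identity applied to both $q_1$ and $q_2$, gives $a_{q_1}(\lambda,-\omega,\theta) = a_{q_2}(\lambda,-\omega,\theta)$ for all $\lambda > 0$ and $\theta \in S^{n-1}$. Both hypotheses of Theorem \ref{thm_main1} are therefore satisfied, and I conclude $q_1 = q_2$.

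The only nontrivial point is the symmetry identity; its proof is essentially bookkeeping about how an orthogonal reflection exchanging $\omega$ and $-\omega$ acts on incident plane waves, outgoing solutions, and far-field patterns, and it uses in an essential way that the reflection hyperplane is chosen orthogonal to the incident direction. Once that identity is established, the corollary is an immediate consequence of Theorem \ref{thm_main1}.
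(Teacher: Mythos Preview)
Your argument is correct, and it takes a somewhat different route from the paper. The paper does not give a self-contained argument for Corollary~\ref{thm_main2}: it invokes the frequency--time equivalence of Theorem~\ref{thm_frequencydomain_reduction} and then cites the time-domain result \cite[Corollary 1.3]{RakeshSalo} from the companion paper. Your approach instead stays entirely in the frequency domain and deduces the corollary directly from Theorem~\ref{thm_main1} of the present paper, by observing that for a potential invariant under the reflection $R$ across $\omega^{\perp}$ the distorted plane wave satisfies $\psi_{q_j}(Rx,\lambda,\omega)=\psi_{q_j}(x,\lambda,-\omega)$, whence the scattering amplitude obeys $a_{q_j}(\lambda,-\omega,\theta)=a_{q_j}(\lambda,\omega,R\theta)$. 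This is a clean and self-contained reduction: it avoids the external citation and makes transparent that the corollary is an immediate formal consequence of the two-direction result once one notes how the reflection acts on incident directions and far-field patterns. The paper's route, by contrast, packages the same reflection idea inside the time-domain companion result; the underlying mechanism is the same, but your presentation isolates it more explicitly. The only point worth spelling out a bit more is why the pullback by $R$ preserves the outgoing condition; this follows since $R$ is a linear isometry and the outgoing resolvent $(-\Delta+q-(\lambda+i0)^2)^{-1}$ commutes with orthogonal transformations that fix $q$.
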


We show that the above results follow directly from corresponding results for the time domain inverse problems that were studied in \cite{RakeshSalo}. In fact, in this paper we show that the time and frequency domain formulations of the fixed angle scattering problem are equivalent. When $n \geq 3$ is odd, such an equivalence has been discussed in \cite{Melrose, Uhlmann_backscattering, MelroseUhlmann_bookdraft} in the context of Lax-Phillips scattering theory. We give a direct argument that works in any dimension.

The work \cite{RakeshSalo} was concerned with wave equation inverse problems with two measurements. In this paper we also extend the methods of \cite{RakeshSalo} and obtain improved results in the case of a single measurement (i.e.\ fixed angle scattering).

The first improved result considers potentials that satisfy a generalized reflection symmetry or small perturbations of such potentials. We fix an $(n-1) \times (n-1)$ orthogonal matrix $A$, take $\omega = e_n$ and, for any $x \in \R^n$, write $x = (y,z)$ 
with $y \in \mR^{n-1}$ and $z \in \mR$. For any function $p$ on $\mR^n$, we define its generalized even and odd parts
as 
\begin{align}
p_{\even}(y,z) &:= \frac{1}{2} \left[ p(y,z) + p(Ay, -z) \right], \label{p_even_definition} \\
p_{\odd}(y,z) &:= \frac{1}{2} \left[ p(y,z) - p(Ay, -z) \right]. \label{p_odd_definition} 
\end{align}

\begin{Theorem} \label{thm_main3}
Let $M > 1$ and $\omega = e_n$. There is an $\eps = \eps(M) > 0$ with the following property: if $q, p \in C^{\infty}_c(\mR^n)$ are supported 
in $\ol{B}$ and $\norm{q}_{C^{n+4}} \leq M$, $\norm{p}_{C^{n+4}} \leq M$, then the condition
\[
a_{q+p}(\lambda,\omega,\theta) = a_{q}(\lambda,\omega,\theta) \qquad \text{for all } \lambda > 0 \text{ and } \theta \in S^{n-1}
\]
implies $p = 0$, provided
\[
\norm{p_{\odd}}_{H^1(B)} \leq \eps \norm{p}_{L^2(B)}
\]
or  
\[
\norm{p_{\even}}_{H^1(B)} \leq \eps \norm{p}_{L^2(B)}.
\]
\end{Theorem}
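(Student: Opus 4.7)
The plan is to reduce to a time-domain wave equation inverse problem and extend the Carleman estimate strategy of \cite{RakeshSalo} to the single-measurement, nearly symmetric setting.

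First, using the time--frequency equivalence established earlier in the paper, the hypothesis is equivalent to the statement that the distorted plane wave solutions $u_1, u_2$ of $(\partial_t^2 - \Delta + V_j)u_j = 0$, with $V_1 = q+p$ and $V_2 = q$ and incident plane wave $\delta(t - z)$, agree on a suitable measurement hyperplane outside $\ol{B}$ for all $t$. Writing $u_j = \delta(t - z) + v_j H(t - z)$ with $v_j$ regular and setting $w = v_1 - v_2$, one obtains a wave equation of the form $(\partial_t^2 - \Delta + q + p)w = -p\, v_2$ with vanishing Cauchy data on the measurement surface and on the initial characteristic.

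The Bukhgeim--Klibanov principle then identifies $p$ with a time derivative of $w$ evaluated on the characteristic $\{t = z\}$, so any Carleman estimate on $w$ translates into an $L^2$ bound on $p$. In \cite{RakeshSalo}, a second measurement at $-\omega$ provided the missing half of the data needed to close the argument. Here I would instead work with a Carleman weight $\phi(y,z,t)$ invariant under the involution $\sigma : (y,z,t) \mapsto (Ay, -z, t)$; under such a weight the Carleman estimate naturally controls the $\sigma$-even component of the unknown, yielding a bound of the schematic form
\[
\norm{p_{\even}}_{L^2(B)}^2 \leq C(M) \norm{p_{\odd}}_{H^1(B)}^2,
\]
the $p_{\odd}$ term appearing as an uncontrolled remainder due to the absence of the $-\omega$ measurement. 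Under the hypothesis $\norm{p_{\odd}}_{H^1(B)} \leq \eps \norm{p}_{L^2(B)}$, the $L^2$-orthogonal decomposition $\norm{p}_{L^2}^2 = \norm{p_{\even}}_{L^2}^2 + \norm{p_{\odd}}_{L^2}^2$ (which holds because $A$ is orthogonal) then gives
\[
\norm{p}_{L^2}^2 \leq (C(M) + 1)\eps^2 \norm{p}_{L^2}^2,
\]
and $p \equiv 0$ follows once $\eps = \eps(M)$ is chosen small enough. The case of small $p_{\even}$ is handled by symmetric modifications.

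I expect the main obstacle to be the construction and analysis of the Carleman weight. It must simultaneously be $\sigma$-invariant, satisfy the pseudoconvexity conditions required for a Carleman estimate for the wave operator, be compatible with the characteristic $\{t = z\}$ along which the incident plane wave propagates, and yield Carleman constants depending only on $M$ despite $q$ having no symmetry assumption. In particular, the lower-order cross-terms generated by the $\sigma$-unsymmetric part of $q$ must be absorbed using the regularity hypothesis $\norm{q}_{C^{n+4}} \leq M$, and Sobolev embeddings in dimension $n$ presumably dictate the specific $C^{n+4}$ threshold appearing in the statement.
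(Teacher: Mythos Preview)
Your overall strategy---reduce to the time domain via the frequency--time equivalence, identify $p$ with a tangential derivative of $w = u_{q+p} - u_q$ on the characteristic $\Gamma = \{t=z\}$, and run a Carleman argument in which the approximate symmetry hypothesis substitutes for the missing $-\omega$ measurement---matches the paper's approach. But the concrete mechanism you propose for exploiting the symmetry has a gap.

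You suggest working with a Carleman weight invariant under the involution $(y,z,t) \mapsto (Ay,-z,t)$. This involution does not preserve the region $Q_+ = \{t>z\}$ on which the problem lives (it sends $\{t>z\}$ to $\{t>-z\}$), nor does it fix the characteristic $\Gamma$. Moreover, for the Bukhgeim--Klibanov step the weight must be largest on $\Gamma$ so that the source $(Zw)|_\Gamma \, f$ can be absorbed; this forces a dependence of the type $-(t-z)^2$ in the exponent, which is incompatible with your symmetry. It is therefore unclear how a $\sigma$-invariant weight would by itself separate $p_{\even}$ from $p_{\odd}$, and the schematic inequality $\norm{p_{\even}}_{L^2}^2 \leq C\norm{p_{\odd}}_{H^1}^2$ is left unsubstantiated.

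The paper's mechanism is different and more explicit. One reflects the \emph{solution}, not the weight: with $w_+ := w$ on $Q_+$, define $w_-(y,z,t) := -\sigma\, w_+(Ay,-z,-t)$ on $Q_- = \{t<z\}$ (both $z$ and $t$ are flipped, so $Q_+$ is carried onto $Q_-$ and $\Gamma$ onto itself). Then $w_-$ satisfies an equation of the same structure on $Q_-$. One applies the Carleman estimate \emph{with explicit boundary terms} separately on $Q_+$ and $Q_-$, using the same (non-symmetric) pseudoconvex weight. The $\Gamma$-boundary contributions are quadratic forms in $(\sigma w_\pm, \nabla_\Gamma w_\pm)$ entering with opposite signs because the exterior normals to $Q_+$ and $Q_-$ on $\Gamma$ are opposite; adding the two estimates, these terms cancel up to an error controlled by $\norm{w_+ - w_-}_{H^1(\Gamma)}$. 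A direct computation of the jump on $\Gamma$ gives
\[
w_+(y,z,z) - w_-(y,z,z) = w_+\bigl(y,\sqrt{1-|y|^2},\sqrt{1-|y|^2}\bigr) + \int_z^\infty \breve p(y,s)\,ds,
\]
so once the lateral data vanishes one obtains $\norm{p}_{L^2} \leq C\,\norm{\breve p}_{H^1}$, and the smallness hypothesis $\norm{\breve p}_{H^1} \leq \eps \norm{p}_{L^2}$ closes the argument. The missing idea in your sketch is precisely this reflection of $w$ into $Q_-$ together with the cancellation of the $\Gamma$-boundary terms when the two Carleman estimates are summed.
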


In particular, if $q \in C^{\infty}_c(\mR^n)$ satisfies a generalized reflection symmetry in the sense that $q_{\odd} = 0$ or $q_{\even} = 0$, then $q$ is uniquely determined by its fixed angle scattering data.

The next result involves functions which are horizontally controlled, as defined next.

\begin{DefinitionNoNumber}
Given $M, \eps \geq 0$, a function $r(y,z) \in H^1(\mR^n)$, with support in $\{ \abs{y} \leq 1 \}$, is said to be horizontally $(M,\eps)$-controlled if 
\[
\int_{\mR^{n-1}} \abs{\nabla_y r(y,z)}^2 \,dy \leq M \int_{\mR^{n-1}} \abs{r(y,z)}^2 \,dy + \eps \int_{\mR^{n-1}} \abs{\partial_z r(y,z)}^2 \,dy, 
\]
for almost every $z \in (-1,1)$.
\end{DefinitionNoNumber}

\begin{Theorem} \label{thm_main4}
Let $M > 1$ and $\omega = e_n$. There is an $\eps = \eps(M) > 0$ with the following property: if $q, p \in C^{\infty}_c(\mR^n)$ are supported in 
$\ol{B}$ and $\norm{q}_{C^{n+4}} \leq M$, $\norm{p}_{C^{n+4}} \leq M$, then the condition
\[
\text{$a_{q+p}(\lambda,\omega,\theta) = a_{q}(\lambda,\omega,\theta)$ \qquad for all $\lambda > 0$ and $\theta \in S^{n-1}$}
\]
implies $p = 0$, provided the function 
\[
r(y,z) := \int_{-\infty}^z p(y,s) \,ds, \qquad (y,z) \in \R^n,
\]
is horizontally $(M,\eps)$-controlled.
\end{Theorem}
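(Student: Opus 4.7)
The plan is to reduce Theorem \ref{thm_main4} to a time-domain inverse problem via the frequency/time equivalence established earlier in the paper, and then apply the Carleman estimate machinery of \cite{RakeshSalo}, adapted to the single-measurement setting where the horizontal control hypothesis plays the role that symmetry plays in the two-measurement case.

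First I would use the equivalence to translate the scattering hypothesis into equality of progressing plane wave solutions in the time domain. Writing $x = (y,z)$ with $\omega = e_n$, and letting $u_Q = H(t-z) + v_Q$ for $Q \in \{q, q+p\}$ where $v_Q$ is supported in $\{t \geq z\}$, the difference $V := v_{q+p} - v_q$ satisfies
\begin{equation*}
(\partial_t^2 - \Delta) V + (q+p) V = - p\, u_q, \qquad V = 0 \text{ for } t < z,
\end{equation*}
together with a homogeneous lateral condition on some noncharacteristic hyperplane (say $\{z = -1\}$) outside the support of $q+p$, coming from the data. A progressing-wave computation along $t = z$ identifies $\partial_t V|_{t = z^+}(y,z) = -\tfrac{1}{2} r(y,z)$, singling out $r$ as the natural unknown for the single-measurement problem. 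Introducing
\begin{equation*}
W(y,z,t) = \int_{-\infty}^z V(y,s,t)\, ds,
\end{equation*}
integrating the equation for $V$ in $z$, and integrating the source by parts in $s$ (using $\partial_s r = p$) yields
\begin{equation*}
(\partial_t^2 - \Delta) W = - r(y,z) u_q(y,z,t) + \int_{-\infty}^z \bigl[\, r(y,s)\, \partial_s u_q - (q+p)(y,s)\, V \,\bigr]\, ds,
\end{equation*}
with $\partial_z W = V$ and $W$ vanishing on the inherited noncharacteristic boundary.

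Next I would apply a Carleman estimate of the type used in \cite{RakeshSalo} to $W$, with a weight $e^{\tau\phi}$ for a convex $\phi(z,t)$ adapted to propagation in the $e_n$ direction. Schematically the estimate takes the form
\begin{equation*}
\tau^3 \norm{e^{\tau\phi} W}_{L^2}^2 + \tau \norm{e^{\tau\phi} \nabla W}_{L^2}^2 \lesssim \norm{e^{\tau\phi} (\partial_t^2 - \Delta) W}_{L^2}^2,
\end{equation*}
with boundary terms killed by the data. Inserting the formula above and using the $C^{n+4}$ bound on $q + p$ (which, via energy estimates, controls enough derivatives of $u_q$), the dominant contribution on the right-hand side is $r u_q$; a Poincar\'e inequality in $z$, made available by $r(y,-1) = 0$, turns weighted norms of $r$ into weighted norms of $p = \partial_z r$, while the integral terms are absorbed by the $\tau^3$ gain. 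However, the commutators produced in the Carleman calculation also force horizontal derivatives $\nabla_y r$ to appear, and the weight provides no control on them. It is exactly here that the horizontal $(M,\eps)$-control is invoked: the inequality
\begin{equation*}
\norm{\nabla_y r(\cdot,z)}_{L^2_y}^2 \leq M \norm{r(\cdot,z)}_{L^2_y}^2 + \eps \norm{p(\cdot,z)}_{L^2_y}^2
\end{equation*}
allows the $\nabla_y r$ contribution to be absorbed into the $\norm{r}$ and $\norm{p}$ terms with an overall factor of order $M\tau^{-3} + \eps$. Choosing first $\tau = \tau(M)$ large, and then $\eps = \eps(M)$ small, forces $W \equiv 0$, hence $V \equiv 0$, and finally $p \equiv 0$ via the equation $-p u_q = (\partial_t^2 - \Delta) V + (q+p) V$, using that $u_q$ is nonzero near the wavefront.

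The main obstacle is this absorption step. The Carleman weight is designed to exploit propagation in the $e_n$ direction and degenerates on horizontal derivatives, so there is no automatic mechanism for handling the $\nabla_y r$ contributions that arise from the integration in $z$. In \cite{RakeshSalo} a second measurement in direction $-\omega$ provides a symmetry that kills them; here one must instead rely on the horizontal control hypothesis to make them small. Verifying that the admissible $\eps$ depends only on $M$ (and not on $p$ itself) is the technical crux, requiring careful constant-tracking through both the Carleman estimate and the auxiliary energy and regularity estimates for $u_q$.
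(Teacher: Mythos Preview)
Your high-level plan (pass to the time domain via the frequency/time equivalence, then run a Carleman argument in which the horizontal control absorbs the $\nabla_y$ terms) matches the paper's strategy. But the concrete implementation you describe diverges from the paper at the decisive step, and as written it does not close.

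The paper does \emph{not} integrate in $z$ to form an auxiliary function $W$. It works directly with $w = u_{q+p}-u_q$ on $Q_+ = \{t>z\}$ and exploits the structural identity
\[
(\Box+q)w \;=\; (Zw)|_{\Gamma}\, f, \qquad f = 2\sqrt{2}\,u_{q+p},
\]
coming from $p(y,z) = -2\sqrt{2}\,(Zw)(y,z,z)$ (your claim that $\partial_t V|_{t=z^+}=-\tfrac12 r$ is not quite right; it is the trace $w|_{t=z}$ that equals $-\tfrac12 r$, and its tangential $Z$-derivative that recovers $p$). This ``source lives on $\Gamma$'' structure is the whole point: the Carleman weight is then chosen (Lemma~\ref{lemma_carleman_weight_y_controlled}) to be \emph{independent of $y$} with $\partial_t\phi\le 0$ on $Q_+$ and $N\phi|_\Gamma>0$, $Z\phi|_\Gamma<0$. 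The first property gives $\|e^{\sigma\phi}(\Box+q)w\|_{L^2(Q_+)}\lesssim \|e^{\sigma\phi}Zw\|_{L^2(\Gamma)}$ uniformly in $\sigma$; the second makes the explicit boundary quadratic form on $\Gamma$ (formula~\eqref{carleman_boundary_terms_gamma}) positive in $(Zv,\sigma v)$, with a single adverse term $2(Z\phi)|\nabla_y v|^2$. Because $\phi$ is $y$-independent, this adverse term is exactly $\|e^{\sigma\phi}\nabla_y w\|_{L^2(\Gamma)}^2$, and \emph{this} is where horizontal control of $r(y,z)=w(y,z,z)$ is invoked---on the characteristic boundary $\Gamma$, not in the interior.

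Your route via $W=\int^z V$ loses this structure. The equation you write for $W$ has a genuine interior source $r\,u_q$ (plus nonlocal integral terms in $z$), and you propose to bound $\|e^{\tau\phi}r\,u_q\|$ by Poincar\'e and then absorb with the $\tau^3$ gain. But $r$ is not a priori dominated by $W$ or $\nabla W$ in the interior; it is the trace of $\partial_z W$ on $t=z$, so you are back to a boundary quantity, and there is no mechanism in a standard interior Carleman estimate to absorb it. Likewise, the ``integral terms'' contain $V=\partial_z W$, which the estimate controls only with a $\tau^1$ weight, not $\tau^3$. The paper avoids all of this by never introducing $W$: the source is already a bounded multiple of a $\Gamma$-trace, and the carefully designed weight (monotone in $t$, independent of $y$) converts the problem into controlling one explicit boundary term, which is exactly what the $(M,\eps)$-hypothesis does.
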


For example, the fixed angle scattering data determines uniquely any perturbation $p(y,z)$ of the form 
\[
p(y,z) = \sum_{j=1}^N p_j(z) \varphi_j(y), \qquad (y,z) \in \R^n
\]
where $\varphi_1, \ldots, \varphi_N$ are fixed linearly independent functions in $C^{\infty}_c(\mR^{n-1})$ and $p_j$ are 
arbitrary functions in $C^{\infty}_c(\mR)$ supported in a fixed interval - see Lemma \ref{lemma_p_controlled}. Theorem \ref{thm_main4} is analogous to the result for angularly controlled potentials in backscattering \cite{RakeshUhlmann} or the result in \cite{RomanovAnalytic} for potentials which are analytic in $y$ (see also \cite{SacksSymes}).
 
We prove the above theorems by reducing them (see Section \ref{sec_equivalence_frequency_time}) to certain inverse problems for the wave equation in the time 
 domain. These time domain problems are solved by extending the methods of \cite{RakeshSalo} which adapted the ideas 
 introduced
 in \cite{BukhgeimKlibanov} and \cite{IY01} on the use of Carleman estimates for formally determined inverse problems.
Please refer to  \cite{Kh89, Bu00, Be04, Is06, Kl13, SU13, BY17} for further details about this method and its variants.

More specifically, our proofs will proceed as follows:
\begin{enumerate}
\item[1.]
First, the time domain fixed angle scattering problem is 
 reduced to an inverse source problem for the wave equation. If the source were zero, this would be a standard unique continuation problem which could be solved using a Carleman estimate. Here the source is nonzero but it has a special form: the unknown part of the source is time-independent and related to the trace of the solution on a certain characteristic boundary.
\item[2.]
We then invoke a Carleman estimate  for the wave equation with boundary terms
which estimates the solution in terms of the source and the boundary terms. Because of Step 1, the source can be estimated 
by the trace of the solution on the characteristic part of the boundary. If the Carleman weight is pseudoconvex and decays rapidly away from the characteristic boundary, then it just remains to control the characteristic boundary terms.
\item[3.]
If the Carleman weight has the properties in Step 2, then the characteristic boundary term will have an adverse sign. We deal with the adverse sign term either by using a reflection argument, leading to Theorem \ref{thm_main3}, or by assuming that 
 the adverse sign term is controlled by other boundary terms, leading to Theorem \ref{thm_main4}.
\end{enumerate}

We emphasize that this method leads to uniqueness and Lipschitz stability results for the time domain inverse problems - see Theorems \ref{thm_single_measurement_perturbation1} and \ref{thm_single_measurement_perturbation2} for precise statements. Uniqueness in the frequency domain fixed angle problem then follows from the reduction in Section \ref{sec_equivalence_frequency_time} (stability does not follow immediately, since the reduction involves analytic continuation). In our earlier work \cite{RakeshSalo}, an extension argument and a Carleman estimate in the extended domain were used for proving an analogue of Theorem \ref{thm_single_measurement_perturbation1}. A similar extension argument could be used to prove 
Theorem \ref{thm_single_measurement_perturbation1}. However, in this paper, instead we use a Carleman estimate with explicit boundary terms, which turns out to be simpler and contains more information than the extension method. This new method also makes it possible to prove Theorem \ref{thm_main4}.

This work is organized as follows. Section \ref{sec_introduction} is the introduction, Section \ref{sec_time_domain_setting} introduces the time domain setting for the fixed angle scattering problem and contains some useful facts from \cite{RakeshSalo} and Sections \ref{sec_almost_reflection_symmetric} and \ref{sec_horizontally_controlled} contain the proofs of Theorems \ref{thm_single_measurement_perturbation1} and \ref{thm_single_measurement_perturbation2} respectively. In Section \ref{sec_equivalence_frequency_time}, we prove the equivalence of time and frequency domain scattering measurements 
which leads to Theorems \ref{thm_main1} to \ref{thm_main4}. Finally, Appendix \ref{sec_carleman_estimate} contains the derivation of a Carleman estimate with boundary terms for the wave equation with a pseudoconvex weight. This is well known except for the explicit form of the boundary terms, which is needed in our proofs; hence we give a detailed argument.

\subsection*{Acknowledgements}

Rakesh's work was supported by funds from an NSF grant DMS 1615616. M Salo's work was supported by the Academy of Finland (grants 284715 and 309963) and by the European Research Council under Horizon 2020 (ERC CoG 770924).

\section{The time domain setting} \label{sec_time_domain_setting}

In this section we recall, from \cite{RakeshSalo}, some notation and basic facts for the time domain inverse problem.
The open unit ball in $\R^n$ is denoted by $B$ and $S$ is its boundary, $ \Box = \pa_t^2 - \Delta_x$ is the wave operator and 
$q(x)$ is a smooth function on $\R^n$ with support in $\ol{B}$.
The vector $e_n=(0,0, \cdots,1)$, parallel to the $z$-axis, is the
{\bf fixed} direction of the incoming plane wave and given $x \in \R^n$, we write $x=(y,z)$ with $y \in \R^{n-1}, z \in \R$.

Let $U_q(x,t) = U_q(x,t,e_n)$ be the solution of the IVP 
\[
(\Box + q)U_q = 0 \text{ in $\mR^{n+1}$}, \qquad U_q|_{\{t<-1\}} = \delta(t-z).
\]
We can express $U_q$ in the form $U_q(x,t) = \delta(t-z) + u_q(x,t)$ where $u_q(x,t) = u_q(x,t,e_n)$ is the unique 
solution of the IVP 
\begin{equation} \label{uq_forcing_problem}
(\Box + q)u_q = -q(x) \delta(t-z) \text{ in $\mR^{n+1}$}, \qquad u_q|_{\{t < -1\}} = 0.
\end{equation}
This solution has the following properties (see \cite[Proposition 1.1]{RakeshSalo}).

\begin{Proposition} \label{prop_uq_basic}
There is a unique distributional solution $u_q$ of \eqref{uq_forcing_problem}. The distribution $u_q(x,t)$ is supported in 
$\{ t \geq z \}$
and has a unique representation as a smooth function on $\{t \geq z\}$ which is also the unique smooth solution of 
 the characteristic initial value problem 
\begin{align*}
(\Box + q) u_q &= 0 \text{ in $\{ t > z \}$}, \\
u_q(y,z,z) &= -\frac{1}{2} \int_{-\infty}^z q(y,s) \,ds \quad \text{for all } ~ (y,z) \in \R^n, \\
u_q(x,t) &= 0 \text{ in $\{ z < t < -1 \}$}.
\end{align*}
For any $M>0, T > 1$ there is a $C = C(M,T) > 0$ such that if $\norm{q}_{C^{n+4}} \leq M$ then
\[
\norm{u_q}_{L^{\infty}(\{ z \leq t \leq T \} )} \leq C.
\]
\end{Proposition}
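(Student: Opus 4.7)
The plan is to reduce \eqref{uq_forcing_problem} to an equivalent characteristic (Goursat-type) initial value problem with smooth data, and then appeal to standard hyperbolic well-posedness. Since the forcing $-q(x)\delta(t-z)$ is a simple layer supported on the characteristic hyperplane $\{t=z\}$ and $u_q$ must vanish for $t<-1$, I expect the solution to be a jump-type distribution $u_q = H(t-z) v(x,t)$, where $H$ is the Heaviside function and $v$ is smooth on $\{t \ge z\}$ and vanishes in $\{z \le t < -1\}$.

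Plugging this ansatz into $\Box + q$ and expanding via the Leibniz rule, all $\delta'(t-z)$ contributions cancel because $t=z$ is characteristic for $\Box$ (explicitly, $(\partial_t\phi)^2 - |\nabla_x\phi|^2 = 0$ for $\phi=t-z$), leaving
\[
(\Box + q)[H(t-z)v] \;=\; 2\delta(t-z)\bigl[(\partial_t+\partial_z)v\bigr]\big|_{t=z} + H(t-z)(\Box+q)v.
\]
Matching with $-q(x)\delta(t-z)$ forces $(\Box+q)v = 0$ in $\{t>z\}$ together with the transport relation $(\partial_t+\partial_z)v|_{t=z} = -q/2$. Writing $V(y,z) := v(y,z,z)$, the chain rule identifies $(\partial_t+\partial_z)v|_{t=z}$ with $\partial_z V$; integrating in $z$ from $-\infty$ and using $V \equiv 0$ for $z<-1$ (inherited from $v \equiv 0$ for $t<-1$) gives the asserted boundary value $V(y,z) = -\tfrac12\int_{-\infty}^z q(y,s)\,ds$, and the vanishing of $u_q$ in $\{z<t<-1\}$ is automatic from the ansatz.

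It remains to establish existence, uniqueness, and smoothness for the resulting characteristic IVP, and to obtain the $L^\infty$ bound. I would reformulate this as a mixed Cauchy--Goursat problem on the wedge $W = \{t>z,\ t>-1\}$: the characteristic piece $\{t=z,\,t>-1\}$ of $\partial W$ carries the datum $V$, while the spacelike piece $\{t=-1,\,z\le -1\}$ carries zero Cauchy data. The compatibility condition $V(y,-1)=0$ at the edge $\{t=-1,\,z=-1\}$ holds because $q$ vanishes for $z<-1$. Well-posedness in $C^\infty$ for such a mixed problem (or, equivalently, for the Goursat problem in null coordinates $\xi = t+z,\ \tau = t-z$, in which $\Box = 4\partial_\tau\partial_\xi - \Delta_y$) is classical and can be produced by Picard iteration on the integral equation obtained by integrating along bicharacteristics. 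A Gr\"onwall-type energy argument, combined with $\|V\|_{L^\infty} \le C\|q\|_{L^\infty} \le CM$, then gives the uniform bound $\|u_q\|_{L^\infty(\{z\le t\le T\})} \le C(M,T)$. Uniqueness of $u_q$ at the distributional level follows from standard finite-speed-of-propagation uniqueness for $(\Box+q)w=0$ with $w \equiv 0$ for $t<-1$.

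The main technical hurdle is the Goursat well-posedness: Cauchy data on a single characteristic surface is underdetermined on its own, and the vanishing condition in $\{z<t<-1\}$ (equivalently, zero Cauchy data on the spacelike part of $\partial W$) is exactly what is needed to single out the physical solution. Once this reformulation is in place, the remainder of the proof is a routine application of classical hyperbolic estimates.
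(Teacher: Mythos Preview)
The paper does not prove this proposition here; it simply quotes \cite[Proposition 1.1]{RakeshSalo}. Your sketch is the standard argument and is essentially what appears in \cite{RakeshSalo}: posit $u_q = H(t-z)v$, use that $\{t=z\}$ is characteristic so the $\delta'$ contributions cancel, read off the transport equation $(\partial_t+\partial_z)v|_{t=z}=-q/2$ and the homogeneous equation in $\{t>z\}$, and then solve the resulting Goursat problem. Your computation of $(\Box+q)[H(t-z)v]$ and the identification $\partial_z V = (\partial_t+\partial_z)v|_{t=z}$ are correct.

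Two places deserve a little more precision than you give. First, for distributional uniqueness you should not merely cite finite speed of propagation but rather uniqueness for the forward Cauchy problem for $(\Box+q)w=0$ with zero data at $t=-1$ (e.g.\ \cite[Theorem 23.2.7]{Hormander}, as the paper does later); this is straightforward but should be stated. Second, the $L^\infty$ bound is not a direct Gr\"onwall in $L^\infty$; the role of the hypothesis $\|q\|_{C^{n+4}}\le M$ is to run Sobolev-space energy estimates for the Goursat problem up to an order exceeding $(n+1)/2$ and then invoke Sobolev embedding. Your phrase ``Gr\"onwall-type energy argument'' hides this, and as written it does not explain why $C^{n+4}$ regularity of $q$ enters. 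With these two clarifications, your outline matches the argument in \cite{RakeshSalo}.
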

Below, we regard the distribution $u_q(x,t)$ as a function on $\R^{n+1}$ which is zero on $\{ t < z\}$ and
is a smooth function on $\{ t \geq z \}$.

The single measurement inverse problem can be stated as follows:
\[
\text{Given $u_q|_{S \times (-1,T)}$ for some $T$, determine $q$ in $\mR^n$.}
\]
This corresponds to determining an inhomogeneity $q$ living inside $B$ by sending a plane wave $\delta(t-z)$ and measuring the scattered wave $u_q$ on the boundary of $B$ until time $T$.

We reduce this inverse problem to a unique continuation problem for the wave equation. To this end define the following subsets of $\mR^n \times \mR$: 
\begin{align*}
Q := B \times (-T,T), & \qquad \Sigma := S \times (-T,T), 
\\
Q_{\pm} := Q \cap \{ \pm(t-z) > 0 \},  & \qquad \Sigma_{\pm} := \Sigma \cap \{ \pm (t-z) \geq 0 \}, \\
\Gamma := \ol{Q} \cap \{t=z\}, & \qquad \Gamma_{\pm T} := \ol{Q} \cap \{t=\pm T\}.
\end{align*}
We will also need the vector fields 
\[
Z := \frac{1}{\sqrt{2}}(\partial_t + \partial_z), \qquad N := \frac{1}{\sqrt{2}}(\partial_t - \partial_z);
\]
note that $Z$ is tangential to $\Gamma$ and $N$ is normal to $\Gamma$.

Next, we state a result about a specific Carleman weight for the wave operator, which follows
from the discussion in \cite[Section 2.3]{RakeshSalo} and \cite[Lemma 3.2]{RakeshSalo}
(see Appendix \ref{sec_carleman_estimate} for the definition of a strongly pseudoconvex function). Note that the roles
of $\phi$ and $\psi$ in this paper are the reverse of the roles they play in \cite{RakeshSalo}.

\begin{Lemma} \label{lemma_pseudoconvex_weight}
Define 
\[
\psi(y,z,t) := 5(a-z)^2 + 5 \abs{y}^2 - (t-z)^2, \qquad (y,z) \in \R^n, ~ t \in \R,
\]
Given $T > 6$, there exists $a > 1$ such that 
\begin{itemize}
\item  the function $\phi = e^{\lambda \psi}$ is 
strongly pseudoconvex w.r.t $\Box$ in a (fixed) neighborhood of $\ol{Q}$ for sufficiently large $\lambda > 0$,
\item the smallest value of $\phi$ on $\Gamma$ is strictly larger than the largest value of $\phi$ on 
$\Gamma_T \cup \Gamma_{-T}$, 
\item the function 
\[
h(\sigma) := \sup_{(y,z) \in \ol{B}} \int_{-T}^T e^{2\sigma(\phi(y,z,t)-\phi(y,z,z))} \,dt
\]
satisfies $\lim_{\sigma \to \infty} h(\sigma) = 0$.
\end{itemize}
\end{Lemma}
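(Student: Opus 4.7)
The plan is to pick $a > 1$ to force the level-set separation in the second bullet, and then show that the pseudoconvexity and decay of $h(\sigma)$ follow for that $a$ regardless of how it is chosen (both conclusions turn out to be independent of $a$).

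\textbf{Step 1 (choice of $a$, bullet 2).} Since $\phi = e^{\lambda\psi}$ is monotone in $\psi$, bullet 2 is equivalent to $\min_\Gamma \psi > \max_{\Gamma_{\pm T}}\psi$. On $\Gamma$ (where $t=z$), $\psi = 5(a-z)^2 + 5|y|^2$ is bounded below by $5(a-1)^2$ on $\overline{B}$, with minimum at $(y,z) = (0,1)$. On $\Gamma_{\pm T}$, the Hessian of $\psi$ in $(y,z)$ is positive definite, so the maximum on $\overline{B}$ lies on the sphere $|y|^2 + z^2 = 1$; a one-variable analysis after substituting $|y|^2 = 1 - z^2$ shows that the overall maximum over $\Gamma_T \cup \Gamma_{-T}$ is $5(a+1)^2 - (T-1)^2$, attained on $\Gamma_{-T}$ at $(y,z) = (0,-1)$. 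The required strict inequality $5(a-1)^2 > 5(a+1)^2 - (T-1)^2$ simplifies to $(T-1)^2 > 20 a$. Since $T > 6$ gives $(T-1)^2 > 25$, any $a \in \bigl(1, (T-1)^2/20\bigr)$ works; I fix such an $a$.

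\textbf{Step 2 (strong pseudoconvexity, bullet 1).} By H\"ormander's standard exponentiation lemma it suffices to show that $\psi$ itself is strictly pseudoconvex with respect to $\Box$, i.e.\ $H_p^2\psi > 0$ on $\{p = 0,\; H_p\psi = 0,\; (\xi,\tau)\neq 0\}$ with $p(\xi,\tau) = |\xi|^2 - \tau^2$. Writing $\xi = (\eta,\zeta)$ and computing the Hamiltonian bracket directly, one finds
\[
H_p^2\psi = 40|\eta|^2 + 32\zeta^2 - 16\zeta\tau - 8\tau^2 = 8\bigl(5\tau^2 - (\zeta+\tau)^2\bigr)\quad\text{on } \{p=0\}.
\]
Since $\zeta^2 \leq \tau^2$ on the null cone, $|\zeta+\tau|^2 \leq 4\tau^2 < 5\tau^2$ whenever $\tau \neq 0$, so $H_p^2\psi > 0$ uniformly. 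The positivity is independent of $(y,z,t)$ and of $a$, so it persists on any neighborhood of $\overline{Q}$, and the exponentiation lemma then yields strong pseudoconvexity of $\phi = e^{\lambda\psi}$ for all sufficiently large $\lambda$. This essentially repeats the computation in \cite[Section~2.3]{RakeshSalo}, with the roles of $\phi$ and $\psi$ swapped.

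\textbf{Step 3 (decay of $h(\sigma)$, bullet 3).} The crucial feature of $\psi$ is the identity $\psi(y,z,t) - \psi(y,z,z) = -(t-z)^2$, whence
\[
\phi(y,z,t) - \phi(y,z,z) = e^{\lambda\psi(y,z,z)}\bigl(e^{-\lambda(t-z)^2} - 1\bigr).
\]
Since $\psi(y,z,z) = 5(a-z)^2 + 5|y|^2 \geq 5(a-1)^2 > 0$ on $\overline{B}$, the prefactor is uniformly bounded below by some $c_0 > 0$. Using the elementary bound $e^{-\lambda s^2} - 1 \leq -C\min(1,s^2)$ for an appropriate $C > 0$, the integrand in $h(\sigma)$ is dominated by $e^{-2\sigma c_0 C\,\min(1,(t-z)^2)}$. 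Splitting $\int_{-T}^T$ at $|t-z| = 1$ yields a Gaussian contribution of order $\sigma^{-1/2}$ from $|t-z|\leq 1$ and an exponentially small tail from $|t-z|>1$, both uniform in $(y,z)$, so $h(\sigma)\to 0$. The most delicate step is the bracket computation in Step 2, but it is streamlined by the fact that $H_p^2\psi$ is $(x,t)$-independent; the remaining arguments are standard Lagrange multipliers and a Gaussian tail estimate, and the choice of $a$ is driven entirely by the level-set separation in Step 1.
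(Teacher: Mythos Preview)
Your proof is correct and follows essentially the same approach as the paper, which does not give an independent argument but simply cites \cite[Section~2.3 and Lemma~3.2]{RakeshSalo} for all three bullets. Your Step~2 reproduces the Poisson-bracket computation from \cite{RakeshSalo} (as you note), and your Steps~1 and~3 supply the elementary calculus for the level-set separation and the decay of $h(\sigma)$ that are implicit in \cite[Lemma~3.2]{RakeshSalo}.
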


For use later, we also quote the following energy estimates from \cite[Lemmas 3.3--3.5]{RakeshSalo}.

\begin{Lemma} \label{lemma_energy_estimate_exterior}
Let $T > 1$ and $p \in C^{\infty}_c(\mR^n)$ be supported in $\ol{B}$. If $\alpha(x,t)$ is a smooth function on
$\{ t \geq z \}$ satisfying 
\begin{align*}
\Box \alpha &= 0 \text{ in $\{ (x,t) \,;\, \abs{x} > 1 \text{ and } t > z \}$}, \\
\alpha(y,z,z) &= \int_{-\infty}^z p(y,s) \,ds \text{ on $\{ \abs{x} > 1$\}}, \\
\alpha &= 0 \text{ in $\{ z < t < -1 \}$},
\end{align*}
then 
\[
\norm{\p_{\nu} \alpha}_{L^2(\Sigma_+)} \lesssim \norm{\alpha}_{H^1(\Sigma_+)} + \norm{\alpha}_{H^1(\Sigma_+ \cap \Gamma)}
\]
with the constant dependent only on $T$.
\end{Lemma}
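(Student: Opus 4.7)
The plan is to establish this hidden-regularity estimate by a Rellich-type multiplier identity for the wave equation in the exterior of the unit ball. In the region $\Omega_e := \{(x,t)\,:\,|x|>1,\ z < t < T\}$, $\alpha$ solves $\Box\alpha = 0$ with Dirichlet trace $\alpha|_{\Sigma_+}$ on the lateral cylinder, characteristic data $R(y,z) := \int_{-\infty}^z p(y,s)\,ds$ on $\{|x|>1,\ t=z\}$, and vanishes for $t<-1$. Since $\supp p \subset \ol{B}$, the function $R$ has compact $y$-support and stabilizes for $z>1$, so by finite propagation speed $\alpha$ has compact spatial support on $[-T,T]$ and all integrations by parts over $\Omega_e$ are justified without further cutoff.

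The central step is to apply the Rellich multiplier $X\alpha$, where $X := h(x)\cdot\nabla_x$ is a purely spatial vector field with $h$ a smooth compactly supported extension of the radial field $x$ from $\{|x|=1\}$ to $\R^n$. Multiplying $\Box\alpha = 0$ by $X\alpha$ and integrating by parts over $\Omega_e$ produces, schematically,
\[
\int_{\Sigma_+}|\partial_\nu\alpha|^2\,d\sigma \;=\; \int_{\Omega_e}F(\nabla_{x,t}\alpha)\,dx\,dt + \int_{\Gamma_T\cup\Gamma_{-T}}G_\pm(\alpha) + \int_{\Sigma_+\cap\Gamma}B_\Gamma(\alpha),
\]
where $F$, $G_\pm$, $B_\Gamma$ are explicit quadratic forms in $\nabla_{x,t}\alpha$ on each face. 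Because $\{t = z\}$ is characteristic for $\Box$ and $X$ is purely spatial, the corner flux $B_\Gamma$ involves only derivatives of $\alpha$ that are tangential to $\Sigma_+ \cap \Gamma$, so that $\left|\int_{\Sigma_+\cap\Gamma}B_\Gamma(\alpha)\right|\lesssim\norm{\alpha}_{H^1(\Sigma_+\cap\Gamma)}^2$.

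The interior and top/bottom contributions $F$ and $G_\pm$ are then controlled by a standard forward energy estimate for the wave equation on $\Omega_e$, with Dirichlet data on $\Sigma_+$ and zero past data, yielding a bound by $\norm{\alpha}_{H^1(\Sigma_+)}^2 + \norm{\alpha}_{H^1(\Sigma_+\cap\Gamma)}^2$ (the corner contribution accounting for the compatibility value of $\alpha$ along $\{|x|=1,\,t=z\}$, where the characteristic and lateral boundaries meet). Substituting back into the Rellich identity and taking square roots produces the desired inequality, with constant depending only on $T$ through the choice of $h$.

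The main obstacle is the characteristic face $\{t=z\}$: the energy form for $\Box$ is degenerate in the $N$-direction transverse to $\Gamma$, so a generic multiplier would produce a corner flux involving the null-normal component $N\alpha$ that cannot be controlled by $H^1$ norms of $\alpha$ on the codimension-two set $\Sigma_+\cap\Gamma$. The purely spatial choice $X = h\cdot\nabla_x$ is designed precisely to eliminate this adverse term, leaving only tangential derivatives on the corner and thereby yielding the natural norm $\norm{\alpha}_{H^1(\Sigma_+\cap\Gamma)}$ on the right-hand side.
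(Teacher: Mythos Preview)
The paper does not prove this lemma; it quotes it verbatim from \cite[Lemma~3.3]{RakeshSalo}. Your Rellich--multiplier strategy in the exterior is the natural route and is presumably close in spirit to what is done there, but the sketch has a geometric confusion that leaves a genuine gap.

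When you apply the divergence theorem on $\Omega_e=\{|x|>1,\ z<t<T\}$, the boundary pieces are the lateral cylinder $\Sigma_+$, the top $\{|x|>1,\ t=T\}$, and the full exterior characteristic \emph{hypersurface} $\Gamma_e:=\{|x|>1,\ t=z\}$ --- not the codimension-two corner $\Sigma_+\cap\Gamma$. (There is no $\Gamma_{-T}$ face at all; the part of $\Omega_e$ with $t<-1$ contributes nothing because $\alpha\equiv 0$ there.) Your key observation that, for a purely spatial multiplier $X=h\!\cdot\!\nabla_x$, the $N\alpha$ contribution cancels on $\{t=z\}$ is correct and important: a short computation gives
\[
F\!\cdot\!\nu\big|_{t=z}\;=\;-(Z\alpha)(h_y\!\cdot\!\nabla_y\alpha)-\tfrac{h_n}{\sqrt{2}}(Z\alpha)^2+\tfrac{h_n}{2\sqrt{2}}|\nabla_y\alpha|^2,
\]
and on $\Gamma_e$ one has $Z\alpha=\tfrac{1}{\sqrt2}\,p(y,z)=0$ since $\supp p\subset\ol B$. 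What remains, however, is a \emph{surface} integral $\int_{\Gamma_e}\tfrac{h_n}{2\sqrt2}|\nabla_yR|^2$, not a corner term. On $\Gamma_e\cap\{|y|<1,\ z>\sqrt{1-|y|^2}\}$ the integrand equals $|G(y)|^2$ with $G(y)=\nabla_y\!\int p(y,s)\,ds$, so one must show $\|G\|_{L^2(\{|y|<1\})}\lesssim\|\alpha\|_{H^1(\Sigma_+\cap\Gamma)}$. This is not automatic: the $y$-chart of the upper hemisphere is singular at the equator $|y|=1$, and the spherical $H^1$ norm sees only $|G|^2-(G\!\cdot\! y)^2$, missing the $y$-radial component of $G$. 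One can rescue this using that $p$ (and hence every derivative of $p$) vanishes on $\partial B$, so $G$ vanishes to infinite order at $|y|=1$; but a uniform bound still needs to be argued and is absent from your sketch.

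A second, smaller gap: your ``standard forward energy estimate'' in $\Omega_e$ produces the lateral flux $\int_{\Sigma_+}\partial_\nu\alpha\,\partial_t\alpha$, which is exactly the quantity being estimated. The usual remedy --- Young's inequality with a small $\eps\|\partial_\nu\alpha\|_{L^2(\Sigma_+)}^2$ absorbed into the Rellich term --- works, but it should be stated rather than hidden under ``standard''.
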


\begin{Lemma} \label{lemma_energy_estimate_top}
Let $T > 1$ and $q \in C^{\infty}_c(\mR^n)$ be supported in $\ol{B}$. For every $\alpha \in C^{\infty}(\ol{Q}_+)$ we have 
\begin{equation*}
\norm{\alpha}_{L^2(\Gamma_T)} + \norm{\nabla_{x,t} \alpha}_{L^2(\Gamma_T)} \lesssim \norm{\alpha}_{H^1(\Gamma)} + \norm{(\Box+q)\alpha}_{L^2(Q_+)} + \norm{\alpha}_{H^1(\Sigma_+)} + \norm{\p_{\nu} \alpha}_{L^2(\Sigma_+)}
\end{equation*}
with the constant dependent only on $\norm{q}_{L^{\infty}}$ and $T$.
\end{Lemma}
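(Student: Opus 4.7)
The plan is to prove this via the standard energy method for the wave equation on $Q_+$ with $2\partial_t \alpha$ as multiplier. Setting $F := (\Box+q)\alpha \in L^2(Q_+)$, multiplying the equation $\Box\alpha = F - q\alpha$ by $2\partial_t\alpha$, and rearranging yields the divergence identity
\begin{equation*}
\partial_t\bigl(|\partial_t\alpha|^2 + |\nabla_x\alpha|^2\bigr) - 2\di_x\bigl(\partial_t\alpha\,\nabla_x\alpha\bigr) = 2(F - q\alpha)\,\partial_t\alpha.
\end{equation*}
For each $\tau \in (-T, T]$ I would integrate this over the truncated region $Q_+^\tau := Q_+ \cap \{t<\tau\}$ and apply the divergence theorem in $(x, t)$-space. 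Because $T > 1$ and $B \subset \{|z|<1\}$, the bottom slice $Q_+ \cap \{t=-T\}$ is empty, so $\partial Q_+^\tau$ is the disjoint union of the spacelike top $Q_+ \cap \{t=\tau\}$, the characteristic piece $\Gamma \cap \{t<\tau\}$, and the lateral strip $\Sigma_+ \cap \{t<\tau\}$.

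Let $E(\tau)$ denote the integral of $|\partial_t\alpha|^2 + |\nabla_x\alpha|^2$ over the top slice. The flux through the top contributes $+E(\tau)$ to the divergence-theorem identity, while the flux through the lateral boundary is $-2\partial_t\alpha\,\partial_\nu\alpha$, which can be bounded by $\|\alpha\|_{H^1(\Sigma_+)}^2 + \|\partial_\nu\alpha\|_{L^2(\Sigma_+)}^2$ via Cauchy-Schwarz and Young's inequality. The key step is the flux through the characteristic face $\Gamma$: its outward unit normal in $(y, z, t)$ coordinates is $(0, \ldots, 0, 1, -1)/\sqrt{2}$, and a direct computation completing the square via $\partial_t\alpha + \partial_z\alpha = \sqrt{2}\,Z\alpha$ shows the flux equals $-\tfrac{1}{\sqrt{2}}\bigl(2(Z\alpha)^2 + |\nabla_y\alpha|^2\bigr)$. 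Being non-positive and purely tangential on $\Gamma$, this contributes on the other side of the identity a positive quantity controlled by $\|\alpha\|_{H^1(\Gamma)}^2$, since $Z$ and $\nabla_y$ span the tangent space to $\Gamma$.

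Combining these contributions with Young's inequality applied to the interior source term $2(F-q\alpha)\partial_t\alpha$ should yield
\begin{equation*}
E(\tau) \leq C\bigl[\|\alpha\|_{H^1(\Gamma)}^2 + \|(\Box+q)\alpha\|_{L^2(Q_+)}^2 + \|\alpha\|_{H^1(\Sigma_+)}^2 + \|\partial_\nu\alpha\|_{L^2(\Sigma_+)}^2\bigr] + C\int_{-T}^\tau E(s)\,ds + C\|\alpha\|_{L^2(Q_+^\tau)}^2,
\end{equation*}
with $C = C(T, \|q\|_{L^\infty})$. The last term is absorbed via the fundamental theorem of calculus: the identity $\alpha(y,z,t) = \alpha(y,z,z) + \int_z^t \partial_s\alpha\,ds$, together with Cauchy-Schwarz, gives $\|\alpha\|_{L^2(Q_+^\tau)}^2 \leq C\bigl(\|\alpha\|_{L^2(\Gamma)}^2 + \int_{-T}^\tau E(s)\,ds\bigr)$. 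Gronwall's inequality then closes the estimate on $E(\tau)$ uniformly for $\tau \in [-T, T]$, and setting $\tau = T$ gives the required bound on $E(T) = \|\nabla_{x,t}\alpha\|_{L^2(\Gamma_T)}^2$. The same identity applied at $t = T$ bounds $\|\alpha\|_{L^2(\Gamma_T)}^2$ by $\|\alpha\|_{L^2(\Gamma)}^2 + \|\partial_t\alpha\|_{L^2(Q_+)}^2$, both of which are already controlled. The main point requiring care is the null-flux computation on the characteristic face $\Gamma$: one must verify that the flux has the favorable sign and expresses itself in tangential derivatives only, so that it is absorbed into $\|\alpha\|_{H^1(\Gamma)}^2$ on the right-hand side; everything else is a routine Gronwall closure.
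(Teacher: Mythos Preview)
Your proposal is correct and is the standard energy/multiplier argument; the paper itself does not prove this lemma but simply quotes it from \cite[Lemma 3.4]{RakeshSalo}, where the proof is precisely this kind of energy estimate. Your flux computation on $\Gamma$ is right (it matches the expression \eqref{carleman_boundary_terms_gamma} specialized to the multiplier $\partial_t$), and the Gronwall closure together with the fundamental-theorem-of-calculus bound on $\norm{\alpha}_{L^2}$ is exactly what is needed.
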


\begin{Lemma} \label{lemma_energy_estimate_bottom}
Let $T > 1$, $q \in C^{\infty}_c(\mR^n)$ be supported in $\ol{B}$ and $\phi \in C^2(\ol{Q}_+)$. 
There are constants $C, \sigma_0 > 1$, depending only on $\norm{q}_{L^{\infty}}$, $\norm{\phi}_{C^2(\ol{Q}_+)}$ and $T$,
such that for every $\alpha \in C^{\infty}(\ol{Q}_+)$ and $\sigma \geq \sigma_0$ one has the estimate 
\begin{multline*}
\sigma^2 \norm{e^{\sigma \phi} \alpha}_{L^2(\Gamma)}^2  + \norm{e^{\sigma \phi} \nabla_{\Gamma} \alpha}_{L^2(\Gamma)}^2 \leq C \Big[ \sigma^3 \norm{e^{\sigma \phi} \alpha}_{L^2(Q_+)}^2 + \sigma \norm{e^{\sigma \phi} \nabla_{x,t} \alpha}_{L^2(Q_+)}^2 \\
 + \norm{e^{\sigma \phi} (\Box+q)\alpha}_{L^2(Q_+)}^2 + \sigma^2 \norm{e^{\sigma \phi} \alpha}_{L^2(\Sigma_+)}^2 + \norm{e^{\sigma \phi} \nabla_{x,t} \alpha}_{L^2(\Sigma_+)}^2 \Big].
\end{multline*}
\end{Lemma}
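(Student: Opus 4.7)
The plan is a weighted integration-by-parts (multiplier) argument on $Q_+$ exploiting that $\Gamma = \overline{Q} \cap \{t=z\}$ is characteristic for $\Box$, with outward unit normal to $Q_+$ equal to $-N$ and tangent space spanned by $Z$ and $\nabla_y$. Since the stated right-hand side has no contribution on $\Gamma_T$, every multiplier I use will carry a factor $(T-t)$, which vanishes on $\Gamma_T$ and is bounded below by $T-1 > 0$ on $\Gamma$. I would establish the two halves of the left-hand side by two separate multiplier identities, then sum them with $\sigma_0$ chosen large to absorb cross-terms.

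For the tangential-gradient term, I would multiply $(\Box+q)\alpha$ by $(\partial_t\alpha)(T-t)e^{2\sigma\phi}$ and integrate over $Q_+$. Using the standard energy identity $(\partial_t\alpha)\Box\alpha = \tfrac{1}{2}\partial_t(|\partial_t\alpha|^2 + |\nabla_x\alpha|^2) - \nabla_x\cdot(\partial_t\alpha\,\nabla_x\alpha)$ and integrating by parts, there is no boundary contribution on $\Gamma_T$ (by the $(T-t)$ factor) and the $\Sigma_+$ contribution is controlled by $\|e^{\sigma\phi}\nabla_{x,t}\alpha\|_{L^2(\Sigma_+)}^2$ (since $\nu_t = 0$ on $\Sigma_+$). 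The two $\Gamma$ contributions, from the $\partial_t$-integration by parts (with $\nu_t|_\Gamma = -1/\sqrt 2$) and from the $\nabla_x$-integration by parts (with unit normal $\hat z$ to the $x$-slice boundary at $z=t$, producing a factor $1/\sqrt 2$ from the surface measure), combine via the identity
\[
\tfrac{1}{2}(|\partial_t\alpha|^2 + |\nabla_x\alpha|^2) + (\partial_t\alpha)(\partial_z\alpha) = (Z\alpha)^2 + \tfrac{1}{2}|\nabla_y\alpha|^2
\]
into a single term $-\tfrac{1}{\sqrt 2}\int_\Gamma [(Z\alpha)^2 + \tfrac{1}{2}|\nabla_y\alpha|^2](T-z)e^{2\sigma\phi}\,dS_\Gamma$. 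Transposing to the LHS, this dominates $\|e^{\sigma\phi}\nabla_\Gamma\alpha\|_{L^2(\Gamma)}^2$ since $T-z \geq T-1$. The bulk terms arising from differentiating $(T-t)e^{2\sigma\phi}$ absorb into $\sigma\|e^{\sigma\phi}\nabla_{x,t}\alpha\|_{L^2(Q_+)}^2$, and the source term $(\Box+q)\alpha$, after Cauchy--Schwarz with an appropriate $\sigma$-weight, absorbs into $\|e^{\sigma\phi}(\Box+q)\alpha\|_{L^2(Q_+)}^2 + \sigma^3\|e^{\sigma\phi}\alpha\|_{L^2(Q_+)}^2$.

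For the $\sigma^2\|e^{\sigma\phi}\alpha\|_{L^2(\Gamma)}^2$ term, I would integrate $\partial_t[\alpha^2(T-t)e^{2\sigma\phi}]$ directly over $Q_+$. Only the $\Gamma$-boundary contributes (the $(T-t)$ factor kills $\Gamma_T$, and $\nu_t = 0$ on $\Sigma_+$), producing $-\tfrac{1}{\sqrt 2}\int_\Gamma \alpha^2(T-z)e^{2\sigma\phi}\,dS_\Gamma$ on the surface side, while the bulk side is $\int_{Q_+}[2\alpha(\partial_t\alpha)(T-t) + 2\sigma\phi_t\alpha^2(T-t) - \alpha^2]e^{2\sigma\phi}\,dV$. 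Multiplying by $\sigma^2$ and applying Cauchy--Schwarz, $2\sigma^2|\alpha(\partial_t\alpha)| \leq \sigma^3\alpha^2 + \sigma(\partial_t\alpha)^2$, bounds this by $\sigma^3\|e^{\sigma\phi}\alpha\|_{L^2(Q_+)}^2 + \sigma\|e^{\sigma\phi}\nabla_{x,t}\alpha\|_{L^2(Q_+)}^2$, with no $\Sigma_+$ or source terms needed. Summing the two identities and choosing $\sigma_0 \geq 1$ sufficiently large (depending on $\|q\|_\infty$, $\|\phi\|_{C^2(\overline{Q}_+)}$, and $T$) to absorb cross-terms establishes the lemma. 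The main obstacle, beyond the bookkeeping of signs and $\sigma$-powers, is verifying that the $\Gamma$ contributions in the gradient identity assemble into the tangential form $(Z\alpha)^2 + \tfrac{1}{2}|\nabla_y\alpha|^2$ rather than into a mixed form involving the normal derivative $N\alpha$, which would fail to control $\|\nabla_\Gamma\alpha\|_{L^2(\Gamma)}^2$.
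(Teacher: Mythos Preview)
Your argument is correct. The key computation---that on the characteristic surface $\Gamma$ the energy flux for the multiplier $\partial_t$ collapses to the purely tangential form $(Z\alpha)^2 + \tfrac{1}{2}|\nabla_y\alpha|^2$---is exactly right, and the $(T-t)$ cutoff cleanly kills $\Gamma_T$ while leaving a positive lower bound $T-1$ on $\Gamma$. The $\sigma$-bookkeeping for the bulk and lateral terms is routine and lands in the stated right-hand side.

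There is nothing to compare against in this paper: Lemma~\ref{lemma_energy_estimate_bottom} is not proved here but quoted from \cite[Lemma~3.5]{RakeshSalo}. Your direct multiplier proof is the natural one; the earlier paper argues in essentially the same spirit (a weighted energy identity with the $\partial_t$ multiplier on $Q_+$, exploiting that $\Gamma$ is characteristic so no normal derivative appears). One small point of phrasing: when you say the source term ``absorbs into $\|e^{\sigma\phi}(\Box+q)\alpha\|^2 + \sigma^3\|e^{\sigma\phi}\alpha\|^2$'', the Cauchy--Schwarz on $\int (\partial_t\alpha)\,(\Box+q)\alpha\,(T-t)e^{2\sigma\phi}$ also throws off a $\|e^{\sigma\phi}\partial_t\alpha\|_{L^2(Q_+)}^2$ piece, which you should explicitly route into $\sigma\|e^{\sigma\phi}\nabla_{x,t}\alpha\|_{L^2(Q_+)}^2$; but this is already covered by your earlier remark about bulk terms.
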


\section{Almost reflection symmetric perturbations} \label{sec_almost_reflection_symmetric}

We will use the notation from Section \ref{sec_time_domain_setting}. If $A$ is an $(n-1) \times (n-1)$ orthogonal matrix and 
$\sigma \in \{ +1, -1 \}$, we define 
\[
\breve{p}(y,z) := \frac{1}{2} \left[ p(y,z) - \sigma p(Ay, -z) \right].
\]
Comparing with \eqref{p_even_definition}--\eqref{p_odd_definition}, one has $\breve{p} = p_{\odd}$ when $\sigma=1$ and $\breve{p} = p_{\even}$ when $\sigma=-1$. The following result solves the time domain analogue of the fixed angle scattering problem for almost reflection symmetric potentials and gives a Lipschitz stability estimate.

\begin{Theorem} \label{thm_single_measurement_perturbation1}
Let $M > 1$, $T > 6$ and $\sigma \in \{1,-1\}$. There exist positive constants $C$ and $\eps$, depending only on $M$ and 
$T$, with the following property: if 
$q, p \in C^{\infty}_c(\mR^n)$ are supported in $\ol{B}$ and $\norm{q}_{C^{n+4}} \leq M$, $\norm{p}_{C^{n+4}} \leq M$, then 
\[
\norm{p}_{L^2(B)} \leq C ( \norm{u_{q+p}-u_q}_{H^1(\Sigma_+)} + \norm{u_{q+p}-u_q}_{H^1(\Sigma_+ \cap \Gamma)} )
\]
provided
\[
\norm{\breve{p}}_{H^1(B)} \leq \eps \norm{p}_{L^2(B)}.
\]
\end{Theorem}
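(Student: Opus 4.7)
The plan is to reduce the estimate to a Carleman inequality on the characteristic hypersurface $\Gamma$ applied to the difference $w := u_{q+p}-u_q$, and to handle the inevitable adverse boundary contribution using a reflection that exploits the almost symmetry of $p$.

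Setting $r(y,z):=\int_{-\infty}^{z}p(y,s)\,ds$ and subtracting the defining equations of Proposition~\ref{prop_uq_basic}, the function $w$ is smooth on $\{t\geq z\}$, vanishes for $t<-1$, and satisfies
\[
(\Box+q)w=-p\,u_{q+p}\quad\text{in }Q_+,\qquad w|_\Gamma=-\tfrac{1}{2}r.
\]
Since $p=\partial_z r$ and $Z$ is tangent to $\Gamma$, the chain rule gives $Zw|_\Gamma=-p/(2\sqrt{2})$. Hence $\|p\|_{L^2(B)}$ is comparable to $\|Zw\|_{L^2(\Gamma)}$, a component of $\|\nabla_\Gamma w\|_{L^2(\Gamma)}$, which is exactly the quantity on the left side of the Carleman estimate in Lemma~\ref{lemma_energy_estimate_bottom}.

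I would then apply Lemma~\ref{lemma_energy_estimate_bottom} to $\alpha=w$ with the pseudoconvex weight $\phi$ of Lemma~\ref{lemma_pseudoconvex_weight} (valid because $T>6$); I denote its large parameter by $\tau$ to avoid conflict with the symmetry sign. The left side is then bounded below by $c\|e^{\tau\phi|_\Gamma}p\|^2_{L^2(B)}$. On the right, the source $\|e^{\tau\phi}p\,u_{q+p}\|^2_{L^2(Q_+)}$ is bounded by $C(M)h(\tau)\|e^{\tau\phi|_\Gamma}p\|^2_{L^2(B)}$ using Proposition~\ref{prop_uq_basic} and the definition of $h$, hence absorbable for $\tau$ large. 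The lateral terms on $\Sigma_+$ are controlled by the measurement $\|w\|_{H^1(\Sigma_+)}+\|w\|_{H^1(\Sigma_+\cap\Gamma)}$ after using Lemma~\ref{lemma_energy_estimate_exterior} to bound the normal-derivative contribution $\|\partial_\nu w\|_{L^2(\Sigma_+)}$, since $w$ solves the free wave equation outside $\ol{B}$. The remaining interior terms $\tau^3\|e^{\tau\phi}w\|^2_{L^2(Q_+)}+\tau\|e^{\tau\phi}\nabla w\|^2_{L^2(Q_+)}$ are handled by a weighted wave-energy argument that exploits $\phi$ being maximal on $\Gamma$ and minimal on $\Gamma_{\pm T}$; this produces an adverse contribution of the form $\|e^{\tau\phi|_\Gamma}\nabla_y r\|^2_{L^2(B)}$, involving a tangential derivative of $r$ that cannot be absorbed by $\|p\|^2_{L^2(B)}$ alone.

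To dispose of the adverse term I would repeat the Carleman argument for a reflected companion $\tilde w$ built from $w$ via the involution $(y,z)\mapsto(Ay,-z)$ (with an appropriate time adjustment chosen to preserve the incoming plane wave), applied on its own characteristic domain with the reflected weight $\tilde\phi(y,z,t):=\phi(Ay,-z,t)$. Because the involution is a Euclidean isometry, the required lateral data for $\tilde w$ is determined by the measured trace of $w$ on $\Sigma_+$. Decomposing $p=p_\mathrm{sym}+\breve p$ with $p_\mathrm{sym}(y,z):=\tfrac12[p(y,z)+\sigma p(Ay,-z)]$, the symmetric part satisfies $p_\mathrm{sym}(Ay,-z)=\sigma p_\mathrm{sym}(y,z)$, and the adverse $\|e^{\tau\phi|_\Gamma}\nabla_y r\|^2$ term for $w$ matches the corresponding adverse contribution for $\tilde w$ up to the sign $\sigma$; summing or subtracting the two estimates in accordance with $\sigma$ cancels these adverse contributions while the $\|p\|^2_{L^2(B)}$ pieces add. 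The antisymmetric remainder $\breve p$ produces an error of order $\eps\|p\|^2_{L^2(B)}$ by the hypothesis $\|\breve p\|_{H^1(B)}\leq\eps\|p\|_{L^2(B)}$, absorbable for $\eps$ small; additional cross-terms stemming from the fact that $q$ is \emph{not} assumed symmetric (so $\tilde q(y,z):=q(Ay,-z)\neq q(y,z)$) and that $u_{q+p}$ does not equal its reflection are controlled using the $C^{n+4}$ bound $\|q\|,\|p\|\leq M$ together with the decay of $h(\tau)$. Taking $\tau$ large and $\eps=\eps(M)$ small yields the claimed Lipschitz estimate; the principal technical obstacle is executing this reflection-cancellation cleanly, since the cross-terms arising from the non-symmetric $q$ must be quantitatively absorbed using the interplay between the pseudoconvex weight's decay and the smallness of $\eps$.
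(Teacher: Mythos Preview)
Your overall plan---reduce to a Carleman-type inequality on $\Gamma$ and use a reflection to cancel the adverse boundary term---matches the paper's strategy, but the reflection mechanism you describe does not actually produce the cancellation, and the key computation that makes $\breve p$ appear is missing.

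The paper does \emph{not} introduce a reflected weight $\tilde\phi(y,z,t)=\phi(Ay,-z,t)$ and run a second Carleman estimate ``on its own characteristic domain.'' Instead, it defines $w_-(y,z,t):=-\sigma\, w(Ay,-z,-t)$ as a function on $Q_-=\{t<z\}$ and applies the Carleman estimate there with the \emph{same} weight $\phi$ from Lemma~\ref{lemma_pseudoconvex_weight}. Because $\Gamma$ is the lower boundary of $Q_+$ but the upper boundary of $Q_-$, the unit normals are opposite, and the Carleman boundary quadratic form $\int_\Gamma e^{2\tau\phi}F_j(x,\tau w_\pm,\nabla_\Gamma w_\pm)\nu_j\,dS$ appears with opposite signs in the two inequalities. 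Adding them leaves $F(w_+)-F(w_-)$, which, being a difference of values of a fixed quadratic form, is bounded by $C\|w_+-w_-\|_{H^1(\Gamma)}(\|w_+\|_{H^1(\Gamma)}+\|w_-\|_{H^1(\Gamma)})$ and hence absorbable by Cauchy's inequality. Your scheme---two estimates with two different weights---does not yield a common quadratic form, so there is no reason the adverse pieces should cancel ``up to the sign $\sigma$.'' (Incidentally, the adverse boundary term with this weight is not simply $\|e^{\tau\phi}\nabla_y r\|^2$; that identification is specific to the $y$-independent weight used in Section~\ref{sec_horizontally_controlled}.)

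The second missing ingredient is the explicit evaluation of the jump $w_+-w_-$ on $\Gamma$. Using $w_+(y,z,z)=-\tfrac12\int_{-\infty}^z p$ and the definition of $w_-$, one computes
\[
w_+(y,z,z)-w_-(y,z,z)=w_+\bigl(y,\sqrt{1-|y|^2},\sqrt{1-|y|^2}\bigr)+\int_z^\infty \breve p(y,s)\,ds,
\]
so $\|w_+-w_-\|_{H^1(\Gamma)}\lesssim \|w_+\|_{H^1(\Sigma_+\cap\Gamma)}+\|\breve p\|_{H^1(B)}$ after a Poincar\'e inequality. This is precisely where $\breve p$ enters, and it does so as a \emph{boundary} term, not through cross-terms in the PDE. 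In particular, the asymmetry of $q$ causes no difficulty: the reflected function $w_-$ satisfies $(\Box+q_-)w_-=(Zw_-)|_\Gamma f_-$ with $q_-(y,z)=-\sigma q(Ay,-z)$, and Proposition~\ref{prop_single_measurement_perturbation} allows arbitrary bounded $q_\pm$. So the ``cross-terms from non-symmetric $q$'' you anticipate do not arise; the only residual term is the jump computed above, and it is controlled directly by the hypothesis $\|\breve p\|_{H^1(B)}\le\eps\|p\|_{L^2(B)}$.
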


Theorem \ref{thm_single_measurement_perturbation1} will follow from the next result which proves uniqueness and stability for a certain linear inverse problem.

\begin{Proposition} \label{prop_single_measurement_perturbation}
Let $M > 1$ and $T > 6$. There is a $C(M,T) > 0$ so that if
\begin{align*}
(\Box + q_{\pm}) w_{\pm}(x,t) = (Zw_{\pm})(x,z) \, f_{\pm}(x,t) \, \text{ in $Q_{\pm}$},
\end{align*}
for some $q_{\pm} \in C^{\infty}_c(\mR^n)$ supported in $\ol{B}$, $f_{\pm} \in L^{\infty}(Q_{\pm})$ and
 $w_{\pm} \in H^2(Q_{\pm})$  with $\norm{q_{\pm}}_{L^{\infty}(B)} \leq M$, 
 $\norm{f_{\pm}}_{L^{\infty}(Q_{\pm})} \leq M$, then 
\begin{equation*}
\sum_{\pm} \norm{w_{\pm}}_{H^1(\Gamma)} \leq C \Big[ \norm{w_+-w_-}_{H^1(\Gamma)} + \sum_{\pm} (\norm{w_{\pm}}_{H^1(\Sigma_{\pm})} + \norm{\p_{\nu} w_{\pm}}_{L^2(\Sigma_{\pm})}) \Big].
\end{equation*}
\end{Proposition}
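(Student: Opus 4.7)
The plan is to apply a Carleman estimate with explicit boundary contributions to $w_+$ on $Q_+$ and to $w_-$ on $Q_-$ separately, using the pseudoconvex weight $\phi = e^{\lambda\psi}$ supplied by Lemma \ref{lemma_pseudoconvex_weight}, and then to combine the two inequalities by summation. The Carleman estimate of Appendix \ref{sec_carleman_estimate} (whose validity follows from the first bullet of Lemma \ref{lemma_pseudoconvex_weight}) should produce, for each sign, an interior inequality of schematic form
\begin{equation*}
\sigma^3 \norm{e^{\sigma\phi} w_\pm}_{L^2(Q_\pm)}^2 + \sigma \norm{e^{\sigma\phi} \nabla_{x,t} w_\pm}_{L^2(Q_\pm)}^2 \lesssim \norm{e^{\sigma\phi}(\Box+q_\pm)w_\pm}_{L^2(Q_\pm)}^2 + \mathcal{B}_{\Sigma_\pm} + \mathcal{B}_{\Gamma_{\pm T}} + \mathcal{B}_\Gamma^\pm,
\end{equation*}
where the $\mathcal{B}_\bullet$ are explicit boundary quadratic forms in $w$, $\nabla w$. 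Pairing this with Lemma \ref{lemma_energy_estimate_bottom} carries a weighted $H^1(\Gamma)$-norm of $w_\pm$ to the left-hand side. Two of the three boundary contributions are then benign: by the second bullet of Lemma \ref{lemma_pseudoconvex_weight}, $\mathcal{B}_{\Gamma_{\pm T}}$ is exponentially suppressed relative to the $\Gamma$-weight on the left and hence negligible for large $\sigma$, while $\mathcal{B}_{\Sigma_\pm}$ is dominated by the data $\norm{w_\pm}_{H^1(\Sigma_\pm)} + \norm{\partial_\nu w_\pm}_{L^2(\Sigma_\pm)}$ that already appears on the right-hand side of the proposition.

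The source is handled by exploiting the special structure $(\Box+q_\pm)w_\pm = (Zw_\pm)(x,z)\,f_\pm(x,t)$. Since $(Zw_\pm)(x,z)$ is a trace on $\Gamma$ depending only on $x$, Fubini in $t$ combined with the third bullet of Lemma \ref{lemma_pseudoconvex_weight} yields
\begin{equation*}
\norm{e^{\sigma\phi}(Zw_\pm)(x,z)\,f_\pm}_{L^2(Q_\pm)}^2 \leq M^2\, h(\sigma)\, \norm{e^{\sigma\phi} Zw_\pm}_{L^2(\Gamma)}^2.
\end{equation*}
Because $Z$ is tangential to $\Gamma$, the right-hand side is already part of $\norm{e^{\sigma\phi}\nabla_\Gamma w_\pm}_{L^2(\Gamma)}^2$, which has been brought to the left via Lemma \ref{lemma_energy_estimate_bottom}. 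Since $h(\sigma) \to 0$, fixing $\sigma$ sufficiently large allows this source contribution to be absorbed into the left-hand side.

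The main obstacle is the characteristic boundary term $\mathcal{B}_\Gamma^\pm$: because $\Gamma$ is characteristic for $\Box$, this quadratic form is sign-indefinite and cannot be absorbed using the Carleman weight alone. This is precisely why the paper tracks boundary terms explicitly rather than using the extension trick of \cite{RakeshSalo}. The key structural observation is that $\Gamma$ is the common boundary of $Q_+$ and $Q_-$ with opposite outward conormals, so when the two Carleman inequalities are added, $\mathcal{B}_\Gamma^+$ and $\mathcal{B}_\Gamma^-$ combine into an integral on $\Gamma$ depending only on the jumps of $w$ and of its derivatives across $\Gamma$. The tangential parts of these jumps are controlled by $\norm{w_+-w_-}_{H^1(\Gamma)}$, which yields the first term on the right-hand side of the proposition. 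The hardest step will be the bookkeeping needed to verify that the explicit $\Gamma$-form from Appendix \ref{sec_carleman_estimate} really does collapse, after the symmetric summation, into such a tangential-jump quantity, with any remaining normal-jump contribution absorbed into the left-hand side via the PDE satisfied by $w_\pm$. A final division by $e^{2\sigma\phi|_\Gamma}$ then converts the weighted estimate into the unweighted Lipschitz bound claimed.
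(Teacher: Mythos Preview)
Your plan is essentially the paper's own argument. Two small clarifications will streamline the execution: first, the exponential suppression of $\mathcal{B}_{\Gamma_{\pm T}}$ does not by itself make those terms negligible---you need Lemma \ref{lemma_energy_estimate_top} to bound the unweighted $H^1(\Gamma_{\pm T})$ norms by $\norm{w_\pm}_{H^1(\Gamma)}$ plus $\Sigma_\pm$-data (and the source, itself controlled by $\norm{Zw_\pm}_{L^2(\Gamma)}$), after which the weight gap absorbs them into the left. Second, there is in fact no normal-jump issue on $\Gamma$: the explicit formula \eqref{carleman_boundary_terms_gamma} shows that the $\Gamma$-boundary integrand depends only on $v$, $Zv$, and $\nabla_y v$, i.e.\ purely tangential quantities, so after summation the combined $\Gamma$-term is just the difference $F(\sigma w_+,\nabla_\Gamma w_+)-F(\sigma w_-,\nabla_\Gamma w_-)$ of a single quadratic form, which you estimate by $\norm{w_+-w_-}_{H^1(\Gamma)}\bigl(\norm{w_+}_{H^1(\Gamma)}+\norm{w_-}_{H^1(\Gamma)}\bigr)$ and absorb via Cauchy with $\varepsilon$.
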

Note the special structure of the right hand side of the PDE. It has the $(Zw_{\pm})(x,z)$ term which resides on $\Gamma$ and hence
the appropriate Carleman weight helps us absorb the right hand side of the PDE into the left hand side of the inequality. That is why there is no
$f_\pm$ term on the right hand side of the estimate.

\begin{proof}[Proof of Theorem \ref{thm_single_measurement_perturbation1}]
Assume that $q, p$ and $\sigma$ are as in the statement of the theorem and define 
\[
w := u_{q+p} - u_q,
\]
where $u_{q+p}$ and $u_q$ are as in Proposition \ref{prop_uq_basic}. The function $w$ is smooth on the region
$t \geq z$, solves the equation 
\[
(\Box + q)w = -p(x) u_{q+p} \text{ in $Q_+$}
\]
and on $\Gamma$, the bottom part of the boundary of $Q_+$, has the trace
\beqn
w(y,z,z) = -\frac{1}{2} \int_{-\infty}^z p(y,s) \,ds, \qquad \text{for all } ~ (y,z) \in \Bb,
\label{eq:wzzp}
\eeqn
so 
$Zw(y,z,z) = -\frac{1}{2\sqrt{2}} p(y,z)$. Thus, taking 
\[
w_+ = w, \qquad q_+ = q, \qquad f_+ = 2 \sqrt{2} u_{q+p},
\]
one has $(\Box+q_+)w_+ = (Zw_+)|_{\Gamma} f_+$ in $Q_+$. Moreover, $\norm{f_+}_{L^{\infty}(Q_+)} \leq C(M,T)$ by Proposition \ref{prop_uq_basic}.

Next, define $w_-$ in $Q_-$ by reflection, that is
\[
w_-(y,z,t) = -\sigma w_+(Ay,-z,-t), \qquad (y,z,t) \in Q_-;
\]
then on $Q_-$ we have
\begin{align*}
\Box w_-(y,z,t) &= -\sigma (\Box w_+)(Ay,-z,-t) \\
 &= -\sigma (-q_+ w_+ + (Zw_+)|_{\Gamma} f_+)(Ay,-z,-t).
\end{align*}
Further, a tangential derivative of the trace of $w_-$ on $\Gamma$ is given by
\[
Z w_-(y,z,z) = \sigma (Zw_+)(Ay,-z,-z), \qquad (y,z) \in \Bb,
\]
so, if we define
\[
q_-(y,z) = -\sigma q_+(Ay,-z,-t), \qquad f_-(y,z) = -f_+(Ay,-z,-t), \qquad (y,z) \in \Bb,
\]
then $(\Box+q_-)w_- = (Zw_-)|_{\Gamma} f_-$ in $Q_-$ and $\norm{f_-}_{L^{\infty}(Q_-)} \leq C(M,T)$.

Thus, we are exactly in the situation of Proposition \ref{prop_single_measurement_perturbation}, which implies that 
\begin{multline*}
\sum_{\pm} \norm{w_{\pm}}_{H^1(\Gamma)} \leq C(M,T) ( \norm{w_+-w_-}_{H^1(\Gamma)} \\
 + \sum_{\pm} (\norm{w_{\pm}}_{H^1(\Sigma_{\pm})} + \norm{\p_{\nu} w_{\pm}}_{L^2(\Sigma_{\pm})} + \norm{w_{\pm}}_{H^1(\Sigma_{\pm} \cap \Gamma)}) ).
\end{multline*}
By Lemma \ref{lemma_energy_estimate_exterior}, which applies in $Q_+$ as well as in $Q_-$, one has 
\[
\norm{\p_{\nu} w_{\pm}}_{L^2(\Sigma_{\pm})} \leq C(T) (\norm{w_{\pm}}_{H^1(\Sigma_{\pm})} + \norm{w_{\pm}}_{H^1(\Sigma_{\pm} \cap \Gamma)}).
\]
Using the definition of $w_-$, one also has 
\[
\norm{w_{-}}_{H^1(\Sigma_{-})} + \norm{w_{-}}_{H^1(\Sigma_{-} \cap \Gamma)} \leq \norm{w_{+}}_{H^1(\Sigma_{+})} + \norm{w_{+}}_{H^1(\Sigma_{+} \cap \Gamma)}.
\]
Moreover, using (\ref{eq:wzzp}) and the definition of $w_+$, $Z$, we have 
\[
\norm{p}_{L^2(B)} \lesssim \norm{Zw_+}_{L^2(\Gamma)} \leq \norm{w_+}_{H^1(\Gamma)}.
\]
Combining these estimates gives that 
\begin{equation} \label{pltwo_norm_intermediate_estimate}
\norm{p}_{L^2(B)} \leq C ( \norm{w_+-w_-}_{H^1(\Gamma)} + \norm{w_+}_{H^1(\Sigma_{+})} + \norm{w_+}_{H^1(\Sigma_{+} \cap \Gamma)}).
\end{equation}

Next, to estimate the jump from $w_-$ to $w_+$ across $\Gamma$, we observe that for all $(y,z) \in \Bb$
\begin{align*}
 -2(w_+(y,z,z) - w_-(y,z,z)) & = \int_{-\infty}^z p(y,s) \,ds + \sigma \int_{-\infty}^{-z} p(Ay,s) \,ds \\
 &= \int_{-\infty}^{\infty} p(y,s) \,ds - \int_z^{\infty} (p(y,s) - \sigma p(Ay,-s)) \,ds \\
 &= -2 w_+(y, \sqrt{1-\abs{y}^2}, \sqrt{1-\abs{y}^2}) - 2 \int_z^{\infty} \breve{p}(y,s) \,ds.
\end{align*}
Writing $h(y,z) = \int_z^{\infty} \breve{p}(y,s) \,ds$, one has 
\[
w_+(y,z,z) - w_-(y,z,z) = w_+(P(y,z)) + h(y,z), \qquad \text{for all } ~ (y,z) \in \Bb,
\]
where $P: (y,z) \mapsto (y, \sqrt{1-\abs{y}^2}, \sqrt{1-\abs{y}^2})$ maps $\ol{B}$ to $\Sigma_+ \cap \Gamma$. It follows that 
\[
\norm{w_+-w_-}_{H^1(\Gamma)} \lesssim \norm{w_+}_{H^1(\Sigma_+ \cap \Gamma)} + \norm{h}_{H^1(B)}.
\]
Since $h(y,\sqrt{1-\abs{y}^2}) = 0$ for $\abs{y} \leq 1$, a simple Poincar\'e inequality implies that 
\[
\norm{h}_{H^1(B)} \lesssim \norm{\p_z h}_{H^1(B)} = \norm{\breve{p}}_{H^1(B)}.
\]
Inserting these facts in \eqref{pltwo_norm_intermediate_estimate}, we see that 
\[
\norm{p}_{L^2(B)} \leq C ( \norm{\breve{p}}_{H^1(B)} + \norm{w_+}_{H^1(\Sigma_{+})} + \norm{w_+}_{H^1(\Sigma_{+} \cap \Gamma)}).
\]
We now choose $\eps$ so small that $C\eps \leq 1/2$. If $p$ satisfies $\norm{\breve{p}}_{H^1(B)} \leq \eps \norm{p}_{L^2(B)}$,
the $\norm{\breve{p}}_{H^1(B)}$ term can be absorbed by the left hand side and the theorem follows.
\end{proof}

\begin{proof}[Proof of Proposition \ref{prop_single_measurement_perturbation}]
Let $\phi$ be the weight in Lemma \ref{lemma_pseudoconvex_weight}, so that $\phi$ is strongly pseudoconvex for 
$\Box$ in a neighborhood of $\ol{Q}$. We first use a Carleman estimate with boundary terms on
$Q_+$ (below we write $w$ and $q$ instead of $w_+$ and $q_+$ for convenience). By Theorem \ref{thm:carleman}, 
for $\sigma \geq \sigma_0 $ with $\sigma_0 \geq 1$ sufficiently large, one has the estimate 
\begin{align}
\sigma^3 \norm{e^{\sigma \phi} w}_{L^2(Q_+)}^2 + \sigma \norm{e^{\sigma \phi} \nabla w}_{L^2(Q_+)}^2 
 + \sigma \int_{\p Q_+} e^{2\sigma \phi}   F_j(x, & \sigma w, \nabla w) \nu_j \,dS 
 \label{stability_first_estimate_jump}
 \\
 & \lesssim \norm{e^{\sigma \phi} (\Box+q) w}_{L^2(Q_+)}^2.
 \nn
\end{align}
It is proved in Section \ref{subsection_boundary_terms_wave} that the functions $F_j(x, q_0, q_1, \ldots, q_{n+1})$ are 
quadratic forms in the $q_j$ variables with smooth coefficients depending on $x$. Moreover, it will be important that 
on $\Gamma$, a subset of $\pa Q_+$, the functions $F_j$ depend only on the tangential derivatives of $w$ and not
on the normal derivative of $w$ (see \eqref{carleman_boundary_terms_gamma}).

Now the energy estimate in Lemma \ref{lemma_energy_estimate_bottom} shows that 
\begin{multline}
\sigma^2 \norm{e^{\sigma \phi} w}_{L^2(\Gamma)}^2 + \norm{e^{\sigma \phi} \nabla_{\Gamma} w}_{L^2(\Gamma)}^2 \lesssim \sigma^3 \norm{e^{\sigma \phi} w}_{L^2(Q_+)}^2 \\
 + \sigma \norm{e^{\sigma \phi} \nabla w}_{L^2(Q_+)}^2 + \norm{e^{\sigma \phi} (\Box+q)w}_{L^2(Q_+)}^2 + \sigma^2 \norm{e^{\sigma \phi} w}_{L^2(\Sigma_+)}^2 + \norm{e^{\sigma \phi} \nabla w}_{L^2(\Sigma_+)}^2. \label{stability_second_estimate_jump}
\end{multline}
Combining \eqref{stability_first_estimate_jump} and \eqref{stability_second_estimate_jump} and dropping the $L^2(Q_+)$ terms on the left gives the estimate 
\begin{multline}
\sigma^2 \norm{e^{\sigma \phi} w}_{L^2(\Gamma)}^2 + \norm{e^{\sigma \phi} \nabla_{\Gamma} w}_{L^2(\Gamma)}^2 + \sigma \int_{\Gamma} e^{2\sigma \phi} F_j(x, \sigma w, \nabla_{\Gamma} w) \nu_j \,dS \\
 \lesssim \norm{e^{\sigma \phi} (\Box+q) w}_{L^2(Q_+)}^2 + \sigma^3 \norm{e^{\sigma \phi} w}_{L^2(\Sigma_+ \cup \Gamma_T)}^2 + \sigma \norm{e^{\sigma \phi} \nabla w}_{L^2(\Sigma_+ \cup \Gamma_T)}^2. \label{stability_third_estimate_jump}
\end{multline}
For the terms over $\Gamma_T$, using the energy estimate in Lemma \ref{lemma_energy_estimate_top}, we have 
\begin{align*}
 \norm{w}_{L^2(\Gamma_T)}^2 +  \norm{\nabla w}_{L^2(\Gamma_T)}^2 
 & \lesssim \norm{w}_{H^1(\Gamma)}^2 + \norm{(\Box+q) w}_{L^2(Q_+)}^2 + \norm{w}_{H^1(\Sigma_+)}^2 + \norm{\p_{\nu} w}_{L^2(\Sigma_+)}^2 \\
 &  \lesssim \norm{w}_{H^1(\Gamma)}^2 + \norm{w}_{H^1(\Sigma_+)}^2 + \norm{\p_{\nu} w}_{L^2(\Sigma_+)}^2
\end{align*}
In the last line we used that $(\Box+q) w = (Zw)|_{\Gamma} f_+$ with $f_+$ bounded. Since $\phi$ satisfies $\sup_{\Gamma_T} \phi \leq \inf_{\Gamma} \phi - \delta$ for some $\delta > 0$ (see Lemma \ref{lemma_pseudoconvex_weight}), we have 
\begin{align*}
\sigma^3 \norm{e^{\sigma \phi} w}_{L^2(\Gamma_T)}^2 + 
 \sigma & \norm{e^{\sigma \phi} \nabla w}_{L^2(\Gamma_T)}^2
\\
& \lesssim \sigma^3 e^{-2\delta \sigma} \norm{e^{\sigma \phi} w}_{H^1(\Gamma)}^2 
 + \sigma^3 e^{2\sigma \sup_{\Gamma_T} \phi} (\norm{w}_{H^1(\Sigma_+)}^2 + \norm{\p_{\nu} w}_{L^2(\Sigma_+)}^2).
\end{align*}
Inserting this estimate in \eqref{stability_third_estimate_jump}, and choosing $\sigma$ so large that the term with $\sigma^3 e^{-2\delta \sigma}$ can be absorbed on the left, we observe that 
\begin{multline}
\sigma^2 \norm{e^{\sigma \phi} w}_{L^2(\Gamma)}^2 + \norm{e^{\sigma \phi} \nabla_{\Gamma} w}_{L^2(\Gamma)}^2 + \sigma \int_{\Gamma} e^{2\sigma \phi} F_j(x, \sigma w, \nabla_{\Gamma} w) \nu_j \,dS \\
 \lesssim \norm{e^{\sigma \phi} (\Box+q) w}_{L^2(Q_+)}^2 + \sigma^3 e^{C\sigma} \left[ \norm{w}_{L^2(\Sigma_+)}^2 + \norm{\nabla w}_{L^2(\Sigma_+)}^2 \right]. \label{stability_fourth_estimate_jump}
\end{multline}
Again $(\Box+q) w = (Zw)|_{\Gamma} f_+$ with $f_+$ bounded, so 
\[
\norm{e^{\sigma \phi} (\Box+q) w}_{L^2(Q_+)}^2 \lesssim h(\sigma) \norm{e^{\sigma \phi} \nabla_{\Gamma} w}_{L^2(\Gamma)}^2,
\]
where $h(\sigma)$ is the function in Lemma \ref{lemma_pseudoconvex_weight} with $h(\sigma) \to 0$ as $\sigma \to \infty$. 
Thus, for $\sigma$ large (depending on $M$ and $T$), the $h(\sigma)$ term 
 can be absorbed on the left. Fixing such a $\sigma$, from \eqref{stability_fourth_estimate_jump} we obtain the estimate 
\begin{equation}
c \norm{w}_{H^1(\Gamma)}^2 + \sigma \int_{\Gamma} e^{2\sigma \phi} F_j(x, \sigma w, \nabla_{\Gamma} w) \nu_j \,dS \leq C (\norm{w}_{L^2(\Sigma_+)}^2 + \norm{\nabla w}_{L^2(\Sigma_+)}^2), \label{stability_fifth_estimate_jump}
\end{equation}
for some positive constants $c,C$ depending on $M,T$.

We rewrite the estimate \eqref{stability_fifth_estimate_jump} for $w = w_+$ as 
\begin{equation}
c \norm{w_+}_{H^1(\Gamma)}^2 + \sigma \int_{\Gamma} e^{2\sigma \phi} F_j(x, \sigma w_+, \nabla_{\Gamma} w_+) \nu_j \,dS \leq C (\norm{w_+}_{L^2(\Sigma_+)}^2 + \norm{\nabla w_+}_{L^2(\Sigma_+)}^2). \label{stability_sixth_estimate_jump}
\end{equation}
{\bf Fix $\nu$ to be the downward pointing unit normal to $\Gamma$}, so $\nu$ is an exterior normal for $\ol{Q}_+$.
An analogous argument in $Q_-$ yields the following estimate for $w_-$:
\begin{equation}
c \norm{w_-}_{H^1(\Gamma)}^2 - \sigma \int_{\Gamma} e^{2\sigma \phi} F_j(x, \sigma w_-, \nabla_{\Gamma} w_-) \nu_j \,dS \leq C (\norm{w_-}_{L^2(\Sigma_-)}^2 + \norm{\nabla w_-}_{L^2(\Sigma_-)}^2). \label{stability_seventh_estimate_jump}
\end{equation}
Note the negative sign in front of $\sigma$ in (\ref{stability_seventh_estimate_jump}) in comparison with
the positive sign in front of $\sigma$ in (\ref{stability_sixth_estimate_jump}); that is so because the $\nu$ we fixed is an 
interior normal for $Q_-$ on $\Gamma$.
Adding up \eqref{stability_sixth_estimate_jump} and \eqref{stability_seventh_estimate_jump} and noting
that the $F_j$ are quadratic forms in $\sigma w_{\pm}$ and $\nabla_{\Gamma} w_{\pm}$, we have 
\begin{align}
c \sum_{\pm} \norm{w_{\pm}}_{H^1(\Gamma)}^2 
\leq 
C \norm{w_+   - w_-}_{H^1(\Gamma)}& (   \norm{w_+}_{H^1(\Gamma)} + \norm{w_-}_{H^1(\Gamma)})
\label{stability_eight_estimate_jump}
\\
& + C \sum_{\pm} (\norm{w_{\pm}}_{L^2(\Sigma_{\pm})}^2 + \norm{\nabla w_{\pm}}_{L^2(\Sigma_{\pm})}^2),
\nn
\end{align}
for some positive constants $c, C$ depending on $\sigma$ (hence on $M$ and $T$). Using Cauchy's inequality with $\eps$ allows one to absorb the $\norm{w_{\pm}}_{H^1(\Gamma)}$ terms on the right into the terms on the left. This proves the proposition.
\end{proof}

\section{Horizontally controlled potentials} \label{sec_horizontally_controlled}

The following result is the time domain analogue of Theorem \ref{thm_main4} and also contains a Lipschitz stability estimate.

\begin{Theorem} \label{thm_single_measurement_perturbation2}
Let $M > 1$ and $T > 3$. There exist constants $C(M,T)>0$, $\eps(M,T)>0$ so that
if $q, p \in C^{\infty}_c(\mR^n)$ are supported in $\ol{B}$ and $\norm{q}_{C^{n+4}} \leq M$, $\norm{p}_{C^{n+4}} \leq M$,
then
\[
\norm{p}_{L^2(B)} \leq C (\norm{u_{q+p}-u_q}_{H^1(\Sigma_+)} + \norm{u_{q+p}-u_q}_{H^1(\Sigma_+ \cap \Gamma)})
\]
provided that the function 
\[
r(y,z) := \int_{-\infty}^z p(y,s) \,ds
\]
is horizontally $(M,\eps)$-controlled.
\end{Theorem}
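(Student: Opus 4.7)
The plan is to run the Carleman argument from the proof of Proposition \ref{prop_single_measurement_perturbation} on $Q_+$ alone --- with no reflected twin on $Q_-$ --- and absorb the resulting adverse-sign boundary integral on $\Gamma$ using the horizontal $(M,\eps)$-control of $r$. Set $w := u_{q+p}-u_q$. By Proposition \ref{prop_uq_basic}, $w$ is smooth on $\{t \geq z\}$ and satisfies
\[
(\Box+q)w = -p \cdot u_{q+p} = (Zw)|_{\Gamma} \cdot f \text{ in } Q_+, \qquad f := 2\sqrt{2}\, u_{q+p} \in L^\infty(Q_+),
\]
with trace $w(y,z,z) = -\tfrac12 r(y,z)$ on $\Gamma$, so $Zw|_\Gamma = -\tfrac{1}{2\sqrt{2}} p$. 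In the $(y,z)$-chart on $\Gamma$ the relations $r|_\Gamma = -2 w|_\Gamma$, $\nabla_y r|_\Gamma = -2 \nabla_y w|_\Gamma$ and $\p_z r = p = -2\sqrt{2}\,Zw|_\Gamma$ convert the horizontal $(M,\eps)$-control of $r$, integrated in $z$ over $(-1,1)$ (and using that $p$ is supported in $\ol{B}$ together with Cauchy--Schwarz to reduce the cylindrical $(y,z)$-integrals back to $\Gamma$), into
\[
\|\nabla_y w\|_{L^2(\Gamma)}^2 \leq C_0 M \|w\|_{L^2(\Gamma)}^2 + C_0(M+\eps)\|Zw\|_{L^2(\Gamma)}^2
\]
for an absolute constant $C_0$.

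Executing the Carleman scheme of Proposition \ref{prop_single_measurement_perturbation} on $Q_+$ --- Theorem \ref{thm:carleman} with the weight $\phi$ of Lemma \ref{lemma_pseudoconvex_weight}, Lemma \ref{lemma_energy_estimate_bottom} to transfer bulk $L^2(Q_+)$ control to $H^1(\Gamma)$ control, Lemma \ref{lemma_energy_estimate_top} plus the gap $\sup_{\Gamma_T}\phi < \inf_\Gamma\phi$ to eliminate the $\Gamma_T$ terms, and the absorption $\|e^{\sigma\phi}(\Box+q)w\|_{L^2(Q_+)}^2 \lesssim h(\sigma)\|e^{\sigma\phi} Zw\|_{L^2(\Gamma)}^2$ coming from the factored source and $h(\sigma)\to 0$ --- yields, for a large fixed $\sigma = \sigma(M,T)$,
\[
c\,\|w\|_{H^1(\Gamma)}^2 + \sigma \int_\Gamma e^{2\sigma\phi} F_j(x,\sigma w, \nabla_\Gamma w)\nu_j\,dS \leq C\bigl(\|w\|_{H^1(\Sigma_+)}^2 + \|\p_\nu w\|_{L^2(\Sigma_+)}^2\bigr),
\]
with positive $c,C$ depending on $M,T$. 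By \eqref{carleman_boundary_terms_gamma}, on $\Gamma$ the integrand $F_j\nu_j$ is a quadratic form in $(\sigma w, Zw, \nabla_y w)$ with bounded coefficients (no normal derivative of $w$ appearing), so its absolute value is dominated by $C_1(\|w\|_{L^2(\Gamma)}^2 + \|Zw\|_{L^2(\Gamma)}^2 + \|\nabla_y w\|_{L^2(\Gamma)}^2)$ for some $C_1 = C_1(M,T)$.

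Substituting the translated horizontal control into this bound, the adverse boundary integral is controlled by $C_2 \|w\|_{L^2(\Gamma)}^2 + C_1\bigl(1 + C_0(M+\eps)\bigr)\|Zw\|_{L^2(\Gamma)}^2$ with $C_2 = C_2(M,T)$. Revisiting the choice of $\sigma$ and enlarging it (so that $c$ grows via the Carleman gain) until $c$ dominates both $C_2$ and $C_1\bigl(1 + C_0 M\bigr)$, and then choosing $\eps = \eps(M,T)$ so small that $C_1 C_0 \eps < c/4$, lets the boundary integral be absorbed into $c\|w\|_{H^1(\Gamma)}^2$ on the left. The resulting estimate $\|w\|_{H^1(\Gamma)}^2 \leq C(\|w\|_{H^1(\Sigma_+)}^2 + \|\p_\nu w\|_{L^2(\Sigma_+)}^2)$, together with Lemma \ref{lemma_energy_estimate_exterior} to trade $\|\p_\nu w\|_{L^2(\Sigma_+)}$ for $\|w\|_{H^1(\Sigma_+)} + \|w\|_{H^1(\Sigma_+\cap\Gamma)}$, and the identity $\|p\|_{L^2(B)} \lesssim \|Zw\|_{L^2(\Gamma)} \leq \|w\|_{H^1(\Gamma)}$, would yield the theorem. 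I expect the main obstacle to be the explicit bookkeeping that pins down the adverse part of $F_j\nu_j$ on $\Gamma$: the horizontal control hypothesis is tailor-made to dominate $\nabla_y$ of the trace by the trace plus a tiny multiple of $Z$ of the trace, so any stray normal derivative or unfavorable cross-term appearing in $F_j\nu_j$ on $\Gamma$ would break the absorption.
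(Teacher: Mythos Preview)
Your absorption argument has a genuine gap in its $\sigma$-bookkeeping. You bound the $\Gamma$-integrand by $C_1(\sigma^2 w^2 + (Zw)^2 + |\nabla_y w|^2)$ with $\sigma$ already frozen, and then propose to enlarge $\sigma$ so that the Carleman gain $c$ dominates $C_1(1+C_0M)$. But $C_1$ carries the factor $\sigma\, e^{2\sigma\sup_\Gamma\phi}$ hidden in $\sigma\!\int_\Gamma e^{2\sigma\phi}F_j\nu_j$, while $c$ is at best $e^{2\sigma\inf_\Gamma\phi}$; since the weight of Lemma~\ref{lemma_pseudoconvex_weight} is nonconstant on $\Gamma$ the ratio $C_1/c\to\infty$, and even if $\phi$ were constant on $\Gamma$ you would still have $C_1/c\gtrsim\sigma$. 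A crude absolute-value bound on the boundary term can never be absorbed; one must exploit the sign structure in \eqref{carleman_boundary_terms_gamma}, namely that $N\phi|_\Gamma>0$ makes the $(Zv)^2+\sigma^2(Z\phi)^2v^2$ piece a \emph{gain}, yielding the improved left-hand side $\sigma^3\|e^{\sigma\phi}w\|_{L^2(\Gamma)}^2+\sigma\|e^{\sigma\phi}Zw\|_{L^2(\Gamma)}^2$ and leaving only $\sigma\|e^{\sigma\phi}\nabla_y w\|_{L^2(\Gamma)}^2$ as the adverse piece.

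This residual term is precisely where the choice of weight becomes decisive, and your weight fails here. The horizontal $(M,\eps)$-control is an inequality between $y$-integrals at each fixed $z$; it upgrades to the \emph{weighted} inequality $\sigma\|e^{\sigma\phi}\nabla_y w\|^2\le M\sigma\|e^{\sigma\phi}w\|^2+\eps\sigma\|e^{\sigma\phi}Zw\|^2$ only if $e^{2\sigma\phi(y,z,z)}$ can be pulled through the $y$-integral, i.e.\ only if $\phi$ is independent of $y$. The weight of Lemma~\ref{lemma_pseudoconvex_weight} contains $5|y|^2$, so your workaround --- strip the weight, apply control, restore the weight --- costs a factor $e^{2\sigma(\sup_\Gamma\phi-\inf_\Gamma\phi)}$, which destroys the absorption of the $M\sigma$-term into the $\sigma^3$-gain. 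The paper's remedy is to build a new $y$-independent weight (Lemma~\ref{lemma_carleman_weight_y_controlled}), $\phi=e^{\lambda\psi}$ with $\psi=\tfrac12((z-a)^2+(t-b)^2)$, engineered so that $N\phi|_\Gamma>0$, $Z\phi|_\Gamma<0$ and $\phi_y\equiv 0$; then the weighted horizontal control applies verbatim, $M\sigma\|e^{\sigma\phi}w\|^2$ is absorbed by $\sigma^3\|e^{\sigma\phi}w\|^2$ for $\sigma$ large, and $\eps\sigma\|e^{\sigma\phi}Zw\|^2$ is absorbed by the $\sigma\|e^{\sigma\phi}Zw\|^2$-gain for $\eps$ small. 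The $y$-independence of the Carleman weight is the missing idea in your scheme.
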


The following lemma gives an example of a perturbation $p$ such that the corresponding function $r$ is $(M,\ep)$-controlled.

\begin{Lemma} \label{lemma_p_controlled}
Suppose $\varphi_1, \ldots, \varphi_R$ are linearly independent functions in $C^{\infty}_c(\mR^{n-1})$ supported in the ball of radius $1/\sqrt{2}$ and define
\begin{equation} \label{p_finite_sum}
p(y,z) := \sum_{j=1}^R p_j(z) \varphi_j(y)
\end{equation}
for some functions $p_j \in C^{\infty}_c(\mR)$ supported in $(-1/\sqrt{2}, 1/\sqrt{2})$. The function 
\[
r(y,z) := \int_{-\infty}^z p(y,s) \,ds
\]
is $(M,0)$-controlled for some $M$ depending on $R$ and $\varphi_1, \ldots, \varphi_R$.
\end{Lemma}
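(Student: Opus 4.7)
The plan is to exploit the product structure of $p$ and reduce the slice-wise inequality to a finite-dimensional linear algebra comparison. Writing
\[
r(y,z) = \sum_{j=1}^R r_j(z) \varphi_j(y), \qquad r_j(z) := \int_{-\infty}^z p_j(s)\,ds,
\]
the function $r(\cdot,z)$ is, at each fixed $z$, a linear combination of $\varphi_1, \ldots, \varphi_R$ with scalar coefficients $r_j(z)$. The support condition is immediate since each $\varphi_j$ is supported in $\{|y| \leq 1/\sqrt{2}\} \subset \{|y| \leq 1\}$.

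I would then introduce the Gram-type matrices
\[
A_{jk} := \int_{\mR^{n-1}} \varphi_j(y) \varphi_k(y)\,dy, \qquad B_{jk} := \int_{\mR^{n-1}} \nabla_y \varphi_j(y) \cdot \nabla_y \varphi_k(y)\,dy,
\]
both symmetric and positive semidefinite and depending only on $\varphi_1, \ldots, \varphi_R$. A direct expansion gives
\[
\int_{\mR^{n-1}} |r(y,z)|^2\,dy = \mathbf{r}(z)^T A\, \mathbf{r}(z), \qquad \int_{\mR^{n-1}} |\nabla_y r(y,z)|^2\,dy = \mathbf{r}(z)^T B\, \mathbf{r}(z),
\]
where $\mathbf{r}(z) := (r_1(z), \ldots, r_R(z))^T$. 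Since $\varphi_1, \ldots, \varphi_R$ are continuous and linearly independent in $C^{\infty}_c(\mR^{n-1}) \subset L^2(\mR^{n-1})$, the Gram matrix $A$ is positive definite. By a standard generalized eigenvalue argument (or simply by compactness of the unit sphere in $\mR^R$), there exists a constant $M > 0$, depending only on $R$ and $\varphi_1, \ldots, \varphi_R$, such that $B \leq M A$ as quadratic forms on $\mR^R$. Applying this at each fixed $z$ yields
\[
\int_{\mR^{n-1}} |\nabla_y r(y,z)|^2\,dy \leq M \int_{\mR^{n-1}} |r(y,z)|^2\,dy,
\]
which is precisely the horizontally $(M, 0)$-controlled condition.

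There is no serious obstacle in this argument; the only small point to record is that linear independence of the $\varphi_j$ as continuous functions implies their linear independence as vectors in $L^2(\mR^{n-1})$, which is exactly what is needed to ensure $A \succ 0$ and hence the existence of the comparison constant $M$.
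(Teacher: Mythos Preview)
Your proof is correct and follows essentially the same approach as the paper: both write $r(\cdot,z)$ as a linear combination of the fixed $\varphi_j$ with coefficients $r_j(z)$, and both exploit the positive definiteness of the Gram matrix $((\varphi_j,\varphi_k)_{L^2})$ coming from linear independence to compare the two slice integrals. Your formulation via the matrix inequality $B \leq MA$ is slightly more streamlined than the paper's (which uses a triangle inequality for the $\nabla_y$ term and the Gram matrix only for the lower bound on $\int |r|^2$), but the underlying idea is identical.
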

\begin{proof}
Note that $p$ is smooth and supported in $\ol{B}$. The function $r(y,z)$ has the form 
\[
r(y,z) = \sum_{j=1}^R r_j(z) \varphi_j(y), \qquad r_j(z) = \int_{-\infty}^z p_j(s) \,ds.
\]
By the triangle inequality 
\[
\int_{\mR^{n-1}} \abs{\nabla_y r(y,z)}^2 \,dy \lesssim \sum_{j=1}^R \ \abs{r_j(z)}^2,
\]
and moreover  
\[
\int_{\mR^{n-1}} \abs{r(y,z)}^2 \,dy = \sum_{j,k=1}^R r_j(z) r_k(z) (\varphi_j, \varphi_k)_{L^2(\mR^{n-1})} \sim \sum_{j=1}^R \ \abs{r_j(z)}^2
\]
since the matrix $((\varphi_j, \varphi_k)_{L^2(\mR^{n-1})})_{j,k=1}^R$ is positive definite by the linear independence of $\varphi_1, \ldots, \varphi_R$. Thus $r(y,z)$ is horizontally $(M,0)$-controlled for some $M$ depending on $R, \varphi_1, \ldots, \varphi_R$.
\end{proof}

Theorem \ref{thm_single_measurement_perturbation2} will be a consequence of the following proposition.

\begin{Proposition} \label{prop_single_measurement_perturbation2}
Let $M > 1$, $T > 3$. There are $C(M,T)$, $\eps(M,T) > 0$  so that if
\begin{align*}
(\Box + q) w(x,t) = (Zw)(x,z) f(x,t) \text{ in $Q_+$},
\end{align*}
for some $q \in C^{\infty}_c(\mR^n)$ supported in $\ol{B}$, $f \in L^{\infty}(Q_+)$ and $w \in H^2(Q_+)$ with 
$\norm{q}_{L^{\infty}(B)} \leq M$ and $\norm{f}_{L^{\infty}(Q_+)} \leq M$, then
\[
\norm{w}_{L^2(\Gamma)} + \norm{Zw}_{L^2(\Gamma)} \leq C ( \norm{w}_{H^1(\Sigma_+)} + \norm{\p_{\nu} w}_{L^2(\Sigma_+)})
\]
provided that the function $r(y,z) := w(y,z,z)$ is $(M,\eps)$-controlled.
\end{Proposition}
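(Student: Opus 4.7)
The proof proceeds along the same lines as the proof of Proposition \ref{prop_single_measurement_perturbation}, but now we have only $Q_+$ to work on and no reflection argument to cancel the adverse characteristic boundary term. The $(M,\eps)$-controlled hypothesis on $r(y,z)=w(y,z,z)$ will play the role formerly played by reflection.

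First I would repeat, verbatim, the chain of estimates \eqref{stability_first_estimate_jump}--\eqref{stability_fifth_estimate_jump} from the proof of Proposition \ref{prop_single_measurement_perturbation} applied to $w$ on $Q_+$: apply the Carleman estimate with boundary terms from Theorem \ref{thm:carleman}, with the pseudoconvex weight $\phi$ of Lemma \ref{lemma_pseudoconvex_weight}, then use the energy estimate Lemma \ref{lemma_energy_estimate_bottom} to absorb the bulk terms into $\Gamma$-norms, use Lemma \ref{lemma_energy_estimate_top} together with $\sup_{\Gamma_T}\phi<\inf_\Gamma\phi-\delta$ to absorb the $\Gamma_T$ contribution, and use the special form $(\Box+q)w = (Zw)|_\Gamma\,f$ together with $h(\sigma)\to 0$ from Lemma \ref{lemma_pseudoconvex_weight} to absorb the source term into the $\nabla_\Gamma w$ piece on the left. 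The outcome is, for some $c=c(M,T)>0$ and $C=C(M,T)>0$, an inequality of the shape
\begin{equation} \label{hctrl_base}
c\,\bigl(\norm{w}_{L^2(\Gamma)}^2+\norm{\nabla_\Gamma w}_{L^2(\Gamma)}^2\bigr) + \sigma\int_{\Gamma} e^{2\sigma\phi}\,F_j\!\bigl(x,\sigma w,\nabla_\Gamma w\bigr)\,\nu_j\,dS \le C\bigl(\norm{w}_{H^1(\Sigma_+)}^2+\norm{\p_\nu w}_{L^2(\Sigma_+)}^2\bigr).
\end{equation}
Here $\nabla_\Gamma w$ splits into $\nabla_y w$ and $Zw$, since $Z$ is tangential and $N$ is normal to $\Gamma$, and $F_j\nu_j$ on $\Gamma$ is, by the explicit computation of Section \ref{subsection_boundary_terms_wave} (see the form referenced around \eqref{carleman_boundary_terms_gamma}), a quadratic form only in $w$, $\nabla_y w$ and $Zw$ with smooth coefficients.

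Next I would read off from that explicit form the signed decomposition $F_j\nu_j = Q_{yy}(\nabla_y w,\nabla_y w) + Q_{ZZ}(Zw,Zw) + \text{(cross terms and }w\text{-terms)}$ on $\Gamma$, and bound the cross and pure $w$ contributions by Cauchy--Schwarz in terms of $\sigma\norm{w}_{L^2(\Gamma)}^2$, $\sigma^{-1}\norm{\nabla_y w}_{L^2(\Gamma)}^2$ and $\sigma^{-1}\norm{Zw}_{L^2(\Gamma)}^2$ (with $e^{2\sigma\phi}$ absorbed into the constants, $\phi$ being smooth and bounded on $\Gamma$). The essential point is the sign of the coefficient multiplying $\norm{\nabla_y w}_{L^2(\Gamma)}^2$: this is the characteristic boundary term with ``wrong'' sign that could not be absorbed for a general $w$. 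If it happens to be good, no hypothesis on $p$ is needed; the relevant case is when it is $-C\sigma\norm{\nabla_y w}_{L^2(\Gamma)}^2$ (or at least not controlled by the $c\,\norm{\nabla_\Gamma w}_{L^2(\Gamma)}^2$ on the left, since $\sigma$ is large).

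The new ingredient is the $(M,\eps)$-controlled hypothesis on $r(y,z)=w(y,z,z)$. Since $\p_{y_i}r(y,z)=(\p_{y_i}w)(y,z,z)$ and $\p_z r(y,z)=\sqrt{2}\,(Zw)(y,z,z)$, integrating the slicewise inequality
\[
\int_{\mR^{n-1}}\abs{\nabla_y w(y,z,z)}^2 dy \le M\int_{\mR^{n-1}}\abs{w(y,z,z)}^2 dy + 2\eps\int_{\mR^{n-1}}\abs{Zw(y,z,z)}^2 dy
\]
in $z\in(-1,1)$, and parameterizing $\Gamma$ as a smooth graph over $\ol B$, yields
\begin{equation} \label{hctrl_horizontal}
\norm{\nabla_y w}_{L^2(\Gamma)}^2 \le C_0 M\,\norm{w}_{L^2(\Gamma)}^2 + C_0\,\eps\,\norm{Zw}_{L^2(\Gamma)}^2,
\end{equation}
for some dimensional constant $C_0$. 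Using \eqref{hctrl_horizontal} to substitute for every occurrence of $\norm{\nabla_y w}_{L^2(\Gamma)}^2$ on the left of \eqref{hctrl_base} (including inside the adverse $F_j\nu_j$ term), inequality \eqref{hctrl_base} becomes
\[
\bigl(c-C'\sigma\eps\bigr)\norm{Zw}_{L^2(\Gamma)}^2 + \bigl(c - C''\sigma M\bigr)\norm{w}_{L^2(\Gamma)}^2 \le C\bigl(\norm{w}_{H^1(\Sigma_+)}^2+\norm{\p_\nu w}_{L^2(\Sigma_+)}^2\bigr),
\]
and the bad $\norm{w}_{L^2(\Gamma)}^2$ coefficient is harmless because we may first use the good term $\sigma^2\norm{w}_{L^2(\Gamma)}^2$ produced by Lemma \ref{lemma_energy_estimate_bottom} (kept on the left) to dominate it. Finally we fix $\sigma=\sigma(M,T)$ large enough for the earlier absorptions to work and then choose $\eps=\eps(M,T)$ small enough that $C'\sigma\eps<c/2$. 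The main obstacle is the explicit sign analysis of $F_j\nu_j$ on the characteristic surface $\Gamma$ from the computation in Section \ref{subsection_boundary_terms_wave}, together with the bookkeeping that makes the orders of magnitude in $\sigma$ and $\eps$ compatible; once this is laid out, the inequality $\norm{w}_{L^2(\Gamma)}+\norm{Zw}_{L^2(\Gamma)} \le C(\norm{w}_{H^1(\Sigma_+)}+\norm{\p_\nu w}_{L^2(\Sigma_+)})$ drops out.
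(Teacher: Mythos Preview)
There is a genuine gap. You propose to reuse the weight $\phi$ from Lemma~\ref{lemma_pseudoconvex_weight}, but that weight depends on $y$ through the term $5|y|^2$. The paper instead constructs a \emph{new} $y$-independent weight (Lemma~\ref{lemma_carleman_weight_y_controlled}) specifically for this proposition, and the $y$-independence is essential, not cosmetic. The $(M,\eps)$-controlled hypothesis is a slicewise-in-$z$ inequality on unweighted $L^2$ norms over $\mR^{n-1}_y$; it transfers to the weighted norms $\|e^{\sigma\phi}\,\cdot\,\|_{L^2(\Gamma)}$ only if $e^{2\sigma\phi(y,z,z)}$ is constant in $y$, so that it factors out of each $y$-integral. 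With a $y$-dependent weight you would have to pass between weighted and unweighted norms on $\Gamma$ at the cost of a factor $e^{2\sigma(\max_\Gamma\phi-\min_\Gamma\phi)}$, and the term coming from $M\int|r|^2$ becomes $e^{c\sigma}\sigma M\|e^{\sigma\phi}w\|_{L^2(\Gamma)}^2$, which cannot be absorbed by $\sigma^3\|e^{\sigma\phi}w\|_{L^2(\Gamma)}^2$ for large $\sigma$. Your suggestion to ``absorb $e^{2\sigma\phi}$ into the constants'' is exactly this exponential loss, and it breaks the hierarchy you rely on in the last display.

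There is a second, related issue in your $\sigma$-bookkeeping. The characteristic boundary term $\sigma\int_\Gamma e^{2\sigma\phi}F_j\nu_j$ is a quadratic form in $(\sigma w,\nabla_\Gamma w)$ and hence of size $\sigma^3\|e^{\sigma\phi}w\|^2+\sigma\|e^{\sigma\phi}\nabla_\Gamma w\|^2$, one full power of $\sigma$ larger than the ``good'' terms $\sigma^2\|e^{\sigma\phi}w\|^2+\|e^{\sigma\phi}\nabla_\Gamma w\|^2$ supplied by Lemma~\ref{lemma_energy_estimate_bottom}. So the useful lower bounds must come from the sign structure of $\nu^jE^j$ itself, not from Lemma~\ref{lemma_energy_estimate_bottom}. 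The paper does exactly this: with the $y$-independent weight one has $\phi_y=0$, $N\phi|_\Gamma>0$, $Z\phi|_\Gamma<0$, and then \eqref{carleman_boundary_terms_gamma} gives
\[
\sigma\int_\Gamma\nu^jE^j\;\ge\;c\sigma\bigl(\|e^{\sigma\phi}Zw\|_{L^2(\Gamma)}^2+\sigma^2\|e^{\sigma\phi}w\|_{L^2(\Gamma)}^2\bigr)-C\sigma\|e^{\sigma\phi}\nabla_y w\|_{L^2(\Gamma)}^2.
\]
Now the adverse term $C\sigma\|e^{\sigma\phi}\nabla_y w\|^2$ is controlled via the hypothesis (weight factors out) by $CM\sigma\|e^{\sigma\phi}w\|^2+C\eps\sigma\|e^{\sigma\phi}Zw\|^2$; the first is absorbed by $c\sigma^3\|e^{\sigma\phi}w\|^2$ for $\sigma$ large, the second by $c\sigma\|e^{\sigma\phi}Zw\|^2$ for $\eps$ small. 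Your sketch reverses this order of dominance and therefore cannot close.
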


\begin{proof}[Proof of Theorem \ref{thm_single_measurement_perturbation2}]
Define 
\[
w := u_{q+p} - u_q.
\]
By Proposition \ref{prop_uq_basic}, the function $w$ is smooth in $\{ t \geq z \}$ and solves 
\[
(\Box + q)w = -p u_{q+p} \qquad \text{in $Q_+$},
\]
and $r(y,z) := w(y,z,z)$ is given by 
\[
r(y,z) = -\frac{1}{2} \int_{-\infty}^z p(y,s) \,ds.
\]
In particular, 
\begin{equation} \label{zw_formula}
Zw(y,z,z) = \frac{1}{\sqrt{2}} \p_z(w(y,z,z)) = -\frac{1}{2 \sqrt{2}} p(y,z).
\end{equation}
We may thus use Proposition \ref{prop_single_measurement_perturbation2} with the choice $f(x,t) := 2 \sqrt{2} u_{q+p}(x,t)$, and with some new choice of $M$, to obtain that 
\begin{equation} \label{zw_estimate}
\norm{w}_{L^2(\Gamma)} + \norm{Zw}_{L^2(\Gamma)} \leq C ( \norm{w}_{H^1(\Sigma_+)} + \norm{\p_{\nu} w}_{L^2(\Sigma_+)})
\end{equation}
where $C$ only depends on $M$ and $T$. By Lemma \ref{lemma_energy_estimate_exterior} we have 
\begin{equation} \label{pnuzw_estimate}
\norm{\p_{\nu} w}_{L^2(\Sigma_+)} \leq C(\norm{w}_{H^1(\Sigma_+)} + \norm{w}_{H^1(\Sigma_+ \cap \Gamma)}).
\end{equation}
Theorem \ref{thm_single_measurement_perturbation2} follows by combining \eqref{zw_estimate}, \eqref{zw_formula} and \eqref{pnuzw_estimate}.
\end{proof}

The proof of Proposition \ref{prop_single_measurement_perturbation2} is again based on a Carleman estimate. However, 
in this case, it is convenient to use a weight $\phi$ that is independent of $y$ and satisfies $N\phi|_{\Gamma} > 0$, 
$\p_t \phi|_{Q_+} \leq 0$. The following lemma gives one such weight.

\begin{Lemma} \label{lemma_carleman_weight_y_controlled}
For any $T > 3$ there exist $a > b \geq T$ so that if one defines 
\[
\psi(y,z,t) := \frac{1}{2} ( (z-a)^2 + (t-b)^2 ),
\]
then, for $\lambda > 0$ sufficiently large, the function 
\[
\phi(y,z,t) := e^{\lambda \psi(y,z,t)}
\]
is strongly pseudoconvex for $\Box$ in a neighborhood of $\ol{Q}$. Moreover, 
\[
N\phi|_{\Gamma} > 0, \qquad Z \phi|_{\Gamma} < 0, \qquad \p_t \phi|_{Q} \leq 0,
\]
the smallest value of $\phi$ on $\Gamma$ is strictly larger than the largest value of $\phi$ on $\Gamma_T$, 
and
\[
g_{\sigma}(y,z) := \int_z^T e^{2\sigma(\phi(y,z,t) - \phi(y,z,z))} \,dt \leq T+1,
\]
uniformly over $\sigma \geq 1$ and $(y,z) \in \ol{B}$.
\end{Lemma}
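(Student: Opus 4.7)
The plan is to choose $a$ and $b$ explicitly and then verify each listed property by direct computation, using throughout that $\phi = e^{\lambda \psi}$ is monotone in $\psi$, so sign conditions on derivatives of $\phi$ reduce to the corresponding conditions on $\psi$. The main difficulty lies in the tight coupling of $(a,b)$ forced by the comparison $\min_\Gamma \phi > \max_{\Gamma_T} \phi$; once compatible parameters are in hand, every other item follows from short calculations.

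For parameter selection I would exploit that $\psi$ is independent of $y$, so all relevant optima on $\Gamma$ and $\Gamma_T$ reduce to one-dimensional problems in $z \in [-1,1]$. Using $a+b > 2$ (which forces $2z - (a+b) < 0$) and $a > 1$ (which forces $z - a < 0$), one finds $\min_\Gamma \psi = \frac{1}{2}[(a-1)^2 + (b-1)^2]$ at $z=1$ and $\max_{\Gamma_T} \psi = \frac{1}{2}[(a+1)^2 + (T-b)^2]$ at $z=-1$. The comparison then rearranges to $(b-1)^2 - (b-T)^2 > 4a$. Writing $b = T+c$ with $c \geq 0$ and also requiring $a > b$, this reduces to the single scalar inequality $2c(T-3) > 4T - (T-1)^2$, which is solvable in $c \geq 0$ precisely because $T > 3$ (one may take $c=0$ once $T > 3 + 2\sqrt{2}$, but must take $c$ large in the borderline range). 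Fixing such a $c$ and choosing $a$ in the resulting nonempty open interval produces valid parameters.

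The sign conditions are routine: on $\Gamma$, $N\psi = (a-b)/\sqrt{2} > 0$ and $Z\psi = (2z-a-b)/\sqrt{2} < 0$, and on $\ol{Q}$, $\partial_t \psi = t - b \leq T - b \leq 0$. Multiplication by $\lambda e^{\lambda \psi} > 0$ transfers each sign to $\phi$. For strong pseudoconvexity of $\phi = e^{\lambda \psi}$ at large $\lambda$, I would invoke the standard H\"ormander criterion that it suffices for $\psi$ to be pseudoconvex with respect to $\Box$ and for $\nabla \psi$ to be nonvanishing in a neighborhood of $\ol{Q}$. The nonvanishing is immediate since $|z-a| \geq a - 1 > 0$ on $\ol{Q}$. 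For pseudoconvexity of $\psi$, because $\Box$ has constant coefficients the Hamilton field of $p(\xi) = \xi_t^2 - |\xi_x|^2$ is $H_p = 2\xi_t \partial_t - 2\xi_x \cdot \nabla_x$, and a direct calculation using $\nabla_x \psi = (0,\ldots,0,z-a)$ gives
\[
H_p^2 \psi \;=\; 4(\xi_t^2 + \xi_z^2).
\]
On the characteristic set $\{\xi_t^2 = |\xi_x|^2\}$, any nonzero $\xi$ has $\xi_t \neq 0$, so $H_p^2 \psi > 0$ there, giving pseudoconvexity (without any extra work in the $y$-directions despite $\psi$ being $y$-independent).

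Finally, for the $g_\sigma$ estimate I would note that for $(y,z) \in \ol{B}$ and $t \in [z,T]$,
\[
\psi(y,z,t) - \psi(y,z,z) \;=\; \tfrac{1}{2}(t-z)(t+z-2b) \;\leq\; 0,
\]
since $t \geq z$ and $t + z \leq T + 1 < 2T \leq 2b$. Hence $e^{2\sigma(\phi(y,z,t) - \phi(y,z,z))} \leq 1$ for every $\sigma \geq 0$, and integrating over $t \in [z,T]$ yields $g_\sigma(y,z) \leq T - z \leq T + 1$, uniformly in $\sigma \geq 1$. This is in fact a stronger conclusion than the decay statement in Lemma~\ref{lemma_pseudoconvex_weight}; no largeness of $\sigma$ is needed.
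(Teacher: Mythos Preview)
Your proposal is correct and follows essentially the same approach as the paper's proof: the same computation of $\{p,\{p,\psi\}\} = 4(\xi_t^2+\xi_z^2)$ for pseudoconvexity (you write it as $H_p^2\psi$), the same reduction of $\min_\Gamma\psi>\max_{\Gamma_T}\psi$ to a scalar inequality in $b$ (the paper obtains the equivalent condition $b>\frac{T^2-1}{2T-6}$), identical verification of the sign conditions, and the same observation that $\partial_t\psi\leq 0$ on $Q$ makes the $g_\sigma$ integrand bounded by $1$. The only cosmetic difference is your parametrization $b=T+c$ and your explicit factoring $\psi(t)-\psi(z)=\tfrac12(t-z)(t+z-2b)$ in place of the paper's one-line appeal to $\partial_t\phi\leq 0$.
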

\begin{proof}
Let $a > b \geq T > 3$. Note first that $\p_z \psi = z-a \neq 0$ whenever $\abs{z} \leq 1$, showing that $\nabla \psi$ is nonvanishing near $\ol{Q}$. The symbol of $\Box$ is 
\[
p(y,z,t,\eta,\zeta,\tau) = -\tau^2 + \abs{\eta}^2 + \zeta^2.
\]
Since $\psi$ only depends on $z$ and $t$, we compute 
\begin{align*}
\{ p, \psi \} &= 2 \zeta (z-a) - 2 \tau (t-b), \\
\{ p, \{ p, \psi \} \} &= (2 \zeta) (2 \zeta) + (2 \tau) (2 \tau) = 4(\zeta^2 + \tau^2).
\end{align*}
Thus always $\{ p, \{ p, \psi \} \} \geq 0$. If one has $\{ p, \{ p, \psi \} \}(y,z,t,\eta,\zeta,\tau) = 0$ at some point where $p = 0$, then $\zeta = \tau = 0$ and hence $p = \abs{\eta}^2 = 0$, showing that $\eta = \zeta = \tau = 0$. This proves that $\{ p, \{ p, \psi \} \} > 0$ whenever $p = \{ p, \psi \} = 0$ and $(\eta,\zeta,\tau) \neq 0$, and thus the level surfaces of $\psi$ are pseudoconvex for $\Box$. Combining Propositions \ref{prop:level} and \ref{prop:special}, it follows that $\phi$ is strongly pseudoconvex for $\Box$ near $\ol{Q}$ if $\lambda > 0$ is sufficiently large.

Now take $T > 3$ and compute 
\begin{align*}
\sqrt{2} N\psi|_{\Gamma} &= t-b - (z-a)|_{\Gamma} = a-b, \\
\sqrt{2} Z\psi|_{\Gamma} &= t-b + (z-a)|_{\Gamma} \leq 2 - a - b,
\end{align*}
with
\[
\p_t \psi|_{Q} = t-b|_{Q} \leq T-b.
\]
Thus $N\phi|_{\Gamma} > 0$, $Z\phi|_{\Gamma} < 0$ and $\p_t \phi|_{Q} \leq 0$ whenever $a > b \geq T > 3$. On $\Gamma$ we have 
\[
\psi(y,z,z) = \frac{1}{2}( (z-a)^2 + (z-b)^2 ) \geq \frac{1}{2}( (1-a)^2 + (1-b)^2 )
\]
since $\abs{z} \leq 1$ and $a, b \geq 1$. On $\Gamma_T$ we have 
\[
\psi(y,z,T) = \frac{1}{2}( (z-a)^2 + (T-b)^2 ) \leq \frac{1}{2}( (a+1)^2 + (T-b)^2 ).
\]
Comparing the two values on the right, we have 
\[
(1-a)^2 + (1-b)^2 - \left[ (a+1)^2 + (T-b)^2 \right] = - T^2 + 2bT - 4a - 2b + 1.
\]
Given $T > 3$, we want to choose $a > b \geq T$ so that the expression on the right is positive. Choosing $a > b$ but $a$ 
very close to $b$, it is enough to choose $b \geq T$ so that 
\[
-T^2 + (2T - 6)b + 1 > 0.
\]
Since $T > 3$, it is enough to choose $b$ so that $b > \frac{T^2-1}{2T-6}$ and $b \geq T$.

With the above choices, we have proved everything except for the claim about $g_{\sigma}$. However, since $\p_t \phi|_{Q} \leq 0$, the integrand in $g_{\sigma}$ is $\leq 1$ and hence $g_{\sigma}|_{\ol{B}} \leq T+1$ uniformly in $\sigma$.
\end{proof}

\begin{proof}[Proof of Proposition \ref{prop_single_measurement_perturbation2}]
Let $\phi$ be as in Lemma \ref{lemma_carleman_weight_y_controlled}. Repeating the argument in Proposition \ref{prop_single_measurement_perturbation} (but using Lemma \ref{lemma_carleman_weight_y_controlled} for the 
properties of $\phi$), we arrive at the estimate \eqref{stability_fourth_estimate_jump}, which we restate below except that 
we write the integrand on $\Gamma$ as $\nu^j E^j$ as in Theorem \ref{thm:carleman}. So, for any $\sigma \geq \sigma_0$ with 
$\sigma_0$ large enough, we have 
\begin{multline}
\sigma^2 \norm{e^{\sigma \phi} w}_{L^2(\Gamma)}^2 + \norm{e^{\sigma \phi} \nabla_{\Gamma} w}_{L^2(\Gamma)}^2 + \sigma \int_{\Gamma} \nu^j E^j \,dS \\
 \lesssim \norm{e^{\sigma \phi} (\Box+q) w}_{L^2(Q_+)}^2 + \sigma^3 e^{C\sigma} \left[ \norm{w}_{L^2(\Sigma_+)}^2 + \norm{\nabla w}_{L^2(\Sigma_+)}^2 \right],
\label{stability_fourth_estimate_jump2}
\end{multline}
with constants depending only on $M$ and $T$. 
Since $(\Box+q)w = Zw|_{\Gamma} f$ where $\norm{f}_{L^{\infty}} \leq M$, one has 
\[
\norm{e^{\sigma \phi} (\Box+q) w}_{L^2(Q_+)} \leq M \norm{e^{\sigma (\phi-\phi|_{\Gamma})} (e^{\sigma \phi} Zw)|_{\Gamma}}_{L^2(Q_+)} \leq M \norm{g_{\sigma} e^{\sigma \phi} Zw}_{L^2(\Gamma)}.
\]
By Lemma \ref{lemma_carleman_weight_y_controlled}, the function $g_{\sigma}$ is bounded uniformly over $\sigma$,
 hence one has $\norm{e^{\sigma \phi} (\Box+q) w}_{L^2(Q_+)} \leq C \norm{e^{\sigma \phi} Zw}_{L^2(\Gamma)}$ with 
 $C = C(M,T)$. Thus \eqref{stability_fourth_estimate_jump2} gives 
\begin{multline}
\sigma^2 \norm{e^{\sigma \phi} w}_{L^2(\Gamma)}^2 + \norm{e^{\sigma \phi} \nabla_{\Gamma} w}_{L^2(\Gamma)}^2 + \sigma \int_{\Gamma} \nu^j E^j \,dS \\
 \lesssim \norm{e^{\sigma \phi} Zw}_{L^2(\Gamma)}^2 + \sigma^3 e^{C\sigma} \left[ \norm{w}_{L^2(\Sigma_+)}^2 + \norm{\nabla w}_{L^2(\Sigma_+)}^2 \right]. \label{stability_fifth_estimate_jump2}
\end{multline}

At this point we study the integral over $\Gamma$ in (\ref{stability_fifth_estimate_jump2}).
Now $\phi$ is independent of $y$ and 
\[
N\phi|_{\Gamma} > 0, \qquad Z\phi|_{\Gamma} < 0
\]
by Lemma \ref{lemma_carleman_weight_y_controlled}. Hence, using the expressions for $E^j$ in 
\eqref{carleman_boundary_terms_gamma}, we have
\begin{equation} \label{carleman_boundary_terms_estimate1}
\sigma \int_{\Gamma} \nu^j E^j \,dS \geq c \sigma \int_{\Gamma} ( (Zv)^2 + \sigma^2 v^2 ) \,dS 
- C \sigma \int_{\Gamma} ( \abs{\nabla_y v}^2 + \abs{v}\,\abs{Zv} ) \,dS ,
\end{equation}
for some positive $c$, $C$ independent of $\sigma$; note that $v = e^{\sigma \phi} w$. Since
\[
Zv = e^{\sigma \phi}(Zw + \sigma (Z\phi) w),
\]
for every $r > 0$ we have
\begin{align*}
\norm{e^{\sigma \phi} Zw}_{L^2(\Gamma)}^2 &= \norm{Zv - e^{\sigma \phi} \sigma (Z\phi) w}_{L^2(\Gamma)}^2 \\
 &\leq (1+r) \norm{Zv}_{L^2(\Gamma)}^2 + (1+1/r) \norm{e^{\sigma \phi} \sigma (Z\phi) w}_{L^2(\Gamma)}^2.
\end{align*}
Taking $\beta := \frac{1}{1+r} \in (0,1)$, so $\frac{1}{r} = \frac{\beta}{1-\beta}$, we have 
\[
\norm{Zv}_{L^2(\Gamma)}^2 \geq \beta \norm{e^{\sigma \phi} Zw}_{L^2(\Gamma)}^2 - \frac{\beta}{1-\beta} \norm{e^{\sigma \phi} \sigma (Z\phi) w}_{L^2(\Gamma)}^2.
\]
Using this estimate  in \eqref{carleman_boundary_terms_estimate1} with sufficiently small $\beta \in (0,1)$, together with 
$2 ab < \ep a^2 + \ep^{-1} b^2$ for $\eps>0$, for $\sigma$ sufficiently large one has 
\[
\sigma \int_{\Gamma} \nu^j E^j \,dS \geq c \sigma \int_{\Gamma} e^{2\sigma \phi}( (Zw)^2 + \sigma^2 w^2 ) \,dS - C \sigma \int_{\Gamma} e^{2\sigma \phi} \abs{\nabla_y w}^2 \,dS.
\]
Inserting this in \eqref{stability_fifth_estimate_jump2} leads to 
\begin{multline*}
\sigma^3 \norm{e^{\sigma \phi} w}_{L^2(\Gamma)}^2 + \sigma \norm{e^{\sigma \phi} Zw}_{L^2(\Gamma)}^2 \lesssim \norm{e^{\sigma \phi} Zw}_{L^2(\Gamma)}^2 \\
 + \sigma \norm{e^{\sigma \phi} \nabla_{y} w}_{L^2(\Gamma)}^2  + \sigma^3 e^{C\sigma} \left[ \norm{w}_{L^2(\Sigma_+)}^2 + \norm{\nabla w}_{L^2(\Sigma_+)}^2 \right],
\end{multline*}
which, when compared to (\ref{stability_fourth_estimate_jump}),  has improved powers of $\sigma$ on the left hand side 
but with a $\nabla_y w$ term on the right hand side.
Choosing $\sigma$ large enough, we may absorb the $\norm{e^{\sigma \phi} Zw}_{L^2(\Gamma)}^2$ term into the left 
side, hence 
\begin{equation}
\sigma^3 \norm{e^{\sigma \phi} w}_{L^2(\Gamma)}^2 + \sigma \norm{e^{\sigma \phi} Zw}_{L^2(\Gamma)}^2 \lesssim \sigma \norm{e^{\sigma \phi} \nabla_{y} w}_{L^2(\Gamma)}^2 
 + \sigma^3 e^{C\sigma} \left[ \norm{w}_{L^2(\Sigma_+)}^2 + \norm{\nabla w}_{L^2(\Sigma_+)}^2 \right]. \label{stability_sixth_estimate_jump2}
\end{equation}

Now $\phi$ is independent of $y$, so invoking the assumption that $r(y,z) := w(y,z,z)$ is $(M,\eps)$-controlled ($\eps$ still to be determined) leads to the estimate 
\[
\sigma \norm{e^{\sigma \phi} \nabla_{y} w}_{L^2(\Gamma)}^2 \leq M \sigma \norm{e^{\sigma \phi} w}_{L^2(\Gamma)}^2 + \eps \sigma \norm{e^{\sigma \phi} Z w}_{L^2(\Gamma)}^2.
\]
Using this in \eqref{stability_sixth_estimate_jump2}, 
choosing $\eps(M,T)>0$ small enough and $\sigma$ large enough, we may absorb the 
$\eps \sigma \norm{e^{\sigma \phi} Z w}_{L^2(\Gamma)}^2$ term and the 
$M \sigma \norm{e^{\sigma \phi} w}_{L^2(\Gamma)}^2$ term
into the left hand side of \eqref{stability_sixth_estimate_jump2}.
So fixing a large enough $\sigma$ and letting all constants depend on $\sigma$, we obtain
\[
\norm{w}_{L^2(\Gamma)}^2 + \norm{Zw}_{L^2(\Gamma)}^2 \lesssim \norm{w}_{L^2(\Sigma_+)}^2 + \norm{\nabla w}_{L^2(\Sigma_+)}^2.
\]
This proves the proposition.
\end{proof}

\section{Equivalence of frequency and time domain problems} \label{sec_equivalence_frequency_time}

The following theorem shows that the scattering amplitude for a fixed direction $\omega \in S^{n-1}$ and the boundary measurements in the wave equation problem in Section \ref{sec_time_domain_setting} are equivalent information. Related results in the context of Lax-Phillips scattering theory in odd dimensions $n \geq 3$ are discussed in \cite{Melrose, Uhlmann_backscattering, MelroseUhlmann_bookdraft}. We write $u_q(x,t,\omega)$ for the solution in Proposition \ref{prop_uq_basic}, where $e_n$ is replaced by $\omega$, so that $u_q(x,t,\omega)$ is smooth in $\{ t \geq x \cdot \omega \}$.

\begin{Theorem} \label{thm_frequencydomain_reduction}
Let $n \geq 2$ and fix $\omega \in S^{n-1}$, $\lambda_0 > 0$. For any real valued $q_1, q_2 \in C^{\infty}_c(\mR^n)$
with support in $\ol{B}$, one has
\[
a_{q_1}(\lambda, \theta, \omega) = a_{q_2}(\lambda, \theta, \omega) \text{ for $\lambda \geq \lambda_0$ and $\theta \in S^{n-1}$}
\]
if and only if  
\[
u_{q_1}(x,t,\omega) = u_{q_2}(x, t, \omega) \text{ for $(x,t) \in (S \times \mR) \cap \{�t \geq x \cdot \omega \}$.}
\]
\end{Theorem}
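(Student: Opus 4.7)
The plan is to identify the time-domain and frequency-domain scattering measurements as Fourier transforms of one another. The key identity I would establish first is
\[
\hat u_q(x,\lambda,\omega) = \psi_q^s(x,\lambda,\omega), \qquad x \in \mR^n\setminus\ol{B},\ \lambda > 0,
\]
where $\hat u_q$ denotes the Fourier transform of $u_q$ in the $t$-variable. To prove this, observe that $U_q(x,t,\omega) := \delta(t - x\cdot\omega) + u_q(x,t,\omega)$ satisfies $(\Box + q)U_q = 0$; Fourier-transforming in $t$ yields $(-\Delta + q - \lambda^2)\hat U_q = 0$ with $\hat U_q = e^{i\lambda x\cdot\omega} + \hat u_q$. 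Causality of $u_q$ (support in $\{t \geq x\cdot\omega\}$ by Proposition \ref{prop_uq_basic}) combined with a Paley--Wiener type argument implies that $\hat u_q(x,\cdot,\omega)$ extends holomorphically to $\{\im\lambda > 0\}$ and satisfies the outgoing Sommerfeld condition at real $\lambda > 0$. Uniqueness of the outgoing eigenfunction then forces $\hat U_q(x,\lambda) = \psi_q(x,\lambda,\omega)$, which is the desired identity.

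To prove the direction $u_{q_1} = u_{q_2} \Rightarrow a_{q_1} = a_{q_2}$, note first that, since both $u_{q_j}$ vanish for $t < x\cdot\omega$ by Proposition \ref{prop_uq_basic}, the given equality on $(S\times\mR)\cap\{t\geq x\cdot\omega\}$ automatically extends to all of $S\times\mR$. Taking Fourier transforms in $t$ and applying the key identity gives $\psi_{q_1}^s|_{S} = \psi_{q_2}^s|_{S}$ for every $\lambda > 0$. Both $\psi_{q_j}^s$ satisfy the free Helmholtz equation with outgoing radiation condition in $\mR^n\setminus\ol{B}$, so uniqueness for the exterior Dirichlet problem with radiation condition forces them to coincide throughout $\mR^n\setminus\ol{B}$. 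Reading off the asymptotics at infinity yields $a_{q_1}(\lambda,\theta,\omega) = a_{q_2}(\lambda,\theta,\omega)$ for all $\lambda > 0$ and $\theta\in S^{n-1}$, in particular for $\lambda \geq \lambda_0$.

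For the reverse direction $a_{q_1} = a_{q_2} \Rightarrow u_{q_1} = u_{q_2}$, Rellich's uniqueness lemma applied to $\psi_{q_1}^s - \psi_{q_2}^s$ gives $\psi_{q_1}^s(x,\lambda,\omega) = \psi_{q_2}^s(x,\lambda,\omega)$ for $x\in\mR^n\setminus\ol{B}$ and $\lambda \geq \lambda_0$. Since $\lambda \mapsto \psi_q^s(x,\lambda,\omega)$ is holomorphic in the upper half plane by the Paley--Wiener aspect of the key identity, the equality extends by analytic continuation to all of $\{\im\lambda > 0\}$ and then to all $\lambda > 0$ by boundary values. Because $q$ is real, $u_q$ is real-valued, giving $\overline{\hat u_q(x,\lambda,\omega)} = \hat u_q(x,-\lambda,\omega)$, so the equality of Fourier transforms propagates to all of $\lambda\in\mR$. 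Fourier inversion then produces $u_{q_1}(x,t,\omega) = u_{q_2}(x,t,\omega)$ for $x\in S$ and all $t\in\mR$, hence on the prescribed subset.

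The main technical obstacle I anticipate is making the Fourier transform identity of the first step fully rigorous, uniformly in all dimensions $n \geq 2$. This requires enough decay or at least temperedness of $u_q(x,\cdot,\omega)$ for $x\in S$ to ensure the direct and inverse transforms make literal sense, together with a careful matching of the causal support property with the outgoing Sommerfeld condition. The argument must also accommodate both even and odd $n$, where the meromorphic structure of the outgoing resolvent across the real axis differs; for this reason it is cleaner to work on $\{\im\lambda > 0\}$ throughout and only pass to real $\lambda > 0$ by taking boundary values.
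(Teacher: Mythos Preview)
Your overall strategy---relate $u_q$ and $\psi_q^s$ by a transform in $t$, invoke Rellich, and propagate equality by analytic continuation---is the same as the paper's. However, there is a genuine gap in your key identity step, and it is precisely the point the paper isolates as the main difficulty.

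You assert that causality of $u_q$ plus Paley--Wiener gives holomorphy of $\hat u_q(x,\,\cdot\,,\omega)$ in the full half-plane $\{\im\lambda>0\}$, and correspondingly that $\lambda\mapsto\psi_q^s(x,\lambda,\omega)$ is holomorphic there. For general real-valued $q$ this is false. If $-\Delta+q$ has negative eigenvalues $-r_1^2,\dots,-r_N^2$, the resolvent $R_q(\lambda)$ has poles on the segment $i(0,r_0]$ with $r_0=\max(-\inf q,0)^{1/2}$, so $\psi_q^s$ is holomorphic only in $\mC_+\setminus i(0,r_0]$. Equivalently, $u_q(x,\,\cdot\,)$ may grow like $e^{r_0 t}$ as $t\to\infty$, so it need not be tempered and the Paley--Wiener argument you invoke yields holomorphy only in $\{\im\lambda>r_0\}$, not in all of $\mC_+$. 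The paper spells this out in the paragraph preceding Proposition~\ref{prop_resolvent_meromorphic} and explicitly labels the direct Fourier argument ``only formal''. The repair is exactly what the paper does: replace the Fourier transform by a Laplace transform on lines $\{\im\lambda=\mu\}$ with $\mu>r_0$ (Proposition~\ref{prop_uq_psiq_relation}), and run the analytic continuation in the connected domain $\mC_+\setminus i(0,r_0]$, using continuous extension to $(0,\infty)$ to match with the Rellich step at real $\lambda\geq\lambda_0$. Once this is done your reality/conjugation trick to reach $\lambda<0$ is unnecessary.

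A secondary, harmless difference: for the implication $u_{q_1}=u_{q_2}$ on $S\Rightarrow a_{q_1}=a_{q_2}$, you propagate from $S$ to $\mR^n\setminus\ol B$ in the frequency domain via uniqueness for the exterior radiating Dirichlet problem, whereas the paper propagates in the time domain, first recovering $\partial_\nu u_{q_1}=\partial_\nu u_{q_2}$ on $\Sigma_+$ from Lemma~\ref{lemma_energy_estimate_exterior} and then applying Holmgren together with finite speed of propagation in $\{|x|\geq 1,\ t\geq z\}$. Either route works once the transform identity is made rigorous.
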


Given the previous result, Theorem \ref{thm_main1} and Corollary \ref{thm_main2} in the introduction follow immediately from \cite[Theorem 1.2]{RakeshSalo} and \cite[Corollary 1.3]{RakeshSalo}, respectively. In a similar way, Theorems \ref{thm_main3} and \ref{thm_main4} follow from Theorems \ref{thm_single_measurement_perturbation1} and \ref{thm_single_measurement_perturbation2}, respectively.

We first give a formal argument explaining why Theorem \ref{thm_frequencydomain_reduction} could be true. It will be convenient to use the slightly nonstandard conventions 
\[
\tilde{f}(\lambda) = \int_{-\infty}^{\infty} e^{i\lambda t} f(t) \,dt, \qquad \breve{F}(t) = \frac{1}{2\pi} \int_{-\infty}^{\infty} e^{-i\lambda t} F(\lambda) \,d\lambda,
\]
for the Fourier transform and its inverse for Schwartz functions (and via extension also for tempered distributions) on the real line.

Let $q \in C^{\infty}_c(\mR^n)$ be supported in $\ol{B}$, and let $U_q(x,t,\omega)$ solve 
\[
(\p_t^2 - \Delta + q(x)) U_q = 0 \text{ in $\mR^n \times \mR$}, \qquad U_q|_{\{t < -1\}} = \delta(t-x\cdot \omega).
\]
Then of course $u_q = U_q - \delta(t-x \cdot \omega)$. Suppose for the moment that the Fourier transform of $U_q$ in the time variable is well defined. The function $\tilde{U}_q$ should then solve for each $\lambda \in \mR$ the equation 
\[
(-\Delta+q(x)-\lambda^2) \tilde{U}_q(x,\lambda) = 0 \text{ in $\mR^n$}.
\]
One has $\tilde{U}_q(x,\lambda) = e^{i\lambda x \cdot \omega} + \tilde{u}_q(x,\lambda)$ where $\tilde{u}_q(x,\lambda)$ extends holomorphically to $\{ \im(\lambda) > 0 \}$ since $u_q$ vanishes for $t < -1$. These are exactly the properties that characterize the outgoing eigenfunction $\psi_q(x,\lambda,\omega)$ discussed in Section \ref{sec_introduction}, and thus one might expect that 
\[
\tilde{U}_q(x,\lambda,\omega) = \psi_q(x, \lambda, \omega).
\]

Now, the condition $a_{q_1}(\lambda, \,\cdot\,, \omega) = a_{q_2}(\lambda, \,\cdot\,, \omega)$ implies (by the Rellich uniqueness theorem, see e.g.\ \cite{Hormander_rellich}) that the outgoing eigenfunctions for $q_1$ and $q_2$ agree outside the support of the potentials:
\begin{equation} \label{outgoing_eigenfunction_exterior}
\psi_{q_1}(\,\cdot\,, \lambda, \omega)|_{\mR^n \setminus \ol{B}} = \psi_{q_2}( \,\cdot\,, \lambda, \omega)|_{\mR^n \setminus \ol{B}}.
\end{equation}
If the map $\lambda \mapsto \psi_{q_j}(x,\lambda,\omega)$ were smooth near $\lambda = 0$, then one would have \eqref{outgoing_eigenfunction_exterior} for all $\lambda \in \mR$. Taking the inverse Fourier transform in $\lambda$ would imply that 
\[
U_{q_1}(\,\cdot\,, t,\omega)|_{\mR^n \setminus \ol{B}} = U_{q_2}(\,\cdot\,,t,\omega)|_{\mR^n \setminus \ol{B}}.
\]
This would show that the boundary measurements for the wave equation problem, for a plane wave traveling in direction $\omega$, agree for $q_1$ and $q_2$.

The argument above is only formal, since it requires taking Fourier transforms in time and needs 
the regularity of the map $\lambda \mapsto \psi_q(x, \lambda, \omega)$ on the real line. The regularity of this map is related to the poles of the meromorphic continuation of the resolvent $(-\Delta + q - \lambda^2)^{-1}$ initially defined in $\{ \im(\lambda) > 0 \}$. It is well known \cite{Melrose} that the resolvent family has at most finitely many poles in $\{ \im(\lambda) > 0 \}$, located at $i r_1, \ldots, i r_N$ where $-r_1^2, \ldots, -r_N^2$ are the negative eigenvalues of $-\Delta + q$. Moreover, there may be a pole at $\lambda = 0$ corresponding to a bound state or resonance at zero energy. Such poles do not exist in $\{ \im(\lambda) \geq 0 \}$ if $q \geq 0$, but for signed potentials they can exist and thus the argument above does not work in general.

We now give a rigorous proof of Theorem \ref{thm_frequencydomain_reduction}, working on the set $\{ \im(\lambda) > r \}$, where the resolvent family has no poles, and using the Laplace transform in time instead of the Fourier transform. We first recall a few basic facts about the resolvent family. Below $\mC_+ := \{ \lambda \in \mC \,;\, \im(\lambda) > 0 \}$.

\begin{Proposition} \label{prop_resolvent_meromorphic}
Let $q \in C^{\infty}_c(\mR^n)$ be real valued and let $r_0 = \max(-\inf q, 0 )^{1/2}$. For any $\lambda \in \mC_+ \setminus i(0,r_0]$, there is a bounded operator 
\[
R_q(\lambda): L^2(\mR^n) \to L^2(\mR^n)
\]
such that for any $f \in L^2(\mR^n)$, the function $u = R_q(\lambda) f$ is the unique solution in $L^2(\mR^n)$ of 
\[
(-\Delta + q -\lambda^2) u = f \text{ in $\mR^n$}.
\]
For any fixed $r > r_0$, one has 
\[
\norm{R_q(\lambda)}_{L^2 \to L^2} \leq C_{r,q}, \qquad \im(\lambda) \geq r.
\]
For any fixed $\rho \in C^{\infty}_c(\mR^n)$ and for $\lambda$ in the set $\mC_+ \setminus i(0,r_0]$, the family 
\[
(\rho R_q(\lambda) \rho)_{\lambda \in \mC_+ \setminus i(0,r_0]}
\]
is a holomorphic family of bounded operators on $L^2(\mR^n)$ that extends continuously to $\ol{\mC}_+ \setminus i[0,r_0]$.
\end{Proposition}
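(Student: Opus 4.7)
The plan is to derive this from the standard spectral theory of the selfadjoint Schr\"odinger operator $H := -\Delta + q$ on $L^2(\mR^n)$, combined with the limiting absorption principle for the free resolvent. First I would fix the structure of $\mathrm{spec}(H)$: since $q$ is real, bounded and $\Delta$-compact, Kato--Rellich makes $H$ selfadjoint with domain $H^2(\mR^n)$ and Weyl's theorem gives $\mathrm{spec}_{\mathrm{ess}}(H) = [0,\infty)$; Rellich's uniqueness theorem plus unique continuation rules out embedded positive eigenvalues; and the form inequality $\br{Hu,u} \geq -r_0^2 \norm{u}_{L^2}^2$ confines any negative eigenvalue to $[-r_0^2, 0)$. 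Hence $\mathrm{spec}(H) = [0,\infty) \cup F$ with $F \subset [-r_0^2, 0)$ finite.

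Next I would define $R_q$ and establish the uniform bound. For $\lambda = s + i\tau \in \mC_+$ the identity $\lambda^2 = (s^2 - \tau^2) + 2is\tau$ together with $\tau > 0$ forces $\lambda^2 \notin [0,\infty)$, while $\lambda^2 \in F$ forces $s = 0$ and $\lambda = i\sqrt{-\mu_j}$ for some $\mu_j \in F$, that is, $\lambda \in i(0,r_0]$. Thus on $\mC_+ \setminus i(0,r_0]$ the point $\lambda^2$ lies in the resolvent set of $H$, so $R_q(\lambda) := (H-\lambda^2)^{-1}$ is a bounded operator on $L^2$ and is holomorphic in $\lambda$ as the composition of the analytic resolvent of $H$ with the analytic map $\lambda \mapsto \lambda^2$. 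The spectral theorem gives $\norm{R_q(\lambda)}_{L^2 \to L^2} = 1/\dist(\lambda^2, \mathrm{spec}(H))$; using $|\lambda^2 - \mu|^2 = (s^2 - \tau^2 - \mu)^2 + 4 s^2 \tau^2$ for real $\mu$ and splitting into $|s|$ small or large (with $\tau \geq r > r_0$), one checks that $\dist(\lambda^2, \mathrm{spec}(H)) \geq c(r,q) > 0$ uniformly, giving the stated uniform bound.

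The remaining content is the continuous extension of $\rho R_q(\lambda) \rho$ to $\ol{\mC}_+ \setminus i[0,r_0]$, i.e.\ up to real $\lambda \neq 0$, and this is the hard part of the proposition. The plan is to invoke the classical limiting absorption principle for $R_0(\lambda) = (-\Delta - \lambda^2)^{-1}$, which provides continuity of the cutoff $\rho R_0(\lambda) \rho$ on $\ol{\mC}_+ \setminus \{0\}$ as a bounded operator on $L^2$, and then to transfer this to $R_q$ by the resolvent identity
\[
R_q(\lambda) = (I + R_0(\lambda) q)^{-1} R_0(\lambda).
\]
Compactness of $R_0(\lambda) q$ on the appropriate cutoff/weighted space together with analytic Fredholm theory makes $(I + R_0(\lambda) q)^{-1}$ well defined and continuous on $\ol{\mC}_+ \setminus i[0,r_0]$ once injectivity is checked: in $\mC_+ \setminus i(0,r_0]$ this follows from the spectral description above, while at real $\lambda \neq 0$ a nontrivial null vector would produce a nonzero outgoing solution of $Hu = \lambda^2 u$ that is $L^2$ outside $\supp q$, which is excluded by Rellich's uniqueness theorem (using that $q$ is real and compactly supported) together with unique continuation. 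The main obstacle is precisely this last step, in particular choosing the correct weighted $L^2$ setting for the compactness/Fredholm argument and invoking Rellich to eliminate real resonances; the rest is essentially bookkeeping with the spectral theorem.
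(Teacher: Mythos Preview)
Your proposal is correct and follows essentially the same route as the paper: self-adjointness of $H=-\Delta+q$ on $H^2(\mR^n)$, the inclusion $\mathrm{spec}(H)\subset[-r_0^2,\infty)$, the spectral bound $\norm{R_q(\lambda)}=1/\dist(\lambda^2,\mathrm{spec}(H))$ with an explicit lower bound on the distance for $\im(\lambda)\ge r>r_0$, and the limiting absorption principle for the continuous extension of $\rho R_q(\lambda)\rho$. The only difference is one of presentation: for the last part the paper simply cites the meromorphic continuation of the cutoff resolvent from \cite{Melrose, DyatlovZworski} and the location of its poles in $\ol{\mC}_+$, whereas you sketch the underlying Fredholm argument (free limiting absorption, the identity $R_q=(I+R_0 q)^{-1}R_0$, and Rellich plus unique continuation to exclude real resonances); your additional structural information on $\mathrm{spec}(H)$ (no positive embedded eigenvalues, finiteness of $F$) is not used in the paper for the first two claims but is exactly what is needed inside the cited references for the third.
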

\begin{proof}
The operator $-\Delta + q$, with domain $H^2(\mR^n)$, is a self-adjoint unbounded operator in $L^2(\mR^n)$ with spectrum contained in $[-r_0^2,\infty)$. If $\lambda \in \mC_+ \setminus i(0,r_0]$, then $\lambda^2$ is away from the spectrum, and one can choose $R_q(\lambda)$ to the be standard $L^2$ resolvent $(-\Delta+q-\lambda^2)^{-1}$. One has the estimate 
\[
\norm{R_q(\lambda)}_{L^2 \to L^2} \leq \frac{1}{\mathrm{dist}(\lambda^2, [-r_0^2, \infty))}.
\]
Writing $\lambda = \sigma + i\mu$, the range of $\sigma \mapsto (\sigma + i\mu)^2$ is a parabola opening to the right, so its distance from the spectrum is at least $2\sigma \mu$ when 
$\sigma^2 \geq \frac{1}{2}(\mu^2 - r_0^2)$ and at least $ \mu^2-\sigma^2-r_0^2$ when $\sigma^2 \leq \frac{1}{2}(\mu^2 - r_0^2)$. Thus, for $\im(\lambda) \geq r > r_0$, one has 
$\mathrm{dist}(\lambda^2, [-r_0^2,\infty)) \geq c > 0$ for some constant $c$ depending on $r$ and $r_0$ (in fact the distance is $\geq c (1+\abs{\sigma})$). It follows that 
\[
\norm{R_q(\lambda)}_{L^2 \to L^2} \leq C_{r,q}.
\]

The last statement follows from the meromorphic extension of the resolvent family from $\{ \im(\lambda) > 0 \}$ to $\mC$ (resp.\ a logarithmic cover of $\mC \setminus \{ 0 \}$) if $n$ is odd (resp.\ if $n$ is even), and from the fact that the only poles of this family in $\{ \im(\lambda) \geq 0 \}$ are in $i[0,r_0]$. See \cite{DyatlovZworski} for the case of odd dimensions, and \cite{Melrose} for the general case (note that \cite{Melrose} uses the opposite convention of extending the resolvent family from $\{ \im(\lambda) < 0 \}$). Here we only need the continuous extension of the resolvent family up to the real axis minus the origin (i.e.\ the limiting absorption principle), so we do not need to worry about the behaviour of the extension beyond the real axis.
\end{proof}

We also recall the following fact about Fourier-Laplace transforms.

\begin{Lemma} \label{lemma_fourier_laplace_inverse}
Suppose $F(z)$ is analytic on $\{ \im(z) > r \}$ for some $r \in \mR$ and
\[
\abs{F(z)} \leq C(1+\abs{z})^N e^{R \,\im(z)}, \qquad \text{for} ~ \im(z) > r,
\]
for some positive $R,C,N$ independent of $z$.
There exists an $f \in \mathcal{D}'(\mR)$ with $\supp(f) \subset [-R,\infty)$ and
$e^{-(\mu-r) t} f \in \mathcal{S}'(\mR)$, $(e^{-(\mu-r) t} f)\etilde(\,\cdot\,) = F(\,\cdot\,+i \mu)$,
for every $\mu>r$.
\end{Lemma}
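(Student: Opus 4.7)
The plan is to prove this as a Paley--Wiener type theorem for distributions supported on the half-line $[-R,\infty)$. For each $\mu>r$, the slice $g_\mu(\sigma):=F(\sigma+i\mu)$ has polynomial growth in $\sigma$ by hypothesis, hence defines a tempered distribution on $\mR$; let $h_\mu:=\breve{g}_\mu\in\mathcal{S}'(\mR)$ be its inverse Fourier transform in the paper's convention. My candidate for $f$ is $f:=e^{(\mu-r)t}h_\mu$, and the two key tasks are to show that this is independent of $\mu$ and that its support lies in $[-R,\infty)$.

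For the $\mu$-independence, I would pair with an arbitrary $\phi\in C^\infty_c(\mR)$. A direct calculation gives $\breve{(e^{\mu t}\phi)}(\sigma)=\frac{1}{2\pi}\tilde\phi(-\sigma-i\mu)$, and after the substitution $z=\sigma+i\mu$ one obtains
\[
\langle e^{\mu t}h_\mu,\phi\rangle=\frac{1}{2\pi}\int_{\im z=\mu}F(z)\,\tilde\phi(-z)\,dz.
\]
The integrand is holomorphic in $\{\im z>r\}$, and the Paley--Wiener bound $|\tilde\phi(-z)|\le C_k(1+|z|)^{-k}e^{M|\im z|}$ (with $\supp\phi\subset[-M,M]$) together with $|F(z)|\le C(1+|z|)^N e^{R\im z}$ yields the estimate $|F(z)\tilde\phi(-z)|\le C'(1+|z|)^{N-k}$ on any horizontal strip $\mu_1\le\im z\le\mu_2\subset(r,\infty)$. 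Taking $k>N+1$ makes the vertical sides of a Cauchy rectangle vanish at infinity, so contour shifting proves the claim. Thus $e^{\mu t}h_\mu$ is $\mu$-independent, and so is $f=e^{-rt}(e^{\mu t}h_\mu)$.

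The delicate step is the support statement. For $\phi\in C^\infty_c$ with $\supp\phi\subset(-\infty,-R-\delta)$ for some $\delta>0$, the same manipulation yields
\[
\langle f,\phi\rangle=\frac{1}{2\pi}\int_{\im z=\mu}F(z)\,\tilde\phi(-z+ir)\,dz,
\]
which is $\mu$-independent by the preceding step. The point is to exploit the sharper Paley--Wiener bound for this class of $\phi$: when $\im\zeta\le 0$ one has $|\tilde\phi(\zeta)|\le C_k(1+|\zeta|)^{-k}e^{R\im\zeta}$, so on the contour $z=\sigma+i\mu$
\[
|\tilde\phi(-z+ir)|\le C_k(1+|\sigma+i(\mu-r)|)^{-k}e^{-R(\mu-r)}.
\]
Combining this with the hypothesis on $F$, the exponential factors $e^{R\mu}$ and $e^{-R(\mu-r)}$ collapse to the $\mu$-independent constant $e^{Rr}$, and a direct integration in $\sigma$ gives $|\langle f,\phi\rangle|\lesssim e^{Rr}(1+\mu)^{N-k+1}$ for any $k>N+1$. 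Letting $\mu\to\infty$ forces $\langle f,\phi\rangle=0$, and hence $\supp f\subset[-R,\infty)$.

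The remaining conclusions are then routine. Because $f$ is supported in $[-R,\infty)$ and $e^{-(\mu-r)t}$ is smooth and bounded on $[-R,\infty)$ for $\mu>r$, the product $e^{-(\mu-r)t}f$ is a well-defined tempered distribution, and by construction $e^{-(\mu-r)t}f=h_\mu$, whose Fourier transform is $g_\mu=F(\,\cdot\,+i\mu)$. The principal obstacle is the careful bookkeeping of exponential weights in the support step, since the exponent $R$ in the hypothesis must exactly cancel the exponent that the Paley--Wiener estimate for $\tilde\phi$ supplies -- this matching is precisely what forces the support lower bound to coincide with the constant $R$ appearing in the growth condition.
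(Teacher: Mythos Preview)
Your argument is correct and self-contained, but it follows a genuinely different route from the paper's proof. The paper does not redo the Paley--Wiener analysis: instead it performs the change of variables $U(z):=e^{-iRz}F(-(z-ir))$ to reduce to an analytic function on the lower half-plane with polynomial bounds and \emph{no} exponential weight, then quotes the standard half-line Paley--Wiener result from \cite[Section~7.4]{Hormander} to produce a distribution $u$ supported in $[0,\infty)$, and finally sets $f(\cdot)=u(\cdot+R)$ and checks the Fourier identity by direct computation. In other words, the paper absorbs both the shift by $ir$ and the exponential factor $e^{R\,\im z}$ into a single substitution and outsources the contour-shifting and support arguments to H\"ormander.

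Your approach instead proves the half-line Paley--Wiener theorem from scratch: you build the candidate $f$ slice by slice, use a Cauchy rectangle to show $\mu$-independence, and obtain the support condition by pushing the contour to $\mu\to\infty$ and exploiting the exact cancellation between the growth $e^{R\mu}$ of $F$ and the decay $e^{-R(\mu-r)}$ of $\tilde\phi(-z+ir)$. This is longer but has the advantage of being self-contained and of making the role of the constant $R$ completely transparent. One small remark: the sentence ``because $f$ is supported in $[-R,\infty)$ and $e^{-(\mu-r)t}$ is bounded there, the product is tempered'' is not the right justification (half-line support plus a bounded multiplier does not by itself give $\mathcal{S}'$); the temperedness really comes from the identity $e^{-(\mu-r)t}f=h_\mu$ you state in the next clause, which holds by construction and already lives in $\mathcal{S}'$.
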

\begin{proof}
Here $\hat{f}(\lambda) = \tilde{f}(-\lambda)$ will be the Fourier transform of $f$ following the convention in 
\cite[Section 7.1]{Hormander}. Define
\[
U(z) := e^{-iRz} F(-(z-ir)), \qquad \im(z)<0;
\]
then $U(z)$ is analytic on $\{ \im(z) < 0 \}$ and, on this set,
\[
\abs{U(z)} \leq e^{R \,\im(z)} C (1+\abs{z-ir})^N e^{R(r-\im(z))} \leq C_{r,R,N} (1+\abs{z})^N
\]
for some $C,N$ independent of $z$. Hence, from \cite[Section 7.4]{Hormander}, there is a
 $u \in \mathcal{D}'(\mR)$ with $\supp(u) \subset [0,\infty)$ and 
$(e^{-\eta t} u)\ehat(\sigma) = U(\sigma-i\eta)$ for every $\eta > 0$, $\sigma \in \R$. 
Define $f \in \mathcal{D}'(\mR)$ by
\[
f(\cdot) := u(\,\cdot\, + R);
\]
then $\supp(f) \subset [-R,\infty)$ and, for every $\eta > 0$, we have 
\begin{align*}
(e^{-\eta t} f) \etilde (\sigma) &= (e^{-\eta t} u(\,\cdot\,+R))\ehat(-\sigma) = e^{R(\eta-i\sigma)} (e^{-\eta t} u)\ehat(-\sigma) \\
 &= e^{R(\eta-i\sigma)} U(-\sigma-i\eta) = F(\sigma + i(\eta+r)).
\end{align*}
The result follows by taking $\eta = \mu-r$ for any $\mu > r$.
\end{proof}

The next result gives a precise relation between the time domain and frequency domain measurements. We write $\langle u, \varphi \rangle$ for the distributional pairing of $u$ and $\varphi$.

\begin{Proposition} \label{prop_uq_psiq_relation}
Suppose $\omega \in S^{n-1}$ is fixed and $q \in C^{\infty}_c(\mR^n)$ is real valued and supported in $\ol{B}$.
Define $r_0 := \max(-\inf q, 0 )^{1/2}$ and
\[
\psi_q^s(\,\cdot\,,\lambda,\omega) := R_q(\lambda)(-q e^{i\lambda x \cdot \omega}), \qquad \lambda \in 
\mC_+ \setminus i(0,r_0].
\]
We have 
\[
\langle \,u_q(x,t,\omega), \,\varphi(x) \chi(t) \,\rangle_{\mR^n_x \times \mR_t} = \langle \,\psi_q^s(x,\sigma+i\mu, \omega), \,\varphi(x) (e^{\mu t} \chi)\ebreve(\sigma) \,\rangle_{\mR^n_x \times \mR_{\sigma}},
\]
for all $\mu > r_0$ and all $\varphi \in C^{\infty}_c(\mR^n)$,  $\chi \in C^{\infty}_c(\mR)$.
\end{Proposition}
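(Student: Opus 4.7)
The plan is to prove the identity first for $\mu$ sufficiently large, where the Fourier--Laplace transform of $u_q$ in $t$ converges as an ordinary integral and can be identified with $\psi_q^s$ via the uniqueness statement in Proposition \ref{prop_resolvent_meromorphic}, and then to extend it to all $\mu > r_0$ by a contour shift argument that exploits the holomorphy of the resolvent family.

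The preliminary step is a temporal growth estimate of the form
\[
\|u_q(\cdot,t)\|_{L^2(\R^n)} + \|\partial_t u_q(\cdot,t)\|_{L^2(\R^n)} \leq C(1+|t|)^{n/2} e^{\mu_0 t}, \qquad t \geq -1,
\]
for some $\mu_0 \geq r_0$ depending on $q$. This combines Proposition \ref{prop_uq_basic} (for the local bound on a compact time slab) with finite propagation speed (which confines $\supp u_q(\cdot,t)$ to a ball of radius $O(1+|t|)$, so that the spatial $L^2$ norm is finite in the first place) and the standard exponential energy estimate for $(\Box + q) u = 0$ to propagate bounds forward in $t$. Granted this, for each $\mu > \mu_0$, the Laplace transform
\[
V(x,\lambda) := \int_{\R} e^{i\lambda t} u_q(x,t)\,dt
\]
is holomorphic in $\lambda$ on $\{\im\lambda > \mu_0\}$ with values in $L^2(\R^n)$, and taking the Fourier transform in $t$ of $(\Box+q)u_q = -q\,\delta(t-x\cdot\omega)$ (the boundary terms of the two integrations by parts in $t$ vanish because $u_q \equiv 0$ for $t<-1$ and $e^{-\im\lambda\, t}$ dominates $e^{\mu_0 t}$ at $+\infty$) yields
\[
(-\Delta + q - \lambda^2) V(\cdot,\lambda) = -q\,e^{i\lambda x\cdot\omega} \text{ in } \R^n.
\]
Since $V(\cdot,\lambda)\in L^2(\R^n)$, the uniqueness clause of Proposition \ref{prop_resolvent_meromorphic} forces $V(\cdot,\lambda) = \psi_q^s(\cdot,\lambda,\omega)$ for $\im\lambda > \mu_0$. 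Applying the Parseval identity $\int f\,g\,dt = \int \tilde f(\sigma)\,\breve g(\sigma)\,d\sigma$ (a direct consequence of the conventions in the paper) to $f(t) := e^{-\mu t} u_q(x,t)$ and $g(t) := e^{\mu t}\chi(t)$, then multiplying by $\varphi(x)$ and invoking Fubini, produces the desired identity for every $\mu > \mu_0$.

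To descend to the range $r_0 < \mu \leq \mu_0$, I would recognize the right-hand side as
\[
I(\mu) = \int_{\im\lambda = \mu} \Phi(\lambda)\,\check\chi(\lambda)\,d\lambda, \qquad \Phi(\lambda) := \int_{\R^n} \varphi(x)\,\psi_q^s(x,\lambda,\omega)\,dx,
\]
where $\check\chi(\lambda) := \tfrac{1}{2\pi}\int_{\R} e^{-i\lambda t}\chi(t)\,dt$ is entire with Paley--Wiener decay $|\check\chi(\sigma+i\mu)| \lesssim_N (1+|\sigma|)^{-N}$ uniformly for $\mu$ in compact subsets of $\R$. By Proposition \ref{prop_resolvent_meromorphic}, $\Phi$ is holomorphic in $\lambda$ on $\C_+ \setminus i(0,r_0]$ and, combining $\|R_q(\lambda)\|_{L^2\to L^2} \leq C_r$ with $\|q e^{i\lambda x\cdot\omega}\|_{L^2} \lesssim e^{|\im\lambda|}$, uniformly bounded in $\re\lambda$ on each slab $\{\im\lambda \geq r_0 + \delta\}$. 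Thus $\Phi\check\chi$ is holomorphic on $\{\im\lambda > r_0\}$ with rapid decay in $\re\lambda$, and a Cauchy-theorem contour shift yields $I(\mu) = I(\mu')$ for all $\mu, \mu' \in (r_0,\infty)$. Combined with the identity for $\mu > \mu_0$, this completes the proof.

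The main obstacle I anticipate is the rigorous derivation of the temporal growth estimate: since $u_q$ only has a distributional trace across the characteristic hyperplane $\{t = x\cdot\omega\}$ and the forcing is a delta layer on this hyperplane, the energy argument must be set up using the characteristic-initial-data formulation of Proposition \ref{prop_uq_basic} (starting from the trace $-\tfrac{1}{2}\int_{-\infty}^z q(y,s)\,ds$ on $\Gamma$) and then propagated in $t$ by an energy identity for $(\Box+q)u = 0$ with $L^\infty$ potential; it is finite propagation speed, rather than any spatial integrability of the data, that makes the $L^2$ norm finite. Once this estimate is in place, Fubini, the differentiation under the integral in the Fourier variable, and the contour shift follow routinely from the resolvent bound in Proposition \ref{prop_resolvent_meromorphic} and Paley--Wiener decay.
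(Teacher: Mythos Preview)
Your proposal is correct in outline but takes the opposite route from the paper. The paper starts on the frequency side: it fixes $r>r_0$, uses the resolvent bound to show that $F_\varphi(\lambda)=\langle\psi_q^s(\cdot,\lambda,\omega),\varphi\rangle$ is holomorphic in $\{\im\lambda>r\}$ with growth $\lesssim e^{\im\lambda}$, invokes an inverse Fourier--Laplace lemma (the paper's Lemma \ref{lemma_fourier_laplace_inverse}) and the Schwartz kernel theorem to produce a distribution $v_q(x,t)$ supported in $\{t\ge -1\}$, then checks by direct computation that $(\Box+q)v_q=-q\,\delta(t-x\cdot\omega)$ and concludes $v_q=u_q$ by uniqueness of forward solutions of the wave equation. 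You instead start on the time side: bound $u_q$, Laplace--transform it, and identify the result with $\psi_q^s$ by the $L^2$-uniqueness in Proposition \ref{prop_resolvent_meromorphic}, then contour--shift to reach all $\mu>r_0$. The paper's route has the advantage that it never needs a temporal growth estimate for $u_q$; all the analytic input comes from the resolvent bound alone, and the range $\mu>r_0$ falls out directly since $r>r_0$ is arbitrary. Your route trades that for the extra preliminary estimate (which, as you note, is the main work), plus a separate contour shift.

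Two small points to tighten. First, your stated estimate $\|\partial_t u_q(\cdot,t)\|_{L^2}\lesssim(1+|t|)^{n/2}e^{\mu_0 t}$ is false as written: since $u_q=H(t-z)v$ with $v$ smooth, $\partial_t u_q(\cdot,t)$ carries a surface layer on $\{z=t\}$ and lies only in $H^{-1}(\mR^n)$. This does not hurt you, because only the bound on $\|u_q(\cdot,t)\|_{L^2}$ is actually needed; the passage from $(\Box+q)u_q=-q\,\delta(t-z)$ to $(-\Delta+q-\lambda^2)V=-qe^{i\lambda z}$ is cleanest done by noting that $e^{-\mu t}u_q\in\mathcal{S}'$ and taking the tempered Fourier transform in $t$, rather than by two literal integrations by parts. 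Second, in the Parseval step it is safer to pair with $\varphi$ first (so that $f(t)=\langle u_q(\cdot,t),\varphi\rangle$ is a scalar function with $|f(t)|\le\|\varphi\|_{L^2}\|u_q(\cdot,t)\|_{L^2}$) and then apply Parseval in $t$; the order you wrote requires a pointwise-in-$x$ temporal bound that you have not established.
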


\begin{Remark}
Recall that by the Schwartz kernel theorem, any distribution $u(x,t)$ on $\mR^n \times \mR$ is uniquely determined by the 
values of
$\langle \, u(x,t), \varphi(x) \chi(t) \,\rangle_{\mR^n_x \times \mR_t}$ as $\varphi$ varies over $ C^{\infty}_c(\mR^n)$ and 
$\chi$ varies over $C^{\infty}_c(\mR)$ (see \cite[proof of Theorem 5.1.1]{Hormander}). The relation in Proposition \ref{prop_uq_psiq_relation} may be formally interpreted as an inverse Laplace transform identity 
\[
u_q(x,t,\omega) = \frac{1}{2\pi} \int_{\im(\lambda) = \mu} e^{-i\lambda t} \psi_q^s(x,\lambda,\omega) \,d\lambda
\]
when $\mu > r_0$.
\end{Remark}

\begin{proof}[Proof of Proposition \ref{prop_uq_psiq_relation}]
Fix $r > r_0$. By Proposition \ref{prop_resolvent_meromorphic}, for $\im(\lambda) \geq r$, one has the estimates 
\begin{equation} \label{psiqs_ltwo_rough_estimates}
\norm{\psi_q^s(\,\cdot\,,\lambda,\omega)}_{L^2} \leq C_{r,q} \norm{q e^{-\im(\lambda) x \cdot \omega}}_{L^2} \leq C_{r,q} e^{\im(\lambda)}.
\end{equation}
For any $\varphi \in C^{\infty}_c(\mR^n)$, define 
\[
F_{\varphi}(\lambda) = \int_{\mR^n} \psi_q^s(x,\lambda,\omega) \varphi(x) \,dx \qquad \text{for } ~\im(\lambda) \geq r.
\]
By Proposition \ref{prop_resolvent_meromorphic}, $F_{\varphi}$ is analytic on $\{ \im(\lambda) > r \}$ and 
\[
\abs{F_{\varphi}(\lambda)} \leq C_{r,q} e^{\im(\lambda)} \norm{\varphi}_{L^2}, \qquad \im(\lambda) \geq r.
\]
Using Lemma \ref{lemma_fourier_laplace_inverse}, there is a $f_{\varphi} \in \mathcal{D}'(\mR)$ with
 $\supp(f_{\varphi}) \subset [-1,\infty)$ and $(e^{-(\mu-r) t} f_{\varphi})\etilde(\,\cdot\,) = F_{\varphi}(\,\cdot\,+i \mu)$ 
 for every $\mu > r$. This means that 
\[
\langle e^{-(\mu-r)t} f_{\varphi}, \chi \rangle = \langle F_{\varphi}(\,\cdot\,+i\mu), \breve{\chi} \rangle, \qquad \chi \in C^{\infty}_c(\mR).
\]

Now, given $\mu > r$, define the linear map 
\[
\mathcal{K}: C^{\infty}_c(\mR^n) \to \mathcal{D}'(\mR), \ \ \mathcal{K} \varphi = e^{-(\mu-r) t} f_{\varphi}.
\]
If $\varphi_j \to 0$ in $C^{\infty}_c(\mR^n)$, then $F_{\varphi_j}(\lambda) \to 0$ when $\im(\lambda) \geq r$, which implies that 
\[
\langle e^{-(\mu-r) t} f_{\varphi_j}, \chi \rangle_{\mR_t} = \langle F_{\varphi_j}(\,\cdot\,+i \mu), \breve{\chi} \rangle_{\mR} \to 0
\]
as $j \to \infty$ for any fixed $\chi \in C^{\infty}_c(\mR)$. Thus $\mathcal{K}$ is continuous, and by the Schwartz kernel theorem \cite[Theorem 5.2.1]{Hormander} there is a unique $K \in \mathcal{D}'(\mR^n \times \mR)$ so that 
\begin{align} \label{k_varphi_chi_formula}
\langle K, \varphi(x) \chi(t) \rangle &= \langle \mathcal{K}\varphi, \chi \rangle = \langle e^{-(\mu-r) t} f_{\varphi}, \chi \rangle = \langle F_{\varphi}(\,\cdot\,+i\mu), \breve{\chi} \rangle \notag \\
 &= \langle \psi_q^s(x, \sigma+i\mu, \omega), \varphi(x) \breve{\chi}(\sigma) \rangle_{\mR^n_x \times \mR_{\sigma}}.
\end{align}
Since $f_{\varphi}$ is supported in $[-1,\infty)$, it follows that $K$ is supported in $\{ t \geq -1 \}$. We define 
\[
v_q(x,t) = e^{\mu t} K(x,t) \in \mathcal{D}'(\mR^n \times \mR).
\]
Then also $v_q$ is supported in $\{ t \geq -1 \}$.

If we show that 
\begin{equation} \label{boxqvq_equation}
(\Box + q) v_q = -q \delta(t-x\cdot \omega) \text{ in $\mR^n \times \mR$},
\end{equation}
uniqueness of distributional solutions of the wave equation supported in $\{ t \geq -1 \}$ (see e.g.\ \cite[Theorem 23.2.7]{Hormander}) implies that $u_q = v_q$, so 
\begin{align*}
\langle u_q, \varphi(x) \chi(t) \rangle &= \langle K, \varphi(x) e^{\mu t} \chi(t) \rangle \\
 &= \langle \psi_q^s(x, \sigma+i\mu, \omega), \varphi(x) (e^{\mu t} \chi)\ebreve(\sigma) \rangle_{\mR^n_x \times \mR_{\sigma}},
\end{align*}
proving the proposition.

To show \eqref{boxqvq_equation}, we first use \eqref{k_varphi_chi_formula} to see that 
\begin{align*}
\langle \p_t^j K, \varphi(x) \chi(t) \rangle &= \langle K, \varphi(x) (-\p_t)^j \chi(t) \rangle \\
 &= \langle (-i \sigma)^j \psi_q^s(x, \sigma+i\mu, \omega), \varphi(x) \breve{\chi}(\sigma) \rangle_{\mR^n_x \times \mR_{\sigma}}.
\end{align*}
Similarly 
\[
\langle \Delta_x K, \varphi(x) \chi(t) \rangle = \langle \Delta_x \psi_q^s(x, \sigma+i\mu, \omega), \varphi(x) \breve{\chi}(\sigma) \rangle_{\mR^n_x \times \mR_{\sigma}}
\]
and 
\[
\langle q(x) K, \varphi(x) \chi(t) \rangle = \langle  q(x) \psi_q^s(x, \sigma+i\mu, \omega), \varphi(x) \breve{\chi}(\sigma) \rangle_{\mR^n_x \times \mR_{\sigma}}
\]
Thus, since $v_q = e^{\mu t} K$, we obtain that 
\begin{align*}
 &\langle (\p_t^2 - \Delta_x + q) v_q, \varphi(x) \chi(t) \rangle \\
 & \hspace{20pt}= \langle (\p_t^2 +2\mu \p_t + \mu^2 - \Delta_x + q) K, \varphi(x) e^{\mu t} \chi(t) \rangle \\
 & \hspace{20pt}= \langle (-\Delta_x + q - (\sigma+i\mu)^2) \psi_q^s(x, \sigma+i\mu, \omega), \varphi(x) (e^{\mu t} \chi)\ebreve(\sigma) \rangle_{\mR^n_x \times \mR_{\sigma}} \\
 & \hspace{20pt}= \langle -q(x) e^{i(\sigma+i\mu)x \cdot \omega},  \varphi(x) (e^{\mu t} \chi)\ebreve(\sigma) \rangle_{\mR^n_x \times \mR_{\sigma}} \\
 & \hspace{20pt}= \langle -q(x) e^{-\mu x \cdot \omega} \delta(t-x \cdot \omega),  \varphi(x) e^{\mu t} \chi(t)  \rangle \\
 & \hspace{20pt}= \langle -q(x) \delta(t-x \cdot \omega),  \varphi(x) \chi(t)  \rangle.
\end{align*}
This proves \eqref{boxqvq_equation}.
\end{proof}

It is now easy to complete the reduction from the scattering amplitude to time domain measurements.

\begin{proof}[Proof of Theorem \ref{thm_frequencydomain_reduction}]
Let $r_0 = \max(-\inf q_1, -\inf q_2, 0 )^{1/2}$. Proposition \ref{prop_resolvent_meromorphic} states that the resolvents $R_{q_j}(\lambda)$ are well defined for $\lambda \in \mC_+ \setminus i(0,r_0]$, and thus for such $\lambda$ one may define 
\begin{align*}
\psi_{q_j}^s(\,\cdot\,,\lambda,\omega) = R_{q_j}(\lambda)(-q_j e^{i\lambda x \cdot \omega}).
\end{align*}
By Proposition \ref{prop_resolvent_meromorphic}, the map $\lambda \mapsto \psi_{q_j}^s(\,\cdot\,,\lambda,\omega)$ extends continuously as a map $\ol{\mC}_+ \setminus i[0,r_0] \to L^2_{\mathrm{loc}}(\mR^n)$ (this is the limiting absorption principle, see e.g.\ \cite[Section 6.2]{Yafaev}). By \cite[Section 6.7]{Yafaev}, for any $\lambda > 0$ the limit satisfies 
\begin{equation} \label{psiq_asymptotics}
\psi_{q_j}^s(r\theta,\lambda,\omega) = e^{i\lambda r} r^{-\frac{n-1}{2}} a_{q_j}(\lambda, \theta, \omega) + o(r^{-\frac{n-1}{2}}), \qquad r \to \infty.
\end{equation}

Assume first  that $a_{q_1}(\lambda,\theta,\omega) = a_{q_2}(\lambda,\theta,\omega)$ for all $\lambda \geq \lambda_0$ and all $\theta$. Together with the fact that $q_1$ and $q_2$ vanish outside $\ol{B}$, this implies that for any fixed $\lambda \geq \lambda_0$, the function $\psi_{q_1}^s - \psi_{q_2}^s$ satisfies 
\begin{align*}
(-\Delta-\lambda^2) ((\psi_{q_1}^s - \psi_{q_2}^s)(\,\cdot\,,\lambda,\omega)) &= 0 \text{ in $\mR^n \setminus \ol{B}$}, \\
(\psi_{q_1}^s - \psi_{q_2}^s)(x,\lambda,\omega) &= o(\abs{x}^{-\frac{n-1}{2}}) \text{ as $\abs{x} \to \infty$.}
\end{align*}
The Rellich uniqueness theorem (see e.g.\ \cite{Hormander_rellich}) implies that $\psi_{q_1}^s - \psi_{q_2}^s$ vanishes outside $\ol{B}$. In particular, for any $\varphi \in C^{\infty}_c(\mR^n \setminus \ol{B})$, the function 
\[
w_{\varphi}(\lambda) = \langle (\psi_{q_1}^s - \psi_{q_2}^s)(\,\cdot\,,\lambda,\omega), \varphi \rangle
\]
satisfies 
\[
w_{\varphi}|_{[\lambda_0,\infty)} = 0.
\]
However, by Proposition \ref{prop_resolvent_meromorphic} the function $\lambda \mapsto w_{\varphi}(\lambda)$ is holomorphic in $\mC_+ \setminus i(0,r_0]$ and has a continuous extension to $\ol{\mC}_+ \setminus i[0,r_0]$. Since it vanishes on $[\lambda_0,\infty)$, one must have $w_{\varphi}(\lambda) \equiv 0$. In particular, for any $\mu > r_0$ one has 
\[
\langle (\psi_{q_1}^s - \psi_{q_2}^s)(x,\sigma+i\mu,\omega), \varphi(x) \rangle_{\mR^n_x} = 0, \qquad \sigma \in \mR.
\]
The relation in Proposition \ref{prop_uq_psiq_relation} then implies that 
\[
\langle u_{q_1}(x,t,\omega) - u_{q_2}(x,t,\omega), \varphi(x) \chi(t) \rangle_{\mR^n_x \times \mR_t} = 0
\]
for all $\varphi \in C^{\infty}_c(\mR^n \setminus \ol{B})$ and $\chi \in C^{\infty}_c(\mR)$. This means that 
\[
u_{q_1}(x,t,\omega) = u_{q_2}(x,t,\omega), \qquad (x,t) \in (\mR^n \setminus \ol{B}) \times \mR.
\]
In particular, one has $u_{q_1}(x,t,\omega) = u_{q_2}(x,t,\omega)$ for $(x, t) \in (S \times \mR) \cap \{ t \geq x \cdot \omega \}$ as required.

Let us now prove the converse. Assume for simplicity that $\omega = e_n$, and assume that $u_{q_1}(x,t,e_n) = u_{q_2}(x,t,e_n)$ for $(x, t) \in (S \times \mR) \cap \{ t \geq z \}$. By Proposition \ref{prop_uq_basic}, the function $\alpha := u_{q_1} - u_{q_2}$ solves 
\begin{align*}
\Box \alpha &= 0 \qquad \text{in $\{ (x,t) \,;\, \abs{x} > 1 \text{ and } t > z\}$}, \\
\alpha(y,z,z) &= -\frac{1}{2} \int_{-\infty}^z (q_1-q_2)(y,s) \,ds \text{ on $\{ \abs{x} > 1\}$}, \\
\alpha &=0 \qquad \text{in $\{ z < t < -1 \}$}.
\end{align*}
Moreover, $\alpha|_{(S \times \mR) \cap \{ t > z \}} = 0$. Thus by Lemma \ref{lemma_energy_estimate_exterior} one also has $\p_{\nu} \alpha|_{(S \times \mR) \cap \{ t > z \}} = 0$. Now the Cauchy data of $\alpha$ vanishes on the lateral boundary of the set $\{ (x,t) \,;\, \abs{x} \geq 1 \text{ and } t \geq z\}$, and Holmgren's uniqueness theorem applied in this set shows that $\alpha$ is identically zero in the relevant domain of dependence. However, by finite speed of propagation the support of $\alpha$ is contained in the same domain of dependence. Thus $\alpha$ is identically zero in $\{ (x,t) \,;\, \abs{x} \geq 1 \text{ and } t \geq z\}$, which implies that 
\[
u_{q_1}(x,t,e_n) = u_{q_2}(x,t,e_n), \qquad (x,t) \in (\mR^n \setminus \ol{B}) \times \mR.
\]
The relation in Proposition \ref{prop_uq_psiq_relation} now gives that for any $\mu > r_0$ and for any $\varphi \in C^{\infty}_c(\mR^n \setminus \ol{B})$, 
\[
\langle (\psi_{q_1}^s - \psi_{q_2}^s)(x,\sigma+i\mu,e_n), \varphi(x) \rangle_{\mR^n_x} = 0, \qquad \sigma \in \mR.
\]
Since by Proposition \ref{prop_resolvent_meromorphic} the function $\lambda \mapsto \langle (\psi_{q_1}^s - \psi_{q_2}^s)(\,\cdot\,,\lambda,e_n), \varphi \rangle$ is holomorphic in $\mC_+ \setminus i(0,r_0]$ and has a continuous extension to $\ol{\mC}_+ \setminus i[0,r_0]$, it follows that 
\[
\langle (\psi_{q_1}^s - \psi_{q_2}^s)(x,\lambda,e_n), \varphi(x) \rangle_{\mR^n_x} = 0, \qquad \lambda > 0.
\]
Thus $\psi_{q_1}^s(\,\cdot\,,\lambda,e_n) - \psi_{q_2}^s(\,\cdot\,,\lambda,e_n)$ vanishes outside $\ol{B}$ for any $\lambda > 0$. By the asymptotics given in \eqref{psiq_asymptotics}, this implies that $a_{q_1}(\lambda,\theta,e_n) = a_{q_2}(\lambda,\theta,e_n)$ for all $\lambda > 0$ and $\theta \in S^{n-1}$ as required.
\end{proof}

\appendix

\section{Carleman estimates for second order PDEs} \label{sec_carleman_estimate}

This exposition of the statement and the derivation of Carleman estimates with boundary terms for second order operators
with real coefficients is based mostly on Chapter 4 of \cite{Ta99} and Chapter VIII of \cite{Ho76}. What is new here is the 
explicit expression for
the boundary terms and perhaps our explanations are not as terse as in \cite{Ta99}.

\subsection{The Carleman estimate}

We use the following notation in this exposition. For complex valued functions $f(x)$ on $\R^n$,
$f_j = \pa_j f = \frac{\pa f}{\pa x_j}$, $\pa f = (\pa_1 f, \cdots, \pa_n f)$,  $D_j f = \frac{1}{i} \pa_j f$ and 
$S = \{ (\xi, \sigma) \in \R^n \times \R : |\xi|^2 + \sigma^2 =1 \}$. Further,
 $\Omega$ will represent a bounded open subset of $\R^n$ with Lipschitz boundary and 
\[
P(x,D) = \sum_{j=1}^n \sum_{k=1}^n a^{jk}(x) D_j D_k + \sum_{j=1}^n b^j(x) D_j + c(x) 
\]
will be a second order operator with $a^{jk}=a^{kj}$ being real valued functions in $C^1(\Omb)$,
and $b^j$, $c$ are bounded complex valued 
functions on $\Omb$. We often drop the summation symbol when it is clear from the context that a summation is involved.
The principal symbol of $P(x,D)$ is the function
\[
p(x,\xi) = a^{jk}(x) \xi_j \xi_k, \qquad x \in \Omb, ~ \xi \in \R^n;
\]
note that the double summation over $j,k$ is implied in the above definition.

For differentiable functions $p(x,\xi)$ and $q(x,\xi)$ on $\Omb \times \R^n$, we define their Poisson bracket as
\[
\{p,q\} = \sum_{j=1}^n \frac{\pa p}{\pa \xi_j} \frac{ \pa q}{\pa x_j} -  \frac{\pa p}{\pa x_j} \frac{ \pa q}{\pa \xi_j}
\]

\begin{Definition} 
Suppose $\phi(x)$ is a real valued smooth function on $\Omb$ with $(\pa \phi)(x) \neq 0$ at each point $x \in \Omb$.
The level surfaces of $\phi$ are said to be pseudoconvex with respect to $P(x,D)$ on $\Omb$ if
\beqn
\{p, \{p, \phi\}\}(x,\xi) >0
\label{eq:pseudo}
\eeqn
for all $ x \in \Omb$ and all non-zero $\xi \in \R^n$ satisfying
\beqn
p(x, \xi)=0, ~~ \{p, \phi\}(x,\xi)=0.
\label{eq:pseudocond}
\eeqn
\end{Definition}
\begin{Definition} 
Suppose $\phi(x)$ is a real valued smooth function on $\Omb$ with $(\pa \phi)(x) \neq 0$ at each point $x \in \Omb$.
The level surfaces of $\phi$ are said to be strongly pseudoconvex with respect
to $P(x,D)$ on $\Omb$ if the level surfaces of $\phi$ are pseudoconvex and
\beqn
\frac{1}{i \sigma} \{ \overline{ p(x, \zeta) }, p(x, \zeta) \} >0
\label{eq:strong}
\eeqn
for all $ x \in \Omb$ and all $\zeta = \xi + i \sigma \pa
\phi(x)$, $\xi \in \R^n$, $\sigma \neq 0$, satisfying
\beqn
p(x, \zeta)=0, ~~ \{ p(x, \zeta), \phi(x) \} = 0.
\label{eq:strongcond}
\eeqn
\end{Definition}
The following proposition (Theorem 1.8 in \cite{Ta99}) is useful in constructing weights for Carleman estimates.
\begin{Proposition}\label{prop:level}
Suppose $\Omega$ a bounded open subset of $\R^n$ with Lipschitz boundary, 
$P(x,D)$ is a second order differential operator  on $\Omb$
with the principal part having real coefficients, and $\phi$ is a real valued smooth function on $\Omb$ with $\pa \phi$ never zero
on $\Omb$. The level surfaces of $\phi$ are strongly pseudoconvex on $\Omb$ iff they are pseudoconvex on $\Omb$.
\end{Proposition}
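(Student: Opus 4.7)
The implication "strongly pseudoconvex $\Rightarrow$ pseudoconvex" is built into the definition, so my plan is to prove the converse: given pseudoconvexity, I verify the commutator positivity \eqref{eq:strong} at an arbitrary point $(x, \zeta)$ with $\zeta = \xi + i\sigma\, \pa\phi(x)$, $\sigma \neq 0$, satisfying \eqref{eq:strongcond}. Write $\eta := \pa\phi(x)$.

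The first step is to translate the constraints \eqref{eq:strongcond} into real-variable identities. Since $p$ is a quadratic polynomial in $\xi$ with real coefficients,
\[
p(x,\zeta) = A(x,\xi,\sigma) + i B(x,\xi,\sigma), \qquad A := p(x,\xi) - \sigma^2 p(x,\eta), \quad B := \sigma \{p,\phi\}(x,\xi),
\]
and similarly $\{p,\phi\}(x,\zeta) = \{p,\phi\}(x,\xi) + 2i\sigma\, p(x,\eta)$. Separating real and imaginary parts in \eqref{eq:strongcond} forces
\[
p(x,\xi) = 0, \qquad p(x,\eta) = 0, \qquad \{p,\phi\}(x,\xi) = 0,
\]
to which I can append $\{p,\phi\}(x,\eta) = 2 p(x,\eta) = 0$ by Euler's identity. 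Hence both $(x,\xi)$ (when $\xi \neq 0$) and $(x,\eta)$ (which is nonzero because $\pa\phi$ never vanishes on $\Omb$) satisfy the pseudoconvexity hypothesis \eqref{eq:pseudocond}.

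Next, the identity $\{\bar p, p\} = 2i\{A,B\}$ (immediate from $\{A,A\} = \{B,B\} = 0$) reduces \eqref{eq:strong} to positivity of $\tfrac{2}{\sigma}\{A,B\}$. Expanding bilinearly,
\[
\{A,B\} = \sigma\,\{p(\cdot,\xi),\{p,\phi\}(\cdot,\xi)\} - \sigma^3\,\{p(\cdot,\eta(\cdot)),\{p,\phi\}(\cdot,\xi)\},
\]
with both Poisson brackets taken in the $(x,\xi)$ variables. The second bracket is in fact $\xi$-independent, because $\pa_{\xi_j}\{p,\phi\}(x,\xi) = 2\sum_k a^{jk}(x)\eta_k = (\pa_{\xi_j}p)(x,\eta)$ does not depend on $\xi$ for quadratic $p$. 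A short calculation using this together with the Euler identity $\sum_k (\pa_{x_j}\pa_{\xi_k}p)(x,\eta)\eta_k = 2(\pa_{x_j}p)(x,\eta)$ evaluates that bracket to $-\{p,\{p,\phi\}\}(x,\eta)$. The upshot is the clean algebraic identity, valid for all $(x,\xi,\sigma)$ and without any constraint,
\[
\tfrac{1}{\sigma}\{A,B\}(x,\xi,\sigma) = \{p,\{p,\phi\}\}(x,\xi) + \sigma^2\,\{p,\{p,\phi\}\}(x,\eta).
\]
At the constraint points both summands are strictly positive by pseudoconvexity, so
\[
\tfrac{1}{i\sigma}\{\bar p, p\}(x,\zeta) = 2\,\{p,\{p,\phi\}\}(x,\xi) + 2\sigma^2\,\{p,\{p,\phi\}\}(x,\eta) > 0.
\]
The potentially degenerate case $\xi = 0$ is handled automatically: only the second summand survives, and it is positive by pseudoconvexity applied at $(x,\eta)$.

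The main obstacle will be the bookkeeping in establishing the clean identity for $\tfrac{1}{\sigma}\{A,B\}$, which requires carefully cancelling terms involving derivatives of the map $x \mapsto \eta(x) = \pa\phi(x)$ via Euler's identity. Once that formula is in place, positivity reduces immediately to applying the pseudoconvexity assumption at the two natural real covectors $\xi$ and $\eta$.
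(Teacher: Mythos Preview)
The paper does not actually prove Proposition~\ref{prop:level}; it cites it as Theorem~1.8 in \cite{Ta99}. So there is no ``paper's own proof'' to compare against directly.

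Your argument is correct. The key algebraic identity you derive,
\[
\tfrac{1}{2i\sigma}\{\overline{p(x,\zeta)},p(x,\zeta)\} = \{p,\{p,\phi\}\}(x,\xi) + \sigma^2\,\{p,\{p,\phi\}\}(x,\pa\phi),
\]
is in fact exactly the computation the paper carries out later in the proof of Lemma~\ref{lemma:positive} (see the derivation leading to and including \eqref{eq:temp1c}). Your observation that, under the constraints \eqref{eq:strongcond}, both $(x,\xi)$ (if $\xi\neq 0$) and $(x,\pa\phi)$ satisfy the real pseudoconvexity hypothesis \eqref{eq:pseudocond}, and that the $\xi=0$ case is harmless because the first summand vanishes while the second stays strictly positive, is the right way to close the argument. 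In effect you have extracted from the paper's Lemma~\ref{lemma:positive} computation precisely what is needed to supply the proof the paper omitted.
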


We prove the Carleman estimates for weights $\phi $ which satisfy the strong pseudoconvexity condition defined below.
\begin{Definition}
Suppose $\phi(x)$ is a real valued smooth function on $\Omb$ with $(\pa \phi)(x) \neq 0$ at each point $x \in \Omb$.
We say that $\phi$ is strongly pseudoconvex on $\Omb$ with respect to $P(x,D)$ if for all $x \in \Omb$ and all 
$\xi \in \R^n$ we have
\beqn
\{p, \{p, \phi\}\}(x,\xi) >0, \qquad \text{when} ~ p(x, \xi)=\{p, \phi\}(x,\xi)=0, ~ \xi \neq 0,
\eeqn
and
\beqn
\frac{1}{i \sigma} \{ \overline{ p(x, \zeta) }, p(x, \zeta) \} >0, \qquad 
\text{when} ~ p(x,\zeta)=0, ~ \zeta=\xi + i\sigma \pa \phi(x), ~\sigma \neq 0.
\eeqn
\end{Definition}
Note that we make a 
distinction between the phrases ``level surfaces of $\phi$ are strongly pseudoconvex'' and ``$\phi$ is strongly pseudoconvex''.
If $\phi$ is strongly pseudoconvex w.r.t $P(x,D)$ on $\Omb$ then the level surfaces of $\phi$ are clearly strongly pseudoconvex 
w.r.t $P(x,D)$ on $\Omb$, but the converse is not true. However, $\phi$ needs to be strongly pseudoconvex for Carleman estimates
to hold. The following proposition (\cite{Ho76}, Theorem 8.6.3) is useful in constructing strongly pseudoconvex weights.
\begin{Proposition}\label{prop:special}
Suppose $\Omega$ a bounded open subset of $\R^n$ with Lipschitz boundary, 
$P(x,D)$ is a second order differential operator  on $\Omb$
with the principal part having real coefficients, and $\psi$ is a real valued function in $C^1(\Omb)$ with 
$\pa \psi$ never zero
on $\Omb$. If the level surfaces of $\psi$ are strongly pseudoconvex with respect to
$P(x,D)$ on $\Omb$, then for large enough real $\lambda$,
$\phi = e^{\lambda \psi}$ is strongly pseudoconvex with respect to $P(x,D)$ on $\Omb$.
\end{Proposition}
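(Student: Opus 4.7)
The plan is to verify both conditions in the definition of ``$\phi$ is strongly pseudoconvex with respect to $P(x,D)$'' when $\phi = e^{\lambda\psi}$ and $\lambda > 0$ is chosen sufficiently large. The calculations rest on the identities
\[
\partial_j \phi = \lambda \phi\, \psi_j, \qquad \partial_j \partial_k \phi = \lambda \phi\, \psi_{jk} + \lambda^2 \phi\, \psi_j \psi_k,
\]
so in particular $\partial\phi$ is a positive multiple of $\partial\psi$ (hence never zero on $\Omb$) and the level surfaces of $\phi$ and $\psi$ coincide.

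For the first pseudoconvexity condition, the Leibniz rule for Poisson brackets gives $\{p, \phi\} = \lambda \phi\, \{p, \psi\}$ and consequently
\[
\{p, \{p, \phi\}\} = \lambda\phi\,\{p, \{p, \psi\}\} + \lambda^2\phi\,\{p, \psi\}^2.
\]
Since $\lambda\phi > 0$, the zero set of $\{p, \phi\}$ equals that of $\{p, \psi\}$, and on this common zero set one has $\{p, \{p, \phi\}\} = \lambda\phi\, \{p, \{p, \psi\}\}$. Strict positivity at points where additionally $p(x, \xi) = 0$ and $\xi \neq 0$ is then immediate from the pseudoconvexity of the level surfaces of $\psi$ (the hypothesis together with Proposition \ref{prop:level}); this works for every $\lambda > 0$.

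The second condition is where the exponential weight is essential. Set $\tau := \sigma\lambda\phi(x)$, so $\zeta = \xi + i\sigma\partial\phi(x) = \xi + i\tau\partial\psi(x)$. The expansion
\[
p(x, \zeta) = p(x, \xi) - \tau^2 p(x, \partial\psi) + i\tau\,\{p, \psi\}(x, \xi)
\]
shows that $p(x,\zeta) = 0$ is equivalent to $p(x, \xi) = \tau^2 p(x, \partial\psi)$ together with $\{p,\psi\}(x, \xi) = 0$. I would then compute $\frac{1}{i\sigma}\{\bar p, p\}(x, \zeta)$ and expand it as a polynomial in $\lambda$ (for fixed $\sigma, \xi, x$). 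The ``rank-one'' piece $\lambda^2\phi\,\psi_j\psi_k$ of the Hessian of $\phi$ contributes a term of order $\lambda^2$ which, when evaluated on the constraint set $p(x,\zeta) = 0$, reduces to a nonnegative expression that is strictly positive whenever $\sigma \neq 0$ and $\partial\psi \neq 0$; the remaining terms are of strictly lower order in $\lambda$ and, thanks to the imaginary-part constraint $\{p,\psi\}(x,\xi)=0$, are controlled by the strong pseudoconvexity of the level surfaces of $\psi$. Taking $\lambda$ large enough so that the $\lambda^2$ contribution absorbs any sign-indefinite remainder yields the required strict positivity.

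The main obstacle is this last step: one must carefully isolate the $\lambda^2$-leading part of $\frac{1}{i\sigma}\{\bar p, p\}(x, \zeta)$ on $\{p(x,\zeta)=0,\ \sigma\neq 0\}$ and verify its strict positivity. This is precisely the gap between the level-surface version of condition 2 (which carries the extra constraint $\{p, \psi\}(x, \zeta) = 0$) and the function-pseudoconvexity version (with no such constraint), and the exponential is designed to bridge it. The calculation is the classical one behind \cite[Theorem 8.6.3]{Ho76}; the real novelty of this appendix is not this proposition but the subsequent Carleman estimate with explicit boundary terms, which is what the main body of the paper actually uses.
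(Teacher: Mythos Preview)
The paper does not prove Proposition~\ref{prop:special}; it merely states it and cites \cite[Theorem~8.6.3]{Ho76}. So there is no ``paper's own proof'' to compare against, and your closing remark that the real content of the appendix lies in the Carleman estimate with explicit boundary terms is exactly right.

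That said, your sketch of condition~2 mischaracterizes the mechanism. Using $\phi_{jk}=\lambda\phi\,\psi_{jk}+\lambda^2\phi\,\psi_j\psi_k$ in \eqref{eq:strong1} and writing $\tau=\sigma\lambda\phi$ (so $\zeta=\xi+i\tau\,\partial\psi$), one finds the clean identity
\[
\frac{1}{i\sigma}\{\bar p,p\}(x,\zeta)
=\lambda\phi\left[\ \frac{1}{i\tau}\{\bar p,p\}(x,\zeta)\Big|_{\text{computed with }\psi}\ +\ \lambda\,\big|\{p,\psi\}(x,\zeta)\big|^2\ \right].
\]
The ``leading'' term $\lambda\,|\{p,\psi\}(x,\zeta)|^2$ is \emph{not} strictly positive on $\{p(x,\zeta)=0,\ \sigma\neq 0\}$ as you asserted: it vanishes precisely when $\{p,\psi\}(x,\zeta)=0$ (for instance whenever $p(x,\partial\psi)=0$, since on $p(x,\zeta)=0$ one has $\{p,\psi\}(x,\zeta)=2i\tau\,p(x,\partial\psi)$). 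The point is complementary: on that vanishing set the bracketed $\psi$-term is strictly positive by the \emph{hypothesis} of strong pseudoconvexity of the level surfaces of $\psi$, while off that set the square term is strictly positive and, after a homogeneity/compactness argument on $\Omb\times S$, dominates the possibly sign-indefinite $\psi$-term once $\lambda$ is large. Your condition~1 argument is correct for every $\lambda>0$.
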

It is often easier to construct suitable functions whose level surfaces are 
pseudoconvex, than to directly construct functions which are strongly pseudoconvex. However, Carleman estimates require strongly pseudoconvex 
functions. So one first
constructs a useful function $\psi$ whose level surfaces are pseudoconvex. Then, by Proposition \ref{prop:level},
 the level surfaces of $\psi$ are
strongly pseudoconvex and hence, by Proposition \ref{prop:special}, $\phi= e^{\lambda \psi}$
is strongly pseudoconvex for large enough $\lambda$. Further, $\psi$ and $\phi$ have the same level surfaces.

In verifying pseudoconvexity of level surfaces of $\phi$, it is useful to have explicit expressions for (\ref{eq:pseudo})
and (\ref{eq:strong}). These are available in \cite{Ho76} and one has
\begin{align}
\{p, \{p,\psi \} \} 
& = \psi_{jk} \, \frac{ \pa p}{\pa \xi_j} \, \frac{ \pa p}{\pa \xi_k}
+ \left ( \frac{ \pa p_k}{\pa \xi_j} \, \frac{ \pa p}{\pa \xi_k} - p_k
\frac{ \pa^2 p}{\pa \xi_j \pa \xi_k} \right ) \psi_j
\label{eq:pseudo1}
\\
\frac{1}{i \sigma} \{ \overline{ p(x, \zeta) }, p(x, \zeta) \}
& =
\psi_{jk}(x) \, \frac{ \pa p}{\pa \xi_j}(x, \zeta) \, \overline{
\frac{ \pa p}{\pa \xi_k}(x,\zeta) }
+ \sigma^{-1} \text{Im} \left ( p_k (x,\zeta) \,
\overline{\frac{ \pa p}{\pa \xi_k}(x,\zeta)} \right ).
\label{eq:strong1}
\end{align}

The strong pseudoconvexity of $\phi$ may be expressed as a positive definiteness condition which will be useful when proving Carleman estimates.
\begin{Lemma}\label{lemma:positive}
If $\phi$ is strongly pseudoconvex w.r.t $P(x,D)$ on $\Omb$ then there is a constant $c>0$ such that for
$\zeta = \xi + i \sigma \pa \phi$ we have
\beqn
\frac{1}{i \sigma} \{ \overline{ p(x, \zeta) }, p(x, \zeta) \}
\geq c , \qquad \text{for } (x, \xi, \sigma) \in \Omb \times S ~~ \text{with} ~ 
p(x, \xi)-\sigma^2 p(x,\pa \phi) = \{ p, \phi \}(x,\xi) =0.
\label{eq:uni-pseudo}
\eeqn
Here, the value of the LHS, when $\sigma=0$, is to be understood in the sense of a limit as $\sigma \to 0$.
\end{Lemma}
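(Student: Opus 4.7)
My plan is a direct compactness argument on $\Omb \times S$. First I will explicitly split $p(x,\zeta)$ into real and imaginary parts. Since the $a^{jk}(x)$ are real and symmetric and $\phi$ depends only on $x$,
\[
p(x,\zeta) = a^{jk}(\xi_j+i\sigma\phi_j)(\xi_k+i\sigma\phi_k) = u(x,\xi,\sigma) + iv(x,\xi,\sigma),
\]
with
\[
u := p(x,\xi) - \sigma^2 p(x,\pa\phi), \qquad v := \sigma\,\{p,\phi\}(x,\xi),
\]
both real-valued smooth functions of $(x,\xi)$ depending polynomially on $\sigma$. Define
\[
K := \{\,(x,\xi,\sigma) \in \Omb\times S \;:\; u(x,\xi,\sigma)=0 \text{ and } \{p,\phi\}(x,\xi)=0\,\}.
\]
Since the defining equations are continuous and $\Omb \times S$ is compact, $K$ is compact.

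Next I will show that $F(x,\xi,\sigma) := \frac{1}{i\sigma}\{\overline{p(x,\zeta)},p(x,\zeta)\}$ extends smoothly across $\sigma=0$. By bilinearity and antisymmetry of the Poisson bracket applied to the real-valued $u,v$,
\[
\{\overline{p(x,\zeta)},\,p(x,\zeta)\} = \{u-iv,\,u+iv\} = 2i\{u,v\}.
\]
Treating $\sigma$ as a parameter (so $\{u,\sigma g\} = \sigma\{u,g\}$) and using $v = \sigma\{p,\phi\}$,
\[
F = \frac{2}{\sigma}\{u,v\} = 2\{u,\{p,\phi\}\} = 2\{p,\{p,\phi\}\}(x,\xi) \;-\; 2\sigma^2\{p(\,\cdot\,,\pa\phi),\{p,\phi\}\}(x,\xi).
\]
The right-hand side is polynomial in $\sigma$ with smooth coefficients in $(x,\xi)$, so $F$ extends smoothly to all of $\Omb\times\R^n\times\R$, and at $\sigma=0$ it takes the value $2\{p,\{p,\phi\}\}(x,\xi)$.

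Finally I will apply strong pseudoconvexity of $\phi$ to conclude $F>0$ on $K$. At a point of $K$ with $\sigma\neq 0$, the equations $u=0$ and $\{p,\phi\}(x,\xi)=0$ together say that both the real and imaginary parts of $p(x,\zeta)$ vanish, i.e.\ $p(x,\zeta)=0$; the second clause in the definition of strong pseudoconvexity of $\phi$ then gives $F(x,\xi,\sigma)>0$. At a point of $K$ with $\sigma=0$, the condition $(\xi,0)\in S$ forces $\xi\neq 0$, and the defining equations reduce to $p(x,\xi)=\{p,\phi\}(x,\xi)=0$; the first clause then gives $\{p,\{p,\phi\}\}(x,\xi)>0$, which matches the limiting value of $F$ computed above. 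Thus $F$ is continuous and strictly positive on the compact set $K$, so $c := \min_K F > 0$ is the required constant. I do not expect a genuine obstacle here; the only mild subtlety is verifying that the apparent $1/\sigma$ singularity in the definition of $F$ is removable and that its value at $\sigma=0$ is exactly the expression controlled by the first clause of pseudoconvexity, and the bracket computation above handles both points simultaneously.
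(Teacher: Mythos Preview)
Your proof is correct and follows essentially the same approach as the paper: split $p(x,\zeta)$ into real and imaginary parts, compute the bracket explicitly to see that $\frac{1}{i\sigma}\{\overline{p(x,\zeta)},p(x,\zeta)\}$ is a polynomial in $\sigma$ with limit $2\{p,\{p,\phi\}\}(x,\xi)$ at $\sigma=0$, then invoke the two clauses of strong pseudoconvexity together with compactness of $\Omb\times S$. The only cosmetic difference is that the paper further rewrites the $\sigma^2$ term via an identity $\{p,\{p,\phi\}\}(x,\pa\phi)=\{\{p,\phi\},p(x,\pa\phi)\}(x,\xi)$, but this is not needed for the argument and your version is equally complete.
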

\begin{proof}
We have
\begin{align*}
p(x,\zeta)  & = a^{jk} (\xi_j + i \sigma \phi_j) (\xi_k + i \sigma \phi_k)
\\
& = a^{jk} \xi_j \xi_k - \sigma^2 a^{jk} \phi_j \phi_k +  i \sigma a^{jk} \xi_k \phi_j + i \sigma a^{jk} \xi_j \phi_k
\\
& = p(x,\xi) - \sigma^2 p(x, \pa \phi) + i \sigma \frac{\pa p}{\pa \xi_j} \phi_j
\\
& = A(x, \xi, \sigma) + i \sigma B(x,\xi)
\end{align*}
where
\[
A(x, \xi, \sigma) = p(x,\xi) - \sigma^2 p(x, \pa \phi), \qquad B(x,\xi) = \{p, \phi\}(x,\xi)
\]
are real valued.
Hence, for $\sigma \neq 0$, using $\{A,A\}=0$, $\{B,B\}=0$ and $\{B,A\} = - \{A,B\}$, we have
\begin{align*}
\frac{1}{2i \sigma} \{ \overline{p(x,\zeta)}, p(x,\zeta) \} 
& =\frac{1}{2i \sigma} \{ A(x,\xi,\sigma) - i \sigma B(x,\xi), A(x, \xi, \sigma) + i \sigma B(x,\xi) \}
\\
& = \{A,B \}  = \{ p, \{p, \phi\}\}(x,\xi) - \sigma^2 \{ p(x, \pa \phi), \{p, \phi\}\}(x,\xi)
\\
& = \{ p, \{p, \phi\}\}(x,\xi) + \sigma^2 \{  \{p, \phi\}, p(x, \pa \phi)\}(x,\xi)
\\
& = \{ p, \{p, \phi\}\}(x,\xi)  + \sigma^2 \{ p, \{p, \phi\}\}(x, \pa \phi),
\end{align*}
where the last step follows from the relation 
\beqn
\{ p, \{p, \phi\}\}(x,\pa \phi) = \{  \{p, \phi\}, p(x, \pa \phi)\}(x,\xi)
\label{eq:temp1c}
\eeqn
which is verified at the end of this proof. Hence
\[
\lim_{\sigma \to 0} \, \frac{1}{2 i \sigma}  \{ \overline{p(x,\zeta)}, p(x,\zeta) \}  = \{ p, \{p, \phi\}\}(x,\xi).
\]
So if we define $\frac{1}{2 i \sigma}  \{ \overline{p(x,\zeta)}, p(x,\zeta) \}$ to be  $\{ p, \{p, \phi\}\}(x,\xi)$ when $\sigma =0$ 
then 
$\frac{1}{2 i \sigma}  \{ \overline{p(x,\zeta)}, p(x,\zeta) \}$ is a continuous real valued function on the compact set
$\Omb \times S$. 
Now the definition of strong pseduoconvexity guarantees that $\frac{1}{2 i \sigma}  \{ \overline{p(x,\zeta)}, p(x,\zeta) \}$ is positive 
on the set
\[
\{ (x,\xi, \sigma) \in \Omb \times S :  p(x,\xi) - \sigma^2 p(x, \pa \phi)=0 = \{ p, \phi \}(x,\xi) \} 
\]
provided $\sigma \neq 0$. When $\sigma=0$, the points on this set lie in 
\[
\{ (x,\xi) \in \Omb \times \R^n : ~ \xi \neq 0, ~ p(x, \xi)=0, ~ \{p, \phi \}(x, \xi)=0 \}
\]
and $\{ p, \{p, \phi \} \}$ is positive on this set by the definition of strong pseudoconvexity. Hence the Lemma follows by continuity 
and compactness.

It remains to verify (\ref{eq:temp1c}) which we do now using Euler's identity for homogeneous functions and the 
fact that 
$\frac{ \pa p}{\pa \xi_j} (x,\xi)$ is a homogeneous of degree $1$ in $\xi$ and $p_j(x,\xi)$ is homogeneous of degree
$2$ in $\xi$. We have
\begin{align*}
 \{ \{p(x,\xi), \phi\}, p(x, \pa \phi) \}(x,\xi) &= \frac{\pa}{\pa \xi_j} \left (  \frac{\pa p}{\pa \xi_k} (x, \xi)
\; \phi_k(x) \right ) \left ( p_j(x, \pa \phi) +  \frac{\pa p}{\pa \xi_k}(x, \pa \phi) \phi_{jk}(x) \right )
\\
&= \left (  \frac{\pa^2 p}{\pa \xi_j \xi_k} (x, \xi)
\; \phi_k(x) \right ) \left ( p_j(x, \pa \phi) +  \frac{\pa p}{\pa \xi_k}(x, \pa \phi) \phi_{jk}(x) \right )
\\
&= \frac{\pa p}{\pa \xi_j} (x, \pa \phi) \left ( p_j(x, \pa \phi) +  \frac{\pa p}{\pa \xi_k}(x, \pa \phi) \phi_{jk}(x) \right )
\end{align*}
since $\frac{\pa^2 p}{\pa \xi_j \xi_k} (x, \xi) \phi_k(x) = \frac{\pa^2 p}{\pa \xi_j \xi_k} (x, \xi) \phi_k(x)|_{\xi=\pa \phi} = \frac{\pa p}{\pa \xi_j} (x, \pa \phi)$, and
\begin{align*}
 \{p,\{p,\phi\}\}(x, \pa \phi) &= \frac{\pa p}{\pa \xi_j} (x, \pa \phi) \{p, \phi\}_j(x, \pa \phi)
- p_j(x) \left ( \frac{\pa \{p, \phi\}}{\pa \xi_j}   \right )(x, \pa \phi)
\\
&= \frac{\pa p}{\pa \xi_j} (x, \pa \phi)
\left ( \frac{\pa p_j}{\pa \xi_k} (x, \pa \phi) \phi_k
+ \frac{\pa p}{\pa \xi_k} (x, \pa \phi) \phi_{jk}
\right )
- p_j \frac{\pa^2 p}{\pa \xi_k \xi_j} (x, \pa \phi) \phi_k
\\
&=\frac{\pa p}{\pa \xi_j} (x, \pa \phi)
\left ( 2p_j (x, \pa \phi)
+ \frac{\pa p}{\pa \xi_k} (x, \pa \phi) \phi_{jk}
\right )
- p_j \frac{\pa p}{\pa \xi_j} (x, \pa \phi)
\\
&=\frac{\pa p}{\pa \xi_j} (x, \pa \phi)
\left ( p_j (x, \pa \phi)
+ \frac{\pa p}{\pa \xi_k} (x, \pa \phi) \phi_{jk}
\right ). \qedhere
\end{align*}
\end{proof}


Here is the main result about Carleman estimates with boundary terms.
%
\begin{Theorem}\label{thm:carleman}
Suppose $\Omega$ is a bounded open set in $\R^n$, $n \geq 2$, with a Lipschitz boundary, and $P(x,D)$ is a second order differential operator on 
$\Omb$ with bounded coefficients whose principal symbol $p(x,\xi)$ has real $C^1$ coefficients. If $\phi$ is a smooth function on 
$\Omb$ with $\pa \phi$ never zero in $\Omb$ and $\phi$
is strongly pseudoconvex with respect to $P(x,D)$ on $\Omb$, then for large
enough $\sigma$ and for all real valued $u \in C^2(\Omb)$ one has 
\beqn
\sigma \int_\Omega e^{2 \sigma \phi} (|\pa u|^2 + \sigma^2 u^2)
+\sigma \int_{\pa \Omega} \nu^j E^j
\cleq  \int_\Omega e^{2 \sigma \phi} |P u|^2,
\label{eq:ucarl}
\eeqn
with the constant independent of $\sigma$ and $u$. Here $\nu=(\nu^1, \cdots, \nu^n)$ is the outward unit normal to $\pa \Omega$,
\[
E^j := A(x, \pa v, \sigma v) \, \frac{ \pa B}{\pa \xi_j}(x) 
- \frac{ \pa A}{\pa \xi_j} (x, \pa v,\sigma v) \, ( B(x, \pa v) + g(x) v),
\]
$v = e^{\sigma \phi} u$, $g$  some real valued function independent of $\lambda, \sigma, u,$ and
\beqn
A(x, \xi, \sigma) := p(x,\xi) - \sigma^2 p(x, \pa \phi),
~~
B(x, \xi) := \{p, \phi\}(x, \xi).
\label{eq:ABform}
\eeqn
\end{Theorem}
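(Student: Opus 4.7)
The plan is to follow the standard H\"ormander--Tataru conjugation-and-symmetrization strategy (cf.\ \cite{Ho76}, Chapter~8, and \cite{Ta99}, Chapter~4), while keeping explicit track of all boundary fluxes produced by integration by parts. First I would conjugate by the weight: set $v := e^{\sigma \phi} u$ and $P_\phi := e^{\sigma \phi} P(x,D) e^{-\sigma \phi}$, so that $e^{\sigma \phi} P u = P_\phi v$. A direct calculation identifies the principal symbol of $P_\phi$ at $(x,\xi)$ as $p(x, \xi + i\sigma \pa \phi(x)) = A(x,\xi,\sigma) + i\sigma B(x,\xi)$, with $A, B$ as in \eqref{eq:ABform}. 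I would then split $P_\phi = Q_1 + iQ_2$ so that $Q_1$ and $Q_2$ are formally symmetric on $C^\infty_c(\Omega)$, with real principal symbols $A$ and $\sigma B$, respectively; the first-order lower-order terms produced by conjugation are absorbed into a real-valued function $g(x)$ depending only on the coefficients of $P$ and on $\phi$, so that $Q_2 = \sigma(B(x,D) + g(x))$ up to admissible lower-order corrections.

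Since $u$ and hence $v$ is real, I would then work with the identity $\|P_\phi v\|_{L^2(\Omega)}^2 = (v, P_\phi^\ast P_\phi v)_{L^2(\Omega)} + \text{(boundary)}$ and use $P_\phi^\ast P_\phi = Q_1^2 + Q_2^2 + i[Q_1, Q_2]$. The commutator $i[Q_1, Q_2]$ is first-order with real principal symbol $\{A, \sigma B\}$, and a sequence of further integrations by parts converts it into $(i[Q_1, Q_2] v, v)_{L^2(\Omega)}$ plus another boundary flux. A tedious but routine algebraic simplification, relying in part on a boundary version of the Poisson-bracket identity \eqref{eq:temp1c} derived inside Lemma~\ref{lemma:positive}, identifies the total boundary contribution as precisely $\int_{\pa \Omega} \nu^j E^j \, dS$ with
\[
E^j = A(x, \pa v, \sigma v)\, \tfrac{\pa B}{\pa \xi_j}(x) - \tfrac{\pa A}{\pa \xi_j}(x, \pa v, \sigma v)\, \bigl(B(x, \pa v) + g(x) v\bigr).
\]
The appearance of the same $g$ in the splitting and in the flux is not coincidental: matching the two is exactly what pins down the unique choice of $g(x)$.

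To close the estimate, I would invoke Lemma~\ref{lemma:positive}, which gives a quantitative lower bound of order $\sigma(|\xi|^2 + \sigma^2)$ for $\{A, \sigma B\}$ on the joint characteristic set $\{A = B = 0\}$. A standard G{\aa}rding-type argument --- using the interior terms $\|Q_1 v\|^2 + \|Q_2 v\|^2$ already present in the expansion of $\|P_\phi v\|^2$ to absorb the microlocal region where $\{A, \sigma B\}$ fails to be positive --- yields, for $\sigma$ sufficiently large,
\[
c \sigma \int_\Omega \bigl(|\pa v|^2 + \sigma^2 v^2\bigr)\, dx + \sigma \int_{\pa \Omega} \nu^j E^j\, dS \;\leq\; \|P_\phi v\|_{L^2(\Omega)}^2,
\]
and \eqref{eq:ucarl} follows upon reverting to $u$ via $\pa v = e^{\sigma \phi}(\pa u + \sigma u \pa \phi)$ and absorbing the resulting cross terms into the $\sigma$-weighted $H^1$ norm on the left. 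The main technical obstacle is the algebraic bookkeeping that identifies $E^j$: every integration by parts spawns lower-order interior remainders which must either cancel through Poisson-bracket identities of the type used in Lemma~\ref{lemma:positive} or be absorbed by the dominant $\sigma$-weighted norm. It is this cancellation requirement that both forces the specific choice of $g(x)$ and --- crucially for the applications in Sections~\ref{sec_almost_reflection_symmetric} and \ref{sec_horizontally_controlled} --- ensures that on any characteristic portion of $\pa \Omega$ the flux $E^j$ involves only tangential derivatives of $v$ and never the normal derivative.
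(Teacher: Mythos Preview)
Your overall strategy—conjugate, split $P_\phi$ into parts with real symbols $A$ and $\sigma B$, exploit the cross term whose symbol is $\sigma\{A,B\}$, and close via strong pseudoconvexity—is the right one and matches the paper in spirit. But your account of where $g$ comes from is a genuine misconception, and the closure argument in the paper is different from (and more elementary than) the G{\aa}rding-type absorption you sketch.

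On $g$: you assert that the lower-order terms produced by conjugation are ``absorbed into a real-valued function $g(x)$'' so that $Q_2=\sigma(B(x,D)+g)$, and that matching the boundary flux to the interior splitting ``pins down the unique choice of $g(x)$.'' This is not how $g$ arises. The conjugation yields $e^{\sigma\phi}Pu=Av+i\sigma Bv+\sigma r(x)v$ with a fixed $r=a^{jk}\phi_{jk}$; the paper then inserts an \emph{arbitrary} $C^1$ function $g$ by writing $\sigma rv=\sigma gv+\sigma(r-g)v$ and throwing the second piece into the harmless error $c\sigma^2v^2$. The divergence identities for $2i(Av)(Bv)$ and $2(Av)(gv)$ produce the boundary term $E^j$ together with the interior quadratic form $\{A,B\}(x,\partial v,\sigma v)+dB(x,\partial v)^2+h(x)A(x,\partial v,\sigma v)$, where $h=2g-\sum_s\partial_{\xi_s}\partial_{x_s}B$. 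Only \emph{after} this does one choose $h$ (and hence $g$) to make that quadratic form positive definite in $(\xi,\sigma)$ for every $x\in\Omb$. The existence of such an $h$ is itself nontrivial: it requires a compactness/partition-of-unity construction together with a topological argument (projective connectedness of the zero set of a real quadratic form in $\R^{n+1}$, $n\geq2$). So $g$ is neither unique nor determined by the symmetrization.

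On the closure: the paper never uses $|Av|^2$ or a G{\aa}rding argument; $|Av|^2$ is simply dropped. Instead it proves the \emph{pointwise} inequality $e^{2\sigma\phi}|Pu|^2\gtrsim\sigma(|\partial v|^2+\sigma^2v^2)+\sigma\,\partial_jE^j$ and integrates once. The explicit $E^j$ falls out directly from the two divergence identities above—the $A\,\partial_{\xi}B-(\partial_{\xi}A)(B+gv)$ structure is exactly what those identities produce—rather than from commutator bookkeeping followed by further integrations by parts. Your route can be made to work, but it does not automatically yield a pointwise inequality, and the identification of the boundary flux is correspondingly less transparent. Finally, the fact that on a characteristic piece of $\partial\Omega$ only tangential derivatives of $v$ appear in $\nu^jE^j$ is not forced by any cancellation tied to $g$; it is read off directly from the explicit formula for $E^j$ and verified separately for $\Box$ in Section~\ref{subsection_boundary_terms_wave}.
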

\begin{Remark}
It is not difficult to see that the expressions for $E^j$ and (\ref{eq:ucarl}) imply that 
\[
\sigma \int_\Omega e^{2 \sigma \phi} (|\pa u|^2 + \sigma^2 u^2)
\cleq  \int_\Omega e^{2 \sigma \phi} |P u|^2
+ \sigma \int_{\pa \Omega} e^{2 \sigma \phi} ( |\pa u|^2 + \sigma^2 u^2),
\]
for all $u \in C^2(\Omb)$.
\end{Remark}
%


\begin{proof}
Since the statement of Theorem \ref{thm:carleman} is not affected by a first order perturbation to $P$ we may assume that
$b_j=0$, $c=0$. The Carleman estimate follows quickly from an algebraic inequality derived with the help of Lemma \ref{lemma:positive}. Below 
\[
A(x,\xi,\sigma) = a^{jk} \xi_j \xi_k - \sigma^2 a^{jk} \phi_j \phi_k,
\qquad
B(x,\xi) = \{p(x,\xi),\phi(x)\}
\]
so $A(x,\xi,\sigma)$ is a quadratic form in $(\xi, \sigma)$ and $B(x,\xi)$ is a linear form in $\xi$. Hence
\begin{gather*}
A(x,D, \sigma) = a^{jk} D_j D_k - \sigma^2 a^{jk} \phi_j \phi_k,
\quad 
A(x,\pa v, \sigma v) = a^{jk} v_j v_k - \sigma^2 v^2 a^{jk} \phi_j \phi_k
\\
B(x,D) = \{ p, \phi \}(x,D), \quad B(x,\pa v) = \{ p, \phi \}(x, \pa v).
\end{gather*}
For convenience, sometimes we abbreviate $P(x,D) u(x)$ to $Pu$, $A(x,D,\sigma) v(x)$ to $Av$ and $B(x,D) v(x)$ to $Bv$.

Define $v := e^{\sigma \phi} u$; we show there is a smooth
function $g(x)$, independent of $u$ and $\sigma$,
 so that for large enough $\sigma$
\begin{align}
e^{2 \sigma \phi} |P u|^2
\cgeq \sigma ( |\partial v|^2 + \sigma^2 |v|^2 ) + \sigma
\partial_j E^j, \qquad \text{on} ~ \Omb,
\label{eq:vmain}
\end{align}
with the constant independent of $u,\sigma,x$ and each $E^j$ is a quadratic form in $(\pa v, \sigma v)$ defined in the statement
of Theorem \ref{thm:carleman}. Now $v= e^{\sigma \phi}u$ implies $u = e^{- \sigma \phi} v$ so
$ e^{\sigma \phi} \partial u = \partial v - \sigma \partial \phi \, v$ and
$\partial v = e^{ \sigma \phi} (\partial u + \sigma \partial \phi \, u)$.
Hence
\[
e^{2 \sigma \phi} (|\partial u|^2 + \sigma^2 u^2)
\cleq |\partial v|^2 + \sigma^2 |v|^2
\cleq
e^{2 \sigma \phi} (|\partial u|^2 + \sigma^2 u^2)
\]
with the constant independent of $\sigma, u$ and $x \in \Omb$. Applying this
to (\ref{eq:vmain}) we recover (\ref{eq:ucarl}); so it remains to prove
(\ref{eq:vmain}).

Since $u = e^{-\sigma \phi}v$ we have
$
e^{\sigma \phi} D_j u   =
e^{\sigma \phi} D_j( e^{-\sigma \phi} v ) =
(D_j  +i \sigma  \phi_j)v
$
hence
\begin{align*}
e^{\sigma \phi} p(x,D) u &= p(x, D + i \sigma \pa \phi)v.
\end{align*}
Now
\begin{align*}
p(x, D + i \sigma \pa \phi) &= a^{jk} (D_j + i \sigma \phi_j) (D_k + i \sigma \phi_k)
\\
&= a^{jk}(D_j D_k - \sigma^2 \phi_j \phi_k) + 2 i \sigma a^{jk} \phi_j D_k + \sigma a^{jk} \phi_{jk}
\\
&=A(x,D,\sigma) + i \sigma B(x,D) + \sigma r(x)
\end{align*}
for the known bounded function $r(x) := a^{jk} \phi_{jk}$. Hence, for any real valued  function $g(x) \in C^1(\Omb)$
\begin{align}
e^{2 \sigma \phi} | P u|^2 &= |A v + i \sigma Bv + \sigma rv|^2
= |(A v + i \sigma Bv + \sigma g v) + \sigma (r-g)v|^2
\nn
\\
&\cgeq  | A v + i \sigma Bv + \sigma gv|^2 - 
c\sigma^2  |v|^2 
\nn
\\
&\geq  |Av|^2 + \sigma^2 |Bv|^2 -  i \sigma ( A v \overline{Bv} - \overline{Av} Bv) 
 + 2 \sigma Av \, g v - 2 \sigma^2 gv \,\im(Bv) - c\sigma^2 |v|^2
\nn
\\
&\cgeq \sigma^2 |Bv|^2 -  i \sigma ( A v \overline{Bv} - \overline{Av} Bv) + 2 \sigma Av \, g v
-c \sigma |Bv| \, \sigma |v| -c \sigma^2 |v|^2
\nn
\\
&\cgeq  \sigma^2 |Bv|^2 + 2i \sigma Av \, Bv 
+ 2 \sigma Av \, g v -  c\sigma^2 |v|^2
\label{eq:temp1}
\end{align}
because $Av$ is real and $Bv$ is purely imaginary. Here the constant $c$ may change from line to line and $c$ and the
constant in the inequality depends only on $g, \phi$ and $a^{jk}$. 

Next we express $\sigma^2 |Bv|^2 + 2i \sigma Av \, Bv + 2 \sigma Av \, g v$ as the sum of a divergence of a vector field and a quadratic form in $(\pa v, \sigma v)$ closely tied to the pseudoconvexity condition; see section 8.2 of \cite{Ho76} for a more 
general version of these calculations. 

We first work with $2i Av \, Bv$; $A(x,D, \sigma)v$ is a sum of terms of the form $a(x) D_j D_k v$ and $\sigma^2a(x) v$, and 
$B(x,D)v$ is a sum of terms of the form $b(x)D_m v$ with $a,b,v$ real valued functions. 
If $Av =\sigma^2 a(x) v$ and $Bv = b(x)D_m v$ then
\begin{align}
2i Av \,  Bv 
&=2 \sigma^2 ab v_m v =  \sigma^2 ab (v^2)_m = \sigma^2 (ab v^2)_m - \sigma^2 (ab)_m v^2
\nn
\\
& = - a_m (\sigma v)^2 \, b - \sigma^2 a v^2 \, b_m   + \sigma^2 (ab v^2)_m
\nn
\\
& = \{A, B\}(x, \pa v, \sigma v) - A(x,\pa v,\sigma v) \, b_m + \sum_{l=1}^n  \frac{ \pa }{\pa x_l}
\left ( A(x,\pa v,\sigma v) \,  \frac{ \pa B}{\pa \xi_l}(x) \, \right )
\nn
\\
& =  \{A, B\}(x, \pa v, \sigma v) - A(x,\pa v,\sigma v) 
\sum_{s=1}^n \frac{ \pa^2 B }{\pa \xi_s \, \pa x_s}(x)  
\nn
\\
& \hspace{1in} + \sum_{l=1}^n  \frac{ \pa }{\pa x_l}
\left ( A(x, \pa v, \sigma v) \,  \frac{ \pa B}{\pa \xi_l}(x)  \right ).
\label{eq:ABC}
\end{align}
If $Av = a(x) D_j D_k v$ and $B(x,D)v= b(x)D_m v$ then
\begin{align}
2i Av \, & Bv   = -2ab v_{jk}v_m
=-ab \left ( (v_k v_m)_j + (v_j v_m)_k - (v_j v_k)_m \right )
\nn
\\
&=  (ab)_j v_k v_m + (ab)_k v_j v_m - (ab)_m v_{j}  v_k
-  (ab v_k v_m)_j - (ab v_j v_m)_k + (ab v_{j} v_k)_m
\nn
\\
&=(ab)_j v_k v_m + (ab)_k v_j v_m - (ab)_m v_{j}  v_k 
\nn
\\
& ~~~
+ \sum_l  \frac{ \pa }{\pa x_l} \left ( - \frac{ \pa A}{\pa \xi_l} (x,\pa v,\sigma v) \, B(x, \pa v) + 
A(x, \pa v, \sigma v) \, \frac{ \pa B}{\pa \xi_l}(x)
 \right ).
\label{eq:A1B1}
\end{align}
Now
\begin{align*}
(ab)_j v_k v_m & + (ab)_k v_j v_m - (ab)_m v_{j}  v_k
\\
&=\left ( a v_k \, b_j v_m + a v_j \, b_k v_m - a_m v_{j} v_k \, b \right )
+ \left ( a_j v_k \, b v_m + a_k v_j \, b v_m -  a v_{j} v_k \, b_m  \right )
\\
& = \{ A, B \}(x, \pa v, \sigma v) + M(x, \pa v) B(x,\pa v) -  A(x,\pa v, \sigma v) b_m
\\
&= \{ A, B \}(x, \pa v, \sigma v) + M(x, \pa v) B(x, \pa v) - A(x, \pa v, \sigma v)
 \sum_{s=1}^n \frac{ \pa^2 B }{\pa \xi_s \, \pa x_s}(x)
\end{align*}
where $M(x,\xi) = a_j \xi_k + a_k \xi_j $ is homogeneous and linear of degree $1$ in $\xi$ and
 is independent of $B(x,\xi)$. Hence using (\ref{eq:A1B1}) we have
\begin{align}
2i Av \,  Bv  
&=\{ A, B \}(x, \pa v, \sigma v) + M(x, \pa v) B(x,\pa v) -  
 A(x, \pa v, \sigma v) \sum_{s=1}^n \frac{ \pa^2 B }{\pa \xi_s \, \pa x_s}(x)
\nn
\\
& ~~~
+ \sum_{l=1}^n 
 \frac{ \pa }{\pa x_l}
\left ( 
A(x, \pa v, \sigma v) \, \frac{ \pa B}{\pa \xi_l}(x) 
- \frac{ \pa A}{\pa \xi_l} (x,\pa v,\sigma v) \, B(x, \pa v) 
\right ).
\label{eq:A1B1final}
\end{align}
If $Av = \sigma^2 a(x) v$ then one can see that the last term in (\ref{eq:ABC})
is the same as the last term in (\ref{eq:A1B1final}) because in this case $\frac{ \pa A}{\pa \xi_l}=0$. Hence, since
(\ref{eq:A1B1final})
is bilinear in $A$ and $B$,
we may conclude that for the $A,B$ given by (\ref{eq:ABform})  and for $M$ given by
\[
M(x,\xi) = \sum_{j,k} ( (a^{jk})_j \xi_k + (a^{jk})_k \xi_j) = 2 \sum_{j,k} (a^{jk})_j \xi_k,
\]
one has
\begin{align}
2i Av \, Bv
&= \{ A, B \}(x, \pa v, \sigma v) + M(x,\pa v) B(x,\pa v) -  
A(x, \pa v, \sigma v) \sum_{s=1}^n \frac{ \pa^2 B }{\pa \xi_s \, \pa x_s}(x) 
+ \pa_l F^l
\nn
\\
& \geq \{ A, B \}(x, \pa v, \sigma v) - 
A(x, \pa v, \sigma v) \sum_{s=1}^n \frac{ \pa^2 B }{\pa \xi_s \, \pa x_s}(x) 
+ \pa_l F^l 
\nn
\\
& \hspace{1in} -  c_1 \sqrt{\sigma} \, |B(x,\pa v)|^2 -  \frac{c_2}{\sqrt{\sigma}} |\pa v|^2
\label{eq:algeb}
\end{align}
where 
\[
F^l := A(x, \pa v, \sigma v) \, \frac{ \pa B}{\pa \xi_l}(x)  
- \frac{ \pa A}{\pa \xi_l} (x,\pa v,\sigma v) \, B(x, \pa v).
\]

Now we examine the term $2 Av \,g v$ in (\ref{eq:temp1}). If $Av = \sigma^2 a(x) v$ then
\begin{align}
2 Av \, g v = 2 \sigma^2 a g v^2 = 2A(x,\pa v, \sigma v) g(x).
\label{eq:A0g0}
\end{align}
If $Av=a(x) D_j D_k v$ then
\begin{align}
2 Av \, g v & = -2 a v_{jk} g v 
=  -a g v_{jk} v - a g v_{jk} v
\nn
\\
&=  2a g v_j v_k - (a g v_j v)_k - (a g v_k v)_j
+ (ag)_k v_j v + (ag)_j v_k v
\nn
\\
& = 2 A(x,\pa v, \sigma v) g(x) + N(x, \pa v) v -
\sum_l \frac{ \pa}{\pa x_l}
\left ( \frac{ \pa A}{\pa \xi_l} (x, \pa v, \sigma v) \, g(x) v \right )
\label{eq:A1g0}
\end{align}
where $N(x,\xi) = (ag)_k \xi_j + (ag)_j \xi_k$ is linear in $\xi$.
Note that (\ref{eq:A1g0}) is valid even in the (\ref{eq:A0g0}) case with $N \equiv 0$. Hence using linearity of (\ref{eq:A1g0}) in $A$, for the
$A(x,D, \sigma)v$ given by (\ref{eq:ABform}) we have
\begin{align}
2 A(x,D,\sigma)v \, g(x) v & \geq 2 A(x,\pa v, \sigma v) g(x) -  \frac{ \pa}{\pa x_l} \left ( \frac{ \pa A}{\pa \xi_l} (x, \pa v, \sigma v) \, g v \right )
-  c_1 \sqrt{\sigma} \, |v|^2 -  \frac{c_2}{\sqrt{\sigma}} |\pa v|^2.
\label{eq:Afinal}
\end{align}
So using (\ref{eq:algeb}) and (\ref{eq:Afinal}) in (\ref{eq:temp1}), for large enough $\sigma$ (determined by $\phi$, $a^{jk}$ and
$g$), and using that $\sigma^2 |B(x,\pa v)|^2 \geq \sigma d |B(x,\pa v)|^2$ when $\sigma \geq d$, we obtain
\begin{align}
e^{2 \sigma \phi} | P u|^2
\cgeq &
\ \sigma \{A,B\}(x,\pa v, \sigma v) + \sigma d |B(x,\pa v)|^2 + \sigma h(x) A(x,\pa v, \sigma v) + \sigma \pa_l E^l 
\nn
\\
& \hspace{1in} - c_1 \sqrt{\sigma} \, |\pa v|^2 - c_2 \sigma^2 v^2
\label{eq:temp2}
\end{align}
where
\beqn
h(x) := 2 g(x) - \sum_{s=1}^n \frac{ \pa^2 B }{\pa \xi_s \, \pa x_s}(x) 
\label{eq:hdef}
\eeqn
and
\beqn
E^l := A(x, \pa v, \sigma v) \, \frac{ \pa B}{\pa \xi_l}(x) 
- \frac{ \pa A}{\pa \xi_l} (x,\pa v,\sigma v) \, ( B(x, \pa v) + g(x) v).
\label{eq:Eldef}
\eeqn
The quantity $\{A,B\}(x,\pa v, \sigma v) + d |B(x,\pa v)|^2 + h(x) A(x,\pa v, \sigma v)$ in 
(\ref{eq:temp2}) is a quadratic form in the vector
$(\partial v, \sigma v)$. If we can find a constant $d>0$
and a smooth function $h(x)$ on $\Omb$ so that
\beqn
\{A,B\}(x,\xi,\sigma) + d B(x,\xi)^2 + h(x) A(x,\xi,\sigma) >0, \qquad \text{for } (x,\xi,\sigma) \in \Omb \times S
\label{eq:form}
\eeqn
then from (\ref{eq:temp2}), for large enough $\sigma$,
\begin{align*}
e^{2 \sigma \phi} |P u|^2
& \cgeq \sigma (|\partial v|^2 + \sigma^2 |v|^2) + \sigma \partial_j  E^j
- \sqrt{\sigma} \, |\pa v|^2 -  \sigma^2 |v|^2
\nn \\
& \cgeq \sigma ( |\partial v|^2 + \sigma^2 |v|^2 ) + \sigma
\partial_j E^j,
\end{align*}
proving (\ref{eq:vmain}). Here $g$ is determined by \eqref{eq:hdef} and $h$. So it remains to prove
(\ref{eq:form}).

 For $\zeta=\xi+i \sigma \pa \phi$ we have
\begin{align*}
p(x,\xi+i \sigma \pa \phi) &=A(x,\xi,\sigma) + i \sigma B(x,\xi)
\\
 \frac{1}{i \sigma} \{ \overline{p(x,\zeta)}, p(x,\zeta) \}
 & = \frac{1}{i \sigma} \{ A-i\sigma B, A + i \sigma B \}(x,\xi)
 = 2 \{A(x,\xi,\sigma), B(x,\xi) \},
 \end{align*}
 so, noting that $A(x,\xi,\sigma), B(x,\xi)$ are real valued and homogeneous in $(\xi, \sigma)$, from Lemma 
 \ref{lemma:positive} we have
\beqn
\{A, B \}(x,\xi, \sigma) >0, \qquad \text{for } (x,\xi, \sigma) \in \Omb \times S ~~ \text{with} ~A(x,\xi, \sigma)=0, ~
B(x, \xi)=0.
\label{eq:ABP}
\eeqn
Hence\footnote{
There is an $\epsilon>0$ so that $\{A,B\}(x, \xi, \sigma)$ is positive on
$ \{ (x, \xi, \sigma) \in \Omb \times S : A(x,\xi,\sigma)=0, |B(x,\xi)|^2 \leq \epsilon \}$. Otherwise, there would be
a convergent sequence $(x_k, \xi_k, \sigma_k)$ in $\Omb \times S$ for which 
$A(x_k, \xi_k, \sigma_k)=0$, 
$|B(x_k, \xi_k)|^2 \to 0$ and $\{A,B\}(x_k, \xi_k, \sigma_k) \leq 0$; then 
taking limits we would violate (\ref{eq:ABP}). So assume there is such a
positive $\epsilon$; then choose $d$ large enough so that $d \epsilon$
exceeds the maximum of $|\{A,B\}(x,\xi,\sigma))|$ over
$\{ (x, \xi, \sigma) \in \Omb \times S : A(x, \xi, \sigma)=0 \}$.}
we can find a $d>0$ so that
\beqn
\{A,B\}(x,\xi,\sigma) + d |B(x,\xi)|^2 >0, \qquad \text{for }
(x, \xi, \sigma) \in \Omb \times S ~ \text{with} ~ A(x,\xi,\sigma)=0.
\label{eq:Apos}
\eeqn
Now fix an $x \in \Omb$ and define the following quadratic forms in $(\xi, \sigma$)
\begin{align*}
q(\xi, \sigma) & := \{A,B\}(x,\xi,\sigma) + d |B(x,\xi)|^2, \\
q_\lambda(\xi,\sigma) &:= q(\xi,\sigma) + \lambda A(x,\xi,\sigma).
\end{align*}
If we can find some constant $\lambda$ so that
$q_{\lambda}(\xi,\sigma) >0$
for all $(\xi, \sigma) \in S$, then the same $\lambda$ will work
in a neighborhood (in $\Omb$) of this $x$. Hence, using a partition of unity
argument, we can construct quadruples $(U_j, V_j, \chi_j, \lambda_j)$, $j=1, \cdots, m$, with 
%
\begin{itemize}
\item $U_j, V_j$ open subsets of $\R^n$,  $\overline{U_j} \subset V_j$ and
$ \Omb \subset \cup_{j=1}^m U_j$;
\item $\chi_j \in C_c^\infty(V_j)$,  $\chi_j$ nonnegative, $\chi_j >0$ on $U_j$ and $\sum_{j=1}^m \chi_j = 1$ 
on $\Omb$;
\item $\lambda_j \in \R$ and $q_{\lambda_j}(\xi, \sigma)>0$ for all $(x, \xi, \sigma) \in (\Omb \cap V_j) \times S$. 
\end{itemize}
%
%
Hence, if $h= \sum_{j=1}^m \lambda_j \chi_j$
then (\ref{eq:form}) holds for all $(x, \xi, \sigma)  \in \Omb \times S$ because
\begin{align*}
\{A,B\}(x,\xi,\sigma)  & + d B(x,\xi)^2 + h(x) A(x,\xi,\sigma)
\\
& = \{A,B\}(x,\xi,\sigma) + d B(x,\xi)^2 + A(x,\xi,\sigma) \sum_{j=1}^m \lambda_j \chi_j(x) 
\\
& = \sum_{j=1}^m \chi_j(x) \left ( \{A,B\}(x,\xi,\sigma) + d B(x,\xi)^2 + \lambda_j A(x,\xi,\sigma) \right ).
\end{align*}
So we take $g$ to be the function which
satisfies (\ref{eq:hdef}). It remains to show that (\ref{eq:Apos}) implies for any fixed $x \in \Omb$
there is a $\lambda \in \R$ with
$q_\lambda(\xi, \sigma)>0$ for all $(\xi, \sigma) \in S$.

Fix an $x \in \Omb$. Let $\Zl$ be the zero set of the quadratic
form $q_\lambda(\xi,\sigma)$ in $\R^{n+1} \setminus \{0\}$ - then $\Zl$ is a
collection of lines in $(\xi,\sigma)$ space. We claim that $\Zl$ (or
the zero set of any quadratic form) is projectively connected,
that is, there is a continuously varying family of lines
in $Z_\lambda$
connecting any two lines in $\Zl$. Without loss of generality we assume the quadratic form is 
generated by a diagonal
matrix with $l$ ones, $m$ minus ones, and $k$ zeros - we
prove the claim by induction on $l$. If $l=0$ or $m=0$ then it is trivial
so assume $l \geq 1$, $m \geq 1$. If $l=1$ then the zero set is a
cone times $\mR^k$ and hence projectively connected (if $l=m=1$ we need to use that $k \geq 1$, which 
follows since $n \geq 2$). If $l \geq 2$ 
and the line through the origin and $(p,q,r) \neq 0$ is in the zero set with 
$p \in \mR^l,  q \in \mR^m, r \in \mR^k$ then $|p|^2 = |q|^2$. We can find a $p' \in
\mR^{l-1}$ so that $|p'|^2 = |p|^2=|q|^2$; also we can connect $p$ to
$(p',0)$ by a curve on a ball of radius $|p|$. Hence the zero set
of the quadratic form is projectively connected to the zero set of a quadratic form with
signature $l-1,m, k$ and this zero set is projectively connected
by the induction hypothesis.

Now $q>0$ on $S \cap \{A=0\}$ by (\ref{eq:Apos}), hence $q>0$ on $S \cap \{ |A| \leq \epsilon\}$
for some $\ep>0$. Hence
\begin{itemize}
\item $q_{\lambda}=q+\lambda A>0$ on $S \cap \{A>0\}$ if  $\lambda > \epsilon^{-1}\max_S |q|$,
\item  $q_\lambda=q+\lambda A >0$ on $S \cap \{A<0\}$ if $\lambda< - \ep^{-1} \max_S |q|$,
\end{itemize}
so
\beqn
\text{
 $\Zl \cap S$ is contained in $A<0$ for $\lambda \gg 0$ and
$\Zl \cap S$ is contained in $A>0$ for $\lambda \ll 0$.
}
\label{eq:Z}
\eeqn
We claim that this implies $\Zl \cap S$ is
empty for some $\lambda$, that is for some $\lambda$, $q_\lambda$
is never zero on $S$ and hence has the same sign at every point on
$S$. But $q_\lambda>0$ on $A=0$ so $q_\lambda>0$ on $S$ which
would prove our claim. It remains to show that (\ref{eq:Apos}), (\ref{eq:Z}) imply $\Zl \cap S$ is empty for
some $\lambda$.

We argue by contradiction and suppose that $\Zl \cap S \neq \emptyset$ for all $\lambda \in \R$.
From (\ref{eq:Apos}) and the projective connectedness of $\Zl$,  
$\Zl \cap S$ is contained either in the set $A>0$ or the set $A<0$. Thus $\mR = \Lambda_+ \cup \Lambda_-$, where the sets $\Lambda_+$ and $\Lambda_-$ are defined as 
\[
\Lambda_+ := \{ \lambda \in \R : \Zl \cap S \subset \{A>0\} \},
\qquad 
\Lambda_- := \{ \lambda \in \R : \Zl \cap S \subset \{A<0\} \}.
\]
The sets $\Lambda_+$ and $\Lambda_-$ are non-empty because of
(\ref{eq:Z}) and disjoint since $\Zl \cap S \neq \emptyset$ for all $\lambda$. They are also closed: if there is a sequence 
$\lambda_k\rightarrow \lambda^*$ with $Z_{\lambda_k} \cap S$
contained in $A>0$ for all $k$,  there is a convergent sequence
$(\xi_k,\sigma_k) \to (\xi^*,\sigma^*)$ in $S$ with $A(\xi_k,\sigma_k)>0$ and
$q_{\lambda_k}(\xi_k, \sigma_k)=0$. Taking the limit we have
$q_{\lambda^*}(\xi^*, \sigma^*)=0$ and $A(\xi^*,\sigma^*) \geq 0$, which by \eqref{eq:Apos} implies
$q_{\lambda^*}(\xi^*, \sigma^*)=0$ and $A(\xi^*, \sigma^*) > 0$ so  $Z_{\lambda^*}\cap S$ is contained in 
$A>0$. Hence $\Lambda_+$ is closed and by a similar argument $\Lambda_-$ is closed. But now one has $\mR = \Lambda_+ \cup \Lambda_-$ where $\Lambda_+$ and $\Lambda_-$ are nonempty, disjoint and closed sets. This contradicts the connectedness of $\mR$.
\end{proof}


\subsection{Boundary terms for the wave operator} \label{subsection_boundary_terms_wave}

We determine the boundary terms in Theorem \ref{thm:carleman} for the wave operator $\Box$.
Here the independent variables are $(x,t) \in \R^n \times \R$,  $\Box = \pa_t^2 - \Delta_x$ and the Carleman weight
function is $\phi(x,t)$. So the principal symbol of $\Box$ is
\[
p(\xi,\tau)  = -\tau^2 + \xi \cdot \xi.
\]


\noindent
{\bf Expressions for $A$, $B$.}

\vspace{10pt}

\noindent Now, if $ \zeta = (\xi, \tau) + i \sigma (\phi_x, \phi_t)$ then
\begin{align*}
p(\zeta) & = - (\tau + i \sigma \phi_t)^2 + (\xi+ i \sigma \phi_x) \cdot ( \xi+ i \sigma \phi_x)
\\
& = ( |\xi|^2 - \tau^2) - \sigma^2( |\phi_{x}|^2 - \phi_t^2) + 2 i \sigma ( \xi \cdot \phi_x - \tau \phi_t),
\end{align*}
hence
\[
A(x, t, \xi, \tau, \sigma) =  ( |\xi|^2 - \tau^2) - \sigma^2( |\phi_{x}|^2 - \phi_t^2) ,
\qquad
B(x,t,\xi, \tau) = 2 ( \xi \cdot \phi_x - \tau \phi_t).
\]

\vspace{5pt}


\noindent
{\bf Expressions for the boundary terms $E^j$ for $\Box$.}

\vspace{10pt}

\noindent For $j=1, \cdots, n$, we have
\begin{align*}
\frac{1}{2} E^j &= \frac{1}{2} \left ( A(x, t, \pa v, \sigma v) \frac{ \pa B}{\pa \xi_j}(x,t) - 
\frac{\pa A}{ \pa \xi_j}(x,t,\pa v, \sigma v) ( B(x,t, \pa v) + g(x,t) v)  \right )
\\
& = \phi_j ( |v_x|^2 - v_t^2) -  \sigma^2 \phi_j  (|\phi_x|^2 - \phi_t^2) v^2
- 2 v_j (v_x \cdot \phi_x - v_t \phi_t) -  g(x,t) v_j v
\end{align*}
and (index $0$ corresponds to $t$)
\begin{align*}
\frac{1}{2} E^0 & = 
\frac{1}{2} \left ( A(x, t, \pa v, \sigma v) \frac{ \pa B}{\pa \tau} - 
\frac{\pa A}{ \pa \tau}(x,t,\pa v, \sigma v) ( B(x,t,\pa v) + g(x,t) v)  \right )
\\
& =  - \phi_t ( |v_x|^2 - v_t^2) +  \sigma^2 \phi_t  (|\phi_x|^2 - \phi_t^2) v^2
+ 2 v_t (v_x \cdot \phi_x - v_t \phi_t) +  g(x,t) v_t v.
\end{align*}

\vspace{5pt}


\noindent
{\bf The boundary integrands on $\{ t=z \}$ when $\Omega = (B \times \R) \cap \{ t >z \}$.}

\vspace{10pt}

\noindent Here $x=(y,z)$ with $y \in \R^{n-1}$ and $\Omega = (B \times \R) \cap \{ t >z \}$ where
$B$ is the unit ball in $\R^n$. We compute the boundary integrand coming from $t=z$.
The outward normal to the part of $\pa \Omega$ on $t=z$ is $\sqrt{2} \nu = (\nu^y = 0, \nu^z = 1, \nu^t = -1)$.
Hence
\begin{align*}
\frac{1}{\sqrt{2}} \nu^j E^j  & = (\phi_z + \phi_t)(|v_x|^2 - v_t^2) 
- \sigma^2 (\phi_z + \phi_t) ( |\phi_x|^2 - \phi_t^2) v^2
\\
& \qquad - 2(v_z + v_t)(v_x \cdot \phi_x - v_t \phi_t)  - (v_z + v_t) g(x) v
\\
& =  (v_z + v_t) \left ( (\phi_z + \phi_t)(v_z - v_t) - 2 (v_z \phi_z - v_t \phi_t) \right )
+ (\phi_z + \phi_t)|v_y|^2  - 2(v_z + v_t)(v_y \cdot \phi_y)
\\
& \qquad - \sigma^2 (\phi_z + \phi_t) ( | \phi_x|^2 - \phi_t^2) v^2 - (v_z + v_t) g(x) v
\\
& =    (v_z + v_t) (- v_z \phi_z + v_t \phi_t + \phi_t v_z - \phi_z v_t) + (\phi_z + \phi_t)|v_y|^2
- 2(v_z + v_t)(v_y \cdot \phi_y)
\\
& \qquad - \sigma^2 (\phi_z + \phi_t) ( |\phi_x|^2 - \phi_t^2) v^2 - (v_z + v_t) g(x) v
\\
& =   (\phi_t - \phi_z) (v_z + v_t)^2 + (\phi_z + \phi_t)|v_y|^2 
- 2(v_z + v_t)(v_y \cdot \phi_y)
\\
& \qquad - \sigma^2 (\phi_z + \phi_t) ( |\phi_x|^2 - \phi_t^2) v^2 - (v_z + v_t) g(x) v.
\end{align*}
We adopt the notations 
\[
Z v := \frac{1}{\sqrt{2}}(v_z + v_t), \qquad N v := \frac{1}{\sqrt{2}}(v_t - v_z),
 \]
so that $Z$ is tangential and $N$ is normal to $t=z$. Thus the integrand in the boundary term over $t=z$ is given by 
\begin{align}
\nu^j E^j  &= 4 (N \phi) (Z v)^2 + 2 (Z \phi) |v_y|^2 - 4 (Z v)( v_y \cdot \phi_y)  \label{carleman_boundary_terms_gamma} \\
 & \qquad- 2 \sigma^2 (Z \phi) ( - 2 Z \phi \, N \phi + |\phi_y|^2) v^2 - 2 (Z v) g(x,t) v \notag \\
 &= 4 (N\phi) ( (Zv)^2 + \sigma^2 (Z\phi)^2 v^2) + 2 (Z\phi) ( \abs{v_y}^2 - \sigma^2 \abs{\phi_y}^2 v^2) \notag \\
 & \qquad  - 4 (Zv) (v_y \cdot \phi_y) - 2(Zv)gv. \notag
\end{align}

\bibliographystyle{alpha}

\end{document}